\DeclareSymbolFont{matha}{OML}{txmi}{m}{it}
\DeclareMathSymbol{\varv}{\mathord}{matha}{118}
\newtheorem{theorem}{Theorem}
\theoremstyle{plain}
\newtheorem{corollary}{Corollary}
\newtheorem{lemma}{Lemma}
\newtheorem{proposition}{Proposition}
\newtheorem{remark}{Remark}
\numberwithin{equation}{section}
\DeclareSymbolFont{matha}{OML}{txmi}{m}{it}
\DeclareMathSymbol{\varv}{\mathord}{matha}{118}
\begin{document}
\title[On the collapse of the local Rayleigh condition]{On the collapse of the local Rayleigh condition for the Hydrostatic Euler equations and the finite time blow-up for the Semi-Lagrangian equations}
\author{Victor Ca\~nulef-Aguilar}

\date{October 29, 2022}

\begin{abstract}
In this paper we study the propagation of the local Rayleigh condition for the two-dimensional hydrostatic Euler equation in the framework of the local well-posedness result by Masmoudi and Wong \cite{MaTKW12}. We show under certain assumptions that such solutions will develop singularities or collapse the local Rayleigh condition. In addition, we find necessary conditions for the global solvability. Finally, we establish the finite time blow-up of solutions to the semi-lagrangian equations introduced by Brenier in \cite{Bre99} for certain class of initial data.
\end{abstract}

\maketitle

\maketitle

\section{Introduction}

We consider the two-dimensional hydrostatic Euler equation in a periodic channel $\Omega=\mathbb{T}\times (0,1)$:

\begin{align} \label{HEE}
\left\{ \begin{aligned}
 u_{t}+uu_{x}+vu_{y}+P_{x}&=0,& &\\
u_{x}+v_{y}&=0,& & \\
P_{y}&=0,& & \\
v\rvert_{y=0,1}&=0,& & \\
u\rvert_{t=0}&=u_{0},& &
\end{aligned} \right. 
\end {align}
where $(u,v)$ is the velocity field, $P$ is the scalar pressure and $u$ satisfies the local Rayleigh condition (i.e. $\partial_y^2 u>0$). Equations (\ref{HEE}) are derived from the two-dimensional incompressible Euler equation by means of the hydrostatic approximation. The mathematical justification of the formal limit has been established in several articles under the local Rayleigh condition (see for instance \cite{Bre03}, \cite{Gre99} and \cite{MaTKW12}). On the other hand, if the initial condition does not satisfies the local Rayleigh condition, the convergence may not hold, as shown in \cite{Gre00}, \cite{Gre99}. Furthermore, the ill-posedness of the linearization of (\ref{HEE}) around certain shear flows was proved in \cite{Re09}, as well as the ill-posedness of (\ref{HEE}) around certain shear flows, which was proved in \cite{H-KN16}.

The local existence of solutions to (\ref{HEE}) has been proven in \cite{KTVZ10} and \cite{KTVZ11} in the analytic setting. Nevertheless, the finite time blow-up of solutions to (\ref{HEE}) was established in \cite{TKW12} (under the assumption $u_0(x_0, y,t)\equiv C$, for $ y\in [0,1]$, which is incompatible with the local Rayleigh condition) and \cite{CINT12} (under the assumption of parity, which is again incompatible with the local Rayleigh condition. See also \cite{EE97}, where the finite time blow-up of certain solutions to the unsteady Prandtl equations is proved by similar methods). 

Equivalently, we may consider the vorticity formulation of the hydrostatic Euler equations (\ref{HEE})

\begin{align} \label{VHEE}
\left\{ \begin{aligned}
 \omega_{t}+u\omega_{x}+v\omega_{y}&=0,& &\\
(u,v) &=\nabla^{\perp}\mathcal{A}\omega,& & \\
\omega\rvert_{t=0}&=\omega_{0},& & \\
\end{aligned} \right. 
\end {align}
where the vorticity $\omega$ is defined by

\begin{equation}
\omega=u_y.
\end{equation}
The local well-posedness of (\ref{VHEE}) was proved in \cite{MaTKW12} in Sobolev spaces $H^s$, with $s\geq 4$, for initial vorticity profiles that satisfy the local Rayleigh condition (i.e. $\partial_y\omega_0 >0$). In this article we address the propagation in time of the local Rayleigh condition; we get lower bounds for certain functionals that quantify in some sense the validity of the local Rayleigh condition (see (\ref{collapselogE1}) and (\ref{collapselogE2})). Under certain assumptions, we will prove that the above functionals cannot remain bounded, which implies the collapse of the local Rayleigh condition or the formation of singularities (see Theorem \ref{collapse}, which is proved in Section \ref{proofcollapse}). One of the main achievements of this work, is the derivation of certain identities which are satisfied by every solution to (\ref{VHEE}), as long as the solution exists (see Proposition \ref{formulas} in Section \ref{monoticitysection}). In addition, we derive some necessary conditions for the global solvability in the framework of the local well-posedness result by Masmoudi and Wong \cite{MaTKW12} (see Theorem \ref{properties} and Section \ref{proofproperties}). Despite Theorem \ref{collapse} does not guarantees that the solution remains in $H^1(\mathbb{T}\times (0,1))$ as long as the local Rayleigh condition holds, we can control the $H^1$ norm and the validity of the local Rayleigh condition under additional assumptions, which is presented in Section \ref{propagation}.

The second main result of this article deals with the finite time blow-up of smooth solutions to the semilagrangian equations, which are derived from the hydrostatic Euler equations (\ref{VHEE}) under certain assumptions. More precisely, if $\omega_0$ satisfies the local Rayleigh condition and 

\begin{align}
\omega_0(x,0)&\equiv k,\label{constantvorticity1}\\
\omega_0(x,1)&\equiv k+1,\label{constantvorticity2}
\end{align}
for certain constant $k$, then problem (\ref{VHEE}) is equivalent to the following problem in $\Omega=\mathbb{T}\times (0,1)$:

\begin{align}\label{SLE1D}
\left\{ \begin{aligned}
 \varv_{t}+\partial_x\left(\frac{\varv^2}{2}+P\right)&=0,& &\\
\partial_t h_a+\partial_x\left(\varv h_a\right)&=0,& &\\
\int_{0}^{1}h_ada&=1,& &\\
\partial_a P&=0,& & \\
(\nu,h_a)\rvert_{t=0}&=(\nu_0,\partial_a h_0),& & \\
\end{aligned} \right. 
\end{align}
where $\varv$ and $h_a$ are defined by
\begin{align}
  \omega(x,h(x,t,a),t)&\equiv k+a,\text{\hspace{0.1cm} for $a\in[0,1]$,}\label{SLE1Ddefh}\\
  \varv(x,t,a)&=u(x,h(x,t,a),t),\text{\hspace{0.1cm} for $a\in[0,1]$,}\label{SLE1Ddefv}
  \end{align}
in this case we assume that $\partial_a h_0$ is positive, which is equivalent to the validity of the local Rayleigh condition. Equations (\ref{SLE1D}) are known as the semilagrangian equations, which were introduced by Brenier in \cite{Bre99}. In the same article, the local existence of solutions to (\ref{VHEE}) was proved in the class $C$ (which is defined as the set of $C^1$-functions in $\mathbb{T}\times (0,1)$ that satisfy the local Rayleigh condition, (\ref{constantvorticity1}) and (\ref{constantvorticity2})). The semilagrangian equations (\ref{SLE1D}) have a natural extension to higher dimensions, namely

\begin{align} \label{HSLE}
\left\{ \begin{aligned}
 \partial_t \vb{v}+\vb{v}\cdot \nabla \vb{v}+\nabla P&=0,& & \\
\partial_t h_a+\nabla \cdot \left(\vb{v}h_a\right)&=0,& & \\
\int_{0}^{1}h_ada&=1,& & \\
\partial_a P&=0,& & \\
\partial_i\varv_j-\partial_j\varv_i&=0,& &\text{for $ i,j\leq d$,} \\
(\vb{v},h_a)\rvert_{t=0}&=(\vb{v}_{0},\partial_ah_0),& &
\end{aligned} \right. 
\end {align}
where $\vb{v}: \mathbb{T}^d\times [0,1]\rightarrow \mathbb{R}^d$ is the semilagrangian velocity, $P$ is the scalar pressure and $h_a$ is a positive function in  $\mathbb{T}^d\times [0,1]$. Let us point out that the semilagrangian equations (\ref{SLE1D}) establish a vertical change of coordinates by means of the level curves of the vorticity $\omega$, where the injectivity follows by the local Rayleigh condition. Hence, the extension of the semilagrangian equations to higher dimensions have no clear physical meaning.

A necessary condition for the global solvability of (\ref{HSLE}) was obtained in \cite{Bre99}, which is given by:

\begin{equation}\label{BCC}
\int_{0}^{1}\left\lvert\int_{\mathbb{T}^d}\vb{v}(x',t,a)dx'\right\rvert^2\int_{\mathbb{T}}h_a(x,t,a)dxda=\int_{\mathbb{T}\times (0,1)}\lvert \vb{v}\rvert^2(x,t,a)h_a(x,t,a)dxda,
\end{equation}
where both sides are time independent. Theorem \ref{blowup} establishes the finite time blow-up of solutions to the semilagrangian equations (\ref{HSLE}) for certain class of initial data, which is proved in Section \ref{proofblowup}. In the following subsections, we will state the main results of this article.

\subsection{Collapse of the local Rayleigh condition or singularity formation}

\begin{theorem}\label{collapse}

Assume that the initial vorticity $\omega_0\in H^4(\mathbb{T}\times(0,1)) $ satisfies the local Rayleigh condition (i.e. $\partial_y\omega_0>0$) and $\partial_x\omega_0$ is not identically $0$. Let $\omega\in C([0,T];H^4(\mathbb{T}\times(0,1)))$ be the solution to the problem (\ref{VHEE}) that satisfies the local Rayleigh condition in $[0,T]$. Set

\begin{equation}\label{defE1}
E_1(t)=\int_{\mathbb{T}\times(0,1)}\frac{\omega\omega_x}{\omega_y}dxdy = \int_{\mathbb{T}\times(0,1)}\left(\frac{\omega\omega_x}{\omega_y}-u_x\right)dxdy
\end{equation} and

\begin{equation}\label{defE2}
E_2(t)=\int_{\mathbb{T}\times(0,1)}u^2\frac{\omega\omega_x}{\omega_y}dxdy = \int_{\mathbb{T}\times(0,1)}u^2\left(\frac{\omega\omega_x}{\omega_y}-u_x\right)-uP_xdxdy.
\end{equation}

Assume that $E_1(0)\geq 0$ or $E_2(0)\geq 0$. Then there exists a positive time $T^{*}$ such that at least one of the following properties holds:

\begin{itemize}

\item $\displaystyle \limsup_{t\rightarrow T^{*}}\Vert \omega (t)\Vert_{H^4(\mathbb{T}\times (0,1))}=\infty,$

\item $\displaystyle\limsup_{t\rightarrow T^{*}}\left\Vert \frac{1}{\omega_y (t)}\right\Vert_{L^{\infty}(\mathbb{T}\times (0,1))}=\infty.  $

\end{itemize}
Furthermore, as long as $\Vert \omega (t)\Vert_{H^4(\mathbb{T}\times (0,1))}$ and $\left\Vert\frac{1}{\omega_y(t)}\right\Vert_{L^{\infty}(\mathbb{T}\times (0,1))}$ remain bounded, we have the following estimates: 
   
\begin{equation}\label{collapselogE1}
\log \left(\frac{1}{E_1(0)}\frac{1}{\frac{1}{E_1(0)}-t}\right)\leq \int_{\mathbb{T}\times (0,1)}\log\left(\frac{\partial_y\omega_0}{\omega_y(t)}\right)dxdy,\text{\hspace{1.7cm}if $E_1(0)>0,$}\end{equation}

\begin{equation}\label{collapselogE2}
\log \left(\frac{\Vert u\Vert_2^2}{E_2(0)}\frac{1}{\frac{\Vert u\Vert_2^2}{E_2(0)}-t}\right)\leq tC_1(\omega_0)+ C_2(\omega_0)\int_{\mathbb{T}\times (0,1)}\log\left(\frac{\partial_y\omega_0}{\omega_y(t)}\right)dxdy,\text{\hspace{0.1cm}if $E_2(0)>0,$}\end{equation}

\begin{equation}\label{collapseE1}
\frac{1}{\frac{1}{E_1(0)}-t}\leq E_1(t)\leq E_1(0)+C_3(\omega_0)\int_{0}^{t}\int_{\mathbb{T}\times (0,1)}\omega_x^2\left(1+\frac{1}{\omega_y^2}\right)dxdyd\tau, \text{\hspace{0.1cm}if $E_1(0)>0,$} 
\end{equation}

\begin{equation}\label{collapseE2}
\frac{\Vert u\Vert_{2}^{2}}{\frac{\Vert u\Vert_{2}^{2}}{E_2(0)}-t}\leq E_2(t)\leq E_2(0)+C_4(\omega_0)\int_{0}^{t}\int_{\mathbb{T}\times (0,1)}\omega_x^2\left(1+\frac{1}{\omega_y^2}\right)dxdyd\tau, \text{\hspace{0.05cm}if $E_2(0)>0,$} 
\end{equation}
where $C_1(\omega_0)=\left(\frac{3}{2}\frac{\Vert \omega_0\Vert_{\infty}}{\Vert u\Vert_2}\right)^4\lvert E_1(0) \rvert+\frac{E_2(0)}{\Vert u\Vert_2^2}$, $C_2(\omega_0)=\left(\frac{3}{2}\frac{\Vert \omega_0\Vert_{\infty}}{\Vert u\Vert_2}\right)^4,$ $C_3(\omega_0)=2\Vert\omega_0\Vert_{\infty}^2+\frac{2}{\pi^2}$ and $C_4(\omega_0)=2\left(\frac{3}{\pi}\Vert \omega_0\Vert_{\infty}\right)^2+2\left(\frac{3}{2}\Vert \omega_0\Vert_{\infty}\right)^2C_3(\omega_0)$.

\end{theorem}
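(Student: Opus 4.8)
The plan is to derive an evolution inequality of Riccati type for $E_1$ (respectively $E_2$) that forces blow-up of the quantities appearing in the two alternatives. The starting point is Proposition \ref{formulas}: it provides exact identities satisfied by any $H^4$ solution of (\ref{VHEE}) as long as it exists and the local Rayleigh condition holds. I expect these identities to express $\frac{d}{dt}E_1$ and $\frac{d}{dt}E_2$ in terms of integrals of the same integrand $\omega\omega_x/\omega_y$ weighted against $u$-dependent factors, plus a nonnegative ``remainder'' controlled by $\int \omega_x^2(1+\omega_y^{-2})$. The key algebraic step is to recognize the main term as bounded below by a constant times $E_i^2$ divided by a normalization ($1$ for $E_1$, $\|u\|_2^2$ for $E_2$, the latter being conserved since (\ref{VHEE}) conserves $\int u^2$), i.e. a differential inequality of the form $\dot E_i \ge E_i^2/N_i$; this is where the hypothesis $E_i(0)\ge 0$ matters, since one must first rule out that $E_i$ ever becomes negative (if $E_i(0)=0$ the assumption $\partial_x\omega_0\not\equiv 0$ should force $E_i$ to become strictly positive immediately, reducing to the case $E_i(0)>0$).

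Once the Riccati inequality $\dot E_i\ge E_i^2/N_i$ is in hand with $E_i(0)>0$, integration gives $E_i(t)\ge N_i/(N_i/E_i(0)-t)$, which is exactly the lower bounds in (\ref{collapseE1}) and (\ref{collapseE2}) and which blows up at or before $t=N_i/E_i(0)$; setting $T^*$ to this finite time, the solution cannot remain smooth with $\omega_y$ bounded below past $T^*$, giving the dichotomy between the two bullet points (via the local well-posedness / continuation criterion of \cite{MaTKW12}, one of $\|\omega\|_{H^4}$ or $\|1/\omega_y\|_\infty$ must diverge). For the logarithmic estimates (\ref{collapselogE1}) and (\ref{collapselogE2}), I would instead integrate the identity for $\frac{d}{dt}\log E_i$: dividing $\dot E_i\ge E_i^2/N_i$ by $E_i$ gives $\frac{d}{dt}\log E_i\ge E_i/N_i$, and then one needs an \emph{upper} bound on $E_i(t)$ in terms of $\int_{\mathbb{T}\times(0,1)}\log(\partial_y\omega_0/\omega_y(t))$. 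This last bound should come from tracking the evolution of $\log\omega_y$ along particle trajectories: since $\omega$ is transported and the flow is measure-preserving (as $u_x+v_y=0$), $\int \log(1/\omega_y)$ evolves by a controlled quantity, and $E_i$ can be compared to $\frac{d}{dt}\int\log(1/\omega_y)$ up to the constants $C_1,C_2$ (and the crude bounds $|u|\le \tfrac32\|\omega_0\|_\infty$, $\|\omega(t)\|_\infty=\|\omega_0\|_\infty$, Poincaré with constant $1/\pi$ on the channel) that appear in the statement.

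The main obstacle will be the second, ``upper bound'' half: getting (\ref{collapseE1})--(\ref{collapseE2}) requires showing $\frac{d}{dt}E_i$ is controlled \emph{above} by $C(\omega_0)\int\omega_x^2(1+\omega_y^{-2})$, and then feeding this into the logarithmic version requires relating $\int\omega_x^2(1+\omega_y^{-2})$ back to $-\frac{d}{dt}\int\log\omega_y\,dxdy$. This is the step where the precise structure of the identities in Proposition \ref{formulas} is indispensable — one must see that the ``bad'' term in $\dot E_i$ is, up to the stated multiplicative constants, exactly (a piece of) the time derivative of the entropy-like functional $\int\log(1/\omega_y)$, so that the time integral telescopes into the right-hand sides of (\ref{collapselogE1})--(\ref{collapselogE2}). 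I would carry out the bookkeeping first for $E_1$, where there is no pressure term and no $u^2$ weight, establish the clean chain $\dot E_1\ge E_1^2$, $E_1(0)>0\Rightarrow$ finite-time divergence, $\frac{d}{dt}\log E_1\ge E_1$, $E_1\le E_1(0)+C_3\int_0^t(\cdots)$; then repeat for $E_2$ carrying the extra $u^2$ and $-uP_x$ terms, using $\|u\|_2^2=\mathrm{const}$ and the elementary pressure bound $\|P_x\|\lesssim\|\omega_0\|_\infty^2$ to produce the extra additive contribution $tC_1(\omega_0)$ in (\ref{collapselogE2}) and the constants $C_1,C_2,C_4$.
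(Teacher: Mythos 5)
Your skeleton for the blow-up and the dichotomy coincides with the paper's: the identities of Proposition \ref{formulas} plus Cauchy--Schwarz give $\partial_t E_1\geq E_1^2$ and $\Vert u\Vert_2^2\,\partial_t E_2\geq E_2^2$ (with $\Vert u\Vert_2^2$ conserved), the case $E_i(0)=0$ is reduced to $E_i(0)>0$ exactly as you say (via the rigidity Lemmas \ref{stationary1}--\ref{stationary2} and $\partial_x\omega_0\not\equiv 0$), and the contradiction uses the upper bounds $\partial_t E_1\leq C_3\int\omega_x^2(1+\omega_y^{-2})$ and $\partial_t E_2\leq C_4\int\omega_x^2(1+\omega_y^{-2})$ (Poincar\'e in $y$, $\Vert u\Vert_\infty\leq\frac32\Vert\omega_0\Vert_\infty$, and $\int_{\mathbb{T}}P_x^2\,dx\leq(\frac{3}{\pi}\Vert\omega_0\Vert_\infty)^2\int\omega_x^2$ --- note the pressure is controlled by $\Vert\omega_0\Vert_\infty\Vert\omega_x\Vert_2$, not by $\Vert\omega_0\Vert_\infty^2$ alone), which indeed delivers \eqref{collapseE1}--\eqref{collapseE2} as in the paper.

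The genuine gap is in your route to \eqref{collapselogE1}--\eqref{collapselogE2}. Dividing the Riccati inequality by $E_1$ gives $\frac{d}{dt}\log E_1\geq E_1$, i.e.\ after integration $\log(E_1(t)/E_1(0))\geq\int_{\mathbb{T}\times(0,1)}\log(\partial_y\omega_0/\omega_y(t))\,dxdy$, which bounds the entropy from \emph{above} by a quantity involving $E_1(t)$ --- the opposite direction of \eqref{collapselogE1} --- and the comparison you then say you need, namely controlling $\int\omega_x^2(1+\omega_y^{-2})$ by $-\frac{d}{dt}\int\log\omega_y\,dxdy$, is false: by \eqref{formulas1} the latter equals $E_1=\int\frac{\omega\omega_x}{\omega_y}$, which is linear in $\omega_x$ and can vanish or be negative while the quadratic quantity is strictly positive. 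The missing observation is that \eqref{formulas1} is an \emph{exact} identity, $E_1(t)=\partial_t\int_{\mathbb{T}\times(0,1)}\log(1/\omega_y)\,dxdy$, so \eqref{collapselogE1} follows simply by integrating the pointwise lower bound $E_1(\tau)\geq(1/E_1(0)-\tau)^{-1}$ over $[0,t]$ and telescoping the right-hand side into $\int\log(\partial_y\omega_0/\omega_y(t))\,dxdy$; no quadratic dissipation enters at all. For \eqref{collapselogE2} one squares the lower bound for $E_2$, uses H\"older in the form $E_2^2\leq\Vert u\Vert_4^4\,\partial_t E_1\leq(\frac32\Vert\omega_0\Vert_\infty)^4\,\partial_t E_1$, integrates once in time to get $\bigl(\Vert u\Vert_2^2/E_2(0)-t\bigr)^{-1}\leq C_1(\omega_0)+C_2(\omega_0)E_1(t)$ (this is also where the term $tC_1$ originates, from $|E_1(0)|$ and $E_2(0)/\Vert u\Vert_2^2$, not from the pressure), and then integrates once more using the same entropy identity for $\int_0^t E_1\,d\tau$. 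Without this two-step integration anchored on the exact identity \eqref{formulas1}, your plan does not produce the stated estimates with the constants $C_1,C_2$.
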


\begin{remark}

If $\omega_0\in H^4(\mathbb{T}\times (0,1))$ satisfies the local Rayleigh condition and $\partial_x\omega_0\equiv 0$, then the solution to (\ref{VHEE}) is stationary. Conversely, the stationary solutions in $H^4(\mathbb{T}\times(0,1))$ that satisfy the local Rayleigh condition are $x$-independent (see Corollary \ref{stationary}).

\end{remark}

\begin{remark}
Suppose that $\omega_0\in H^4(\mathbb{T}\times (0,1))$ is even in $x$, satisfies the local Rayleigh condition and  $\partial_x \omega_0$ is not identically $0$, then $E_1(0)=0$, which implies the collapse in finite time (more precisely, the local Rayleigh condition will collapse or $\omega$ will develop singularities) of the solution to (\ref{VHEE}). Moreover, since $\omega(x,y,t)$ and $\omega(-x,y,-t)$ solve (\ref{VHEE}) with the same initial condition, $\omega(x,y,t)\equiv \omega(-x,y,-t)$ (thanks to the uniqueness). In that case, the solution will collapse going forward and backward in time.

\end{remark}

\begin{remark}
It is worth mentioning that the set of initial conditions that satisfy the assumptions of Theorem \ref{collapse} is nonempty. Indeed, if we choose $\omega_0(x,y)=2y-sin(2\pi x-y)$, then $\partial_x\omega_0=-2\pi cos(2\pi x-y)$ and $\partial_y\omega_0=2+cos(2\pi x-y)$, which yields

\begin{align}
&E_1(0)\nonumber\\
&=\int_{\mathbb{T}\times (0,1)}\frac{(2y-sin(2\pi x-y))\cdot-2\pi cos(2\pi x-y)}{2+cos(2\pi x-y)}dxdy\nonumber\\
&=-2\pi\int_{\mathbb{T}\times (0,1)}\left(2y-sin(2\pi x-y)\right)dxdy+4\pi\int_{\mathbb{T}\times (0,1)}\frac{(2y-sin(2\pi x-y))}{2+cos(2\pi x-y)}dxdy\nonumber\\
&=-2\pi+\int_{\mathbb{T}\times (0,1)}\frac{4\pi\cdot 2y}{2+cos(2\pi x-y)}dxdy\nonumber\\
&=-2\pi+\frac{4\pi}{\sqrt{3}}\int_{0}^{1}2y dy\nonumber\\
&=2\pi\left(\frac{2}{\sqrt{3}}-1\right)\nonumber\\
&>0.\nonumber
\end{align}

\end{remark}

\begin{remark}
Suppose that $\omega_0(x,y)\in H^4(\mathbb{T}\times (0,1))$ satisfies the local Rayleigh condition, $\partial_x\omega_0$ is not identically $0$ but $E_1(0)<0$, then $\tilde{\omega}_0(x,y)=\omega_0(-x,y)$ satisfies $E_1(0)>0$ (the same applies for $E_2$). Hence, the assumptions of Theorem \ref{collapse} are not so restrictive.

\end{remark}

\subsection{Necessary conditions for global solvability}

\begin{theorem}\label{properties}

Assume that the initial vorticity $\omega_0\in H^4(\mathbb{T}\times(0,1)) $ satisfies the local Rayleigh condition (i.e. $\partial_y\omega_0>0$). Let $\omega\in C([0,T];H^4(\mathbb{T}\times(0,1)))$ be the solution to the problem (\ref{VHEE}) that satisfies the local Rayleigh condition in $[0,T]$. Let $E_1$ and $E_2$ be defined by (\ref{defE1}) and (\ref{defE2}) respectively. Assume further that $T$ can be chosen arbitrarily large, then we have:

\begin{align}
&\int_{0}^{\infty}\int_{\mathbb{T}\times(0,1)}\left\lvert P_x+u\left(u_x-\frac{\omega\omega_x}{\omega_y}\right)\right\rvert^2dxdydt=-E_2(0),\label{properties1} \\
&\int_{0}^{\infty}\int_{\mathbb{T}\times(0,1)}\left\lvert u_x-\frac{\omega\omega_x}{\omega_y} \right\rvert^2dxdydt=-E_1(0),\label{properties2} \\
&\int_{0}^{\infty}\int_{\mathbb{T}^1}P_x^2dxdt\leq -2E_2(0)-\frac{9}{2}\Vert \omega_0\Vert_{\infty}^2E_1(0),\label{properties3} \\
&\int_{0}^{\infty}\lvert E_1(t)\rvert dt\leq \int_{\mathbb{T}\times (0,1)}\log\left(\frac{2\Vert\omega_0\Vert_{\infty}}{\partial_y\omega_0}\right)dxdy,\label{properties4} 
\end{align}

where $E_1(0),E_2(0)<0$.
\end{theorem}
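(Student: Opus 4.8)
The plan is to read off from Proposition~\ref{formulas} that $E_1$ and $E_2$ are monotone along the flow, use Theorem~\ref{collapse} to force both to stay nonpositive whenever the solution is globally defined, and then recover the values of the improper time integrals by a soft compactness argument. The identities I will invoke are $\frac{d}{dt}E_1(t)=\int_{\mathbb{T}\times(0,1)}|u_x-\frac{\omega\omega_x}{\omega_y}|^2\,dxdy$ and $\frac{d}{dt}E_2(t)=\int_{\mathbb{T}\times(0,1)}|P_x+u(u_x-\frac{\omega\omega_x}{\omega_y})|^2\,dxdy$, so that $E_1,E_2$ are nondecreasing, together with $\frac{d}{dt}\int_{\mathbb{T}\times(0,1)}\log\omega_y\,dxdy=-E_1(t)$, which follows from $\frac{D}{Dt}\log\omega_y=u_x-\frac{\omega\omega_x}{\omega_y}$, the divergence-free condition $u_x+v_y=0$, the boundary condition $v|_{y=0,1}=0$, and $\int_{\mathbb{T}}u_x\,dx=0$. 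I will also use that transport preserves $\|\omega(t)\|_{L^\infty}=\|\omega_0\|_{L^\infty}$, that $\int_0^1 u(x,y,t)\,dy$ is independent of $x$ (its $x$-derivative is $-\int_0^1 v_y\,dy=0$), and the uniform bound $\|u(t)\|_{L^\infty}\le\frac32\|\omega_0\|_{L^\infty}$; together with $\int_{\mathbb{T}}P_x\,dx=0$ and the $y$-independence of $P_x$, the first of these gives $\int_{\mathbb{T}\times(0,1)}uP_x\,dxdy=0$ for every $t$, hence $E_2(t)=\int_{\mathbb{T}\times(0,1)}u^2(\frac{\omega\omega_x}{\omega_y}-u_x)\,dxdy$.

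First I would discard the trivial case $\partial_x\omega_0\equiv 0$ (the solution is then stationary and every claimed relation reads $0=0$), so assume $\partial_x\omega_0\not\equiv 0$. Next I would prove $E_1(t)\le 0$ and $E_2(t)\le 0$ for all $t\ge 0$. If $E_1(t_0)>0$ at some $t_0$, then $\partial_x\omega(\cdot,\cdot,t_0)\not\equiv 0$ (otherwise the integrand defining $E_1$ vanishes), so restarting the flow at $t_0$ and applying Theorem~\ref{collapse} in the regime $E_1>0$ — via the lower bound in~(\ref{collapseE1}) — forces $\|\omega\|_{H^4}$ or $\|1/\omega_y\|_{L^\infty}$ to blow up no later than $t_0+1/E_1(t_0)$, which is incompatible with $T$ being arbitrarily large with $\omega\in C([0,T];H^4)$ satisfying the Rayleigh condition; the bound in~(\ref{collapseE2}) handles $E_2$ the same way. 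The boundary cases $E_1(0)=0$ or $E_2(0)=0$ are likewise excluded by Theorem~\ref{collapse}, so in fact $E_1(0),E_2(0)<0$.

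For~(\ref{properties2}): since $E_1$ is nondecreasing and bounded above by $0$, it converges to some $L_1\le 0$ and $\int_0^\infty\frac{d}{dt}E_1\,dt=L_1-E_1(0)<\infty$, i.e. $\int_0^\infty\!\!\int_{\mathbb{T}\times(0,1)}|u_x-\frac{\omega\omega_x}{\omega_y}|^2<\infty$; hence there is $t_n\to\infty$ with $\|(u_x-\frac{\omega\omega_x}{\omega_y})(t_n)\|_{L^2}\to 0$, and since the domain has measure $1$, $|E_1(t_n)|=|\int(u_x-\frac{\omega\omega_x}{\omega_y})(t_n)|\le\|(u_x-\frac{\omega\omega_x}{\omega_y})(t_n)\|_{L^2}\to 0$, so $L_1=0$. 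For~(\ref{properties1}), $E_2$ is nondecreasing and $\le 0$, so it converges to some $L_2\le 0$; along the same $t_n$, $|E_2(t_n)|\le\|u(t_n)\|_{L^\infty}^2\|(\frac{\omega\omega_x}{\omega_y}-u_x)(t_n)\|_{L^1}\to 0$, so $L_2=0$ and $\int_0^\infty\frac{d}{dt}E_2\,dt=-E_2(0)$. Then~(\ref{properties3}) follows by writing $P_x=[P_x+u(u_x-\frac{\omega\omega_x}{\omega_y})]-u(u_x-\frac{\omega\omega_x}{\omega_y})$, using $(a+b)^2\le 2a^2+2b^2$ and $u^2\le\frac94\|\omega_0\|_{L^\infty}^2$, integrating over $(\mathbb{T}\times(0,1))\times(0,\infty)$, invoking~(\ref{properties1})--(\ref{properties2}), and noting $\int_{\mathbb{T}^1}P_x^2\,dx=\int_{\mathbb{T}\times(0,1)}P_x^2\,dxdy$. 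For~(\ref{properties4}), integrating $\frac{d}{dt}\int_{\mathbb{T}\times(0,1)}\log\omega_y\,dxdy=-E_1(t)=|E_1(t)|$ from $0$ to $t$ gives $\int_0^t|E_1|\,d\tau=\int_{\mathbb{T}\times(0,1)}\log\frac{\omega_y(t)}{\partial_y\omega_0}\,dxdy$; for each fixed $x$, Jensen's inequality and $\|\omega(t)\|_{L^\infty}=\|\omega_0\|_{L^\infty}$ give $\int_0^1\log\omega_y\,dy\le\log\int_0^1\omega_y\,dy=\log(\omega(x,1,t)-\omega(x,0,t))\le\log(2\|\omega_0\|_{L^\infty})$, so $\int_0^t|E_1|\,d\tau\le\int_{\mathbb{T}\times(0,1)}\log\frac{2\|\omega_0\|_{L^\infty}}{\partial_y\omega_0}\,dxdy$ for all $t$, and letting $t\to\infty$ finishes.

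The hard part will be the step $E_1(t),E_2(t)\le 0$: it needs a careful restart of the solution at a generic time and a check that the hypotheses of Theorem~\ref{collapse} hold there — in particular that $\partial_x\omega\not\equiv 0$, which is automatic once $E_1$ or $E_2$ is positive — and that the finite-time blow-up it produces truly contradicts arbitrarily long existence in $C([0,T];H^4)$ under the Rayleigh condition. Apart from this, the only non-soft inputs are the identities of Proposition~\ref{formulas} and the uniform bound $\|u(t)\|_{L^\infty}\le\frac32\|\omega_0\|_{L^\infty}$, both from earlier sections; the rest is monotonicity plus passing to a subsequence.
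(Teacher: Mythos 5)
Your proposal is correct, and its overall skeleton is the same as the paper's: Theorem \ref{collapse} forces $E_1,E_2$ to stay negative under the arbitrarily-large-$T$ hypothesis, Proposition \ref{formulas} gives monotonicity, the limits of $E_1,E_2$ at infinity are shown to vanish so that the time integrals of $\partial_t E_1,\partial_t E_2$ equal $-E_1(0),-E_2(0)$, the pressure bound (\ref{properties3}) comes from the decomposition $P_x=\bigl[P_x+u\bigl(u_x-\tfrac{\omega\omega_x}{\omega_y}\bigr)\bigr]-u\bigl(u_x-\tfrac{\omega\omega_x}{\omega_y}\bigr)$ with $\|u\|_\infty\le\tfrac32\|\omega_0\|_\infty$ (this is exactly Proposition \ref{pxl2}), and (\ref{properties4}) from (\ref{formulas1}) plus Jensen. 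The one genuinely different sub-step is how you get $E_1(t),E_2(t)\to 0$: you argue by softness, extracting a sequence $t_n\to\infty$ along which the dissipation $\|u_x-\tfrac{\omega\omega_x}{\omega_y}\|_{L^2}(t_n)\to 0$ and then using Cauchy--Schwarz (and $\|u\|_\infty\le\tfrac32\|\omega_0\|_\infty$, together with $\int uP_x\,dxdy=\int u^2u_x\,dxdy=0$) to force $E_1(t_n),E_2(t_n)\to 0$, whereas the paper reuses the Riccati-type inequalities $\partial_tE_1\ge E_1^2$ and $\|u\|_2^2\,\partial_tE_2\ge E_2^2$ (already established for Theorem \ref{collapse}) to conclude that a nonzero limit would make $\int_0^\infty E_i^2\,dt$ infinite while it is bounded by $-E_i(0)$. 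Both are valid; yours is more elementary and self-contained for that step, while the paper's gives the quantitative statement that $E_1^2$ and $E_2^2$ are integrable in time. Two further cosmetic differences: you apply Jensen fiberwise in $y$ rather than over the whole domain (equivalent), and you treat the degenerate case $\partial_x\omega_0\equiv0$ and the restart at a later time $t_0$ explicitly, which the paper leaves implicit (indeed the paper's strict claim $E_1(0),E_2(0)<0$ tacitly presupposes $\partial_x\omega_0\not\equiv0$, a point your write-up handles more carefully).
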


\begin{remark}
Let us observe that in the semilagrangian formulation (\ref{SLE1D}), the above energies are equal to (see Proposition \ref{summarySLE1D}):

\begin{align}
E_1(T)&=-\int_{\mathbb{T}\times (0,1)}\varv_x h_{a}dxda=E_1(0)+\int_{0}^{T}\int_{\mathbb{T}\times (0,1)}\varv_{x}^{2}h_{a} dxda dt,\nonumber\\
E_2(T)&=-\int_{\mathbb{T}\times (0,1)}\varv^2\varv_x h_{a}dxda=E_2(0)+\int_{0}^{T}\int_{\mathbb{T}\times (0,1)}\varv_{t}^{2}h_{a} dxda dt,\nonumber
\end{align}
thanks to the change of coordinates $y=h(x,t,a)$. The above identities are a particular case of (\ref{formulasSLE2}) and (\ref{formulasSLE3}), when $d=1$.

\end{remark}

\subsection{Finite time blow-up of the Semi-lagrangian equations}

\begin{theorem}\label{blowup}
Let $(\vb{v},h_a)$ be a smooth solution to (\ref{HSLE}). Set 

\begin{equation}\label{defE1HSLE}
E_1(t)=-\int_{\mathbb{T}^d\times (0,1)}\left(\nabla\cdot \vb{v} \right)h_adxda
\end{equation}
and

\begin{equation}\label{defE2HSLE}
E_2(t)=\int_{\mathbb{T}^d\times (0,1)}\left(\vb{v}\cdot\vb{v}_t \right)h_adxda.
\end{equation}

Assume that $E_1(0)>0$ or $E_2>0$, then the solution blows up in finite time.
Moreover, as long as the solution exists, we have the following estimates:

\begin{align}
d\log \left(\frac{d}{E_1(0)}\frac{1}{\frac{d}{E_1(0)}-t}\right)&\leq \int_{\mathbb{T}^d\times (0,1)}\log (h_a(\tau))h_a(\tau)dxda\rvert_{\tau=0}^{\tau=t},\text{\hspace{0.1cm}if $E_1(0)>0,$}\label{collapselogha} \\
\frac{d}{\frac{d}{E_1(0)}-t}\leq E_1(t)&=E_1(0)+\int_{0}^{t}\int_{\mathbb{T}^d\times (0,1)}\lvert\nabla\vb{v}\rvert^2h_adxdadt, \text{\hspace{0.1cm}if $E_1(0)>0,$} \label{collapseE1SLE}\\
\frac{\Vert \vb{v}\Vert^{2}}{\frac{\Vert \vb{v}\Vert^{2}}{E_2(0)}-t}\leq E_2(t)&=E_2(0)+\int_{0}^{t}\int_{\mathbb{T}^d\times (0,1)}\lvert\partial_t\vb{v}\rvert^2h_adxdadt, \text{\hspace{0.05cm}if $E_2(0)>0,$} \label{collapseE2SLE}
\end{align}

where $\Vert \vb{v}\Vert^2=\int_{\mathbb{T}^d\times (0,1)}\lvert \vb{v}\rvert^2h_adxda$.

\end{theorem}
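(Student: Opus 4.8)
\textbf{Proof proposal for Theorem \ref{blowup}.} The plan is to derive, directly from the semilagrangian system (\ref{HSLE}), the two differential identities
\[
\frac{d}{dt}E_1(t)=\int_{\mathbb{T}^d\times(0,1)}|\nabla\vb{v}|^2 h_a\,dxda,\qquad
\frac{d}{dt}E_2(t)=\int_{\mathbb{T}^d\times(0,1)}|\partial_t\vb{v}|^2 h_a\,dxda,
\]
which are the differentiated forms of (\ref{collapseE1SLE})--(\ref{collapseE2SLE}) and should be instances of Proposition \ref{formulas}/(\ref{formulasSLE2})--(\ref{formulasSLE3}) with general $d$. First I would compute $\frac{d}{dt}E_1$ by differentiating under the integral, using $\partial_t h_a=-\nabla\cdot(\vb{v}h_a)$ (the continuity equation) and $\partial_t(\nabla\cdot\vb{v})=\nabla\cdot\partial_t\vb{v}=-\nabla\cdot(\vb{v}\cdot\nabla\vb{v})-\Delta P$; then integrate by parts in $x$ over the torus (no boundary terms), and crucially use the curl-free condition $\partial_i\varv_j=\partial_j\varv_i$ to rewrite $\vb{v}\cdot\nabla\vb{v}=\nabla(|\vb{v}|^2/2)$ so that the pressure and the quadratic term combine; the transport structure of $h_a$ should convert the remaining terms into $\int|\nabla\vb{v}|^2 h_a$. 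Similarly for $E_2$, I would differentiate, substitute the momentum equation $\partial_t\vb{v}=-\vb{v}\cdot\nabla\vb{v}-\nabla P=-\nabla(|\vb{v}|^2/2+P)$, integrate by parts, and use that $\int_0^1 h_a\,da=1$ together with $\partial_a P=0$ to kill the pressure contribution (as in the derivation of (\ref{BCC})), leaving $\int|\partial_t\vb{v}|^2 h_a$.

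Once these identities are in hand, the Riccati-type blow-up is immediate. From $E_1'(t)=\int|\nabla\vb{v}|^2 h_a\ge \int|\nabla\cdot\vb{v}|^2 h_a/d$ by Cauchy--Schwarz (the divergence is the trace of $\nabla\vb{v}$, so $|\nabla\cdot\vb{v}|^2\le d\,|\nabla\vb{v}|^2$), and $E_1(t)=-\int(\nabla\cdot\vb{v})h_a$, another Cauchy--Schwarz with the probability-type weight $h_a\,dxda$ (total mass $\int_0^1 h_a\,da=1$ integrated over $\mathbb{T}^d$ gives mass $|\mathbb{T}^d|$; one normalizes accordingly) gives $E_1(t)^2\le \big(\int h_a\big)\big(\int(\nabla\cdot\vb{v})^2 h_a\big)$, hence $E_1'\ge c\,E_1^2$ for an explicit constant, and the ODE comparison $E_1(t)\ge \big(E_1(0)^{-1}-t\big)^{-1}$ forces blow-up of $E_1$, hence of the solution, by time $d/E_1(0)$ once the constants are tracked. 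The same argument with $\partial_t\vb{v}$ in place of $\nabla\vb{v}$ and the weighted norm $\Vert\vb{v}\Vert^2$ in place of $d$ handles the $E_2>0$ case.

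For the logarithmic estimate (\ref{collapselogha}): along the flow, the continuity equation $\partial_t h_a+\nabla\cdot(\vb{v}h_a)=0$ gives, for the material derivative $D_t=\partial_t+\vb{v}\cdot\nabla$, the relation $D_t\log h_a=-\nabla\cdot\vb{v}$, so that $\frac{d}{dt}\int \log(h_a)h_a\,dxda=\int\big(D_t\log h_a\big)h_a\,dxda=-\int(\nabla\cdot\vb{v})h_a\,dxda=E_1(t)$ (using that $\frac{d}{dt}\int f h_a=\int (D_t f)h_a$ for the transported measure $h_a\,dxda$, which follows from the continuity equation). Combining with the lower bound $E_1(t)\ge d\big(d/E_1(0)-t\big)^{-1}$ and integrating in $t$ yields
\[
\int_{\mathbb{T}^d\times(0,1)}\log(h_a(t))h_a(t)\,dxda-\int_{\mathbb{T}^d\times(0,1)}\log(h_a(0))h_a(0)\,dxda=\int_0^t E_1(s)\,ds\ge d\log\!\Big(\frac{d}{E_1(0)}\cdot\frac{1}{d/E_1(0)-t}\Big),
\]
which is (\ref{collapselogha}).

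\textbf{Main obstacle.} The delicate point is the first step: carefully justifying the integration by parts and, above all, exploiting the curl-free constraint $\partial_i\varv_j-\partial_j\varv_i=0$ to show that the pressure term and the convective term organize into exactly $\int|\nabla\vb{v}|^2 h_a$ with the \emph{full} gradient (not merely the symmetric part), and likewise that the pressure drops out of $E_2'$ via $\partial_a P=0$ and $\int_0^1 h_a\,da=1$. Establishing that these computations go through verbatim in dimension $d$ — i.e. that Proposition \ref{formulas} and identities (\ref{formulasSLE2})--(\ref{formulasSLE3}) are genuinely dimension-independent — is where the real work lies; the Riccati and logarithmic steps are then routine ODE comparisons with explicit constant-chasing.
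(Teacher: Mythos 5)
Your proposal follows essentially the same route as the paper: the identities you aim for are exactly (\ref{formulasSLE1})--(\ref{formulasSLE3}) of Proposition \ref{formulasSLE}, proved there by the same mechanism you describe (continuity equation plus integration by parts, the curl-free condition giving $\nabla(\nabla\cdot\vb{v})=\Delta\vb{v}$, and the pressure killed via $\partial_aP=0$ together with $\int_0^1h_a\,da=1$, i.e. (\ref{divHSLE})), after which the trace/Cauchy--Schwarz Riccati inequalities and the ODE comparison (Lemma \ref{blowuplemma}) and the integration of $E_1=\partial_t\int\log(h_a)h_a\,dxda$ reproduce the paper's argument. The only point you leave implicit is that $\Vert\vb{v}\Vert^2$ is conserved (identity (\ref{kineticHSLE})), which is what allows the $E_2$ Riccati inequality to be integrated with a constant coefficient.
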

The following corollary is a direct consequence of Theorem \ref{blowup}, Jensen's inequality and Lemma \ref{loglemma} (applied to the probability measure $\mu=h_adxda$).

\begin{corollary}\label{blowup2}
Let $(\vb{v},h_a)$ be a smooth solution to (\ref{HSLE}) in $[0,T]$. Let $E_1$ be defined by (\ref{defE1HSLE}). Suppose that $E_1(0)>0$, then

\begin{align}
&\exp\left(\int_{\mathbb{T}^d\times (0,1)}\partial_ah_0\log \left(\partial_ah_0\right)dxda\right) \left(\frac{d}{E_1(0)}\frac{1}{\frac{d}{E_1(0)}-t}\right)^d\nonumber\\
&\leq\exp\left(\int_{\mathbb{T}^d\times (0,1)}h_a(T)\log \left(h_a(T)\right)dxda\right)\nonumber\\
&\leq \left(\int_{\mathbb{T}^d\times (0,1)}h_a(T)^{1+p}dxda\right)^{1/p},\nonumber
\end{align}
for every $p\in(0,\infty)$. Moreover

\begin{equation}
\exp\left(\int_{\mathbb{T}^d\times (0,1)}h_a\log (h_a)dxda\right)=\lim_{p\rightarrow 0}\left( \int_{\mathbb{T}^d\times (0,1)}h_a^{1+p}dxda  \right)^\frac{1}{p}.
\end{equation}

\end{corollary}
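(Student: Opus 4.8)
The plan is to invoke Theorem~\ref{blowup} directly and then convert the differential/integral estimate \eqref{collapselogha} into the stated exponential bounds via Jensen's inequality and the log-power limit lemma. First I would write out the left-hand side of \eqref{collapselogha}: since $h_a(0)=\partial_a h_0$, the right-hand side of that estimate is $\int \log(h_a(t))h_a(t)\,dxda-\int \partial_a h_0\log(\partial_a h_0)\,dxda$, so rearranging \eqref{collapselogha} gives
\begin{equation}\nonumber
\int_{\mathbb{T}^d\times(0,1)}\partial_a h_0\log(\partial_a h_0)\,dxda+d\log\!\left(\frac{d}{E_1(0)}\frac{1}{\frac{d}{E_1(0)}-t}\right)\leq \int_{\mathbb{T}^d\times(0,1)} h_a(t)\log(h_a(t))\,dxda.
\end{equation}
Exponentiating both sides and using $\log$ monotonicity yields the first inequality in the corollary (with $T$ in place of $t$, valid for $t\in[0,T)$ where the solution exists).

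Next I would handle the second inequality, $\exp\big(\int h_a(T)\log h_a(T)\big)\leq\big(\int h_a(T)^{1+p}\big)^{1/p}$ for $p\in(0,\infty)$. Here the key point is that $\mu=h_a\,dxda$ is a probability measure on $\mathbb{T}^d\times(0,1)$, by the constraint $\int_0^1 h_a\,da=1$ in \eqref{HSLE} (integrated over $\mathbb{T}^d$, which has unit measure). Applying Jensen's inequality to the convex function $s\mapsto s^p$ — or equivalently applying Lemma~\ref{loglemma} as the excerpt suggests — to the function $h_a^p$ against the measure $\mu$, one gets $\exp\big(\int \log(h_a^p)\,d\mu\big)\leq \int h_a^p\,d\mu$, i.e. $\exp\big(p\int h_a\log h_a\,dxda\big)\leq \int h_a^{1+p}\,dxda$; taking $p$-th roots gives the claimed bound. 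The final displayed identity, $\exp\big(\int h_a\log h_a\big)=\lim_{p\to0}\big(\int h_a^{1+p}\big)^{1/p}$, follows by writing $\big(\int h_a^{1+p}\,d\mu\big)^{1/p}=\exp\big(\tfrac1p\log\int h_a^p\,d\mu\big)$ and expanding $\int h_a^p\,d\mu=\int e^{p\log h_a}\,d\mu=1+p\int\log h_a\,d\mu+o(p)$ as $p\to0$, so that $\tfrac1p\log(1+p\int\log h_a\,d\mu+o(p))\to\int\log h_a\,d\mu=\int h_a\log h_a\,dxda$; this is exactly the content of Lemma~\ref{loglemma} (the $L^p$-to-entropy limit).

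I do not anticipate a serious obstacle, since this is a packaging result. The one point requiring a little care is the justification that one may pass to the limit / apply the expansion uniformly — i.e. that $\int \log h_a\,d\mu$ and $\int h_a^{1+p}\,d\mu$ are finite, so that the expansion $\int h_a^p\,d\mu = 1 + p\int\log h_a\,d\mu + o(p)$ is legitimate. For a smooth solution on $[0,T]$ with $h_a>0$ (which holds as long as the local Rayleigh condition persists and the solution stays smooth), $h_a$ is bounded above and below by positive constants on the compact domain, so $\log h_a$ is bounded and dominated convergence applies; this is presumably the setting in which Lemma~\ref{loglemma} is stated. Thus the proof reduces to: (i) restate \eqref{collapselogha} with $h_a(0)=\partial_a h_0$ and exponentiate; (ii) apply Jensen/Lemma~\ref{loglemma} with the probability measure $h_a\,dxda$; (iii) quote the $p\to0$ limit from Lemma~\ref{loglemma}.
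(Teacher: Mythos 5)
Your proposal is correct and takes essentially the same route as the paper, which presents the corollary as a direct consequence of Theorem \ref{blowup} (estimate (\ref{collapselogha}) with $h_a(0)=\partial_a h_0$, exponentiated), Jensen's inequality, and Lemma \ref{loglemma} applied to the probability measure $\mu=h_a\,dxda$. The only nitpick is that the middle step is Jensen for the convex function $\exp$ (equivalently concavity of $\log$), not for $s\mapsto s^p$, but the inequality you actually write, $\exp\left(\int \log(h_a^p)\,d\mu\right)\leq \int h_a^p\,d\mu$, is exactly the right one.
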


\begin{remark}
Corollary \ref{blowup2} gives a lower bound for the $L^p(\mathbb{T}^d\times (0,1))$ norm of $h_a$ for $p>1$, which cannot be extended to $p=1$, because $\int_{\mathbb{T}^d\times (0,1)}h_adxda=1$.

\end{remark}

\begin{remark}

The first equation in (\ref{HSLE}), is equivalent to: 

\begin{equation}\label{burgersHSLE}
\partial_t \vb{v}+\nabla\left(\frac{\lvert \vb{v}\rvert^2 }{2}+P\right)=0,
\end{equation}
thanks to the  curl free condition $\partial_i\varv_j=\partial_j\varv_i$.

\end{remark}

\begin{remark} In particular, smooth solutions to (\ref{HSLE}) satisfy the following equations in $\mathbb{T}^d$:

\begin{align} 
\left\{ \begin{aligned}
 \partial_t\int_{0}^{1} \vb{v}h_ada+\nabla\cdot \int_{0}^{1} \vb{v}\otimes \vb{v} h_ada+\nabla P&=0,& & \\
 \int_{0}^{1}h_ada &=0,& & \\
\nabla \cdot \int_{0}^{1}\vb{v}h_ada &=0,& & \\
\partial_i\varv_j-\partial_j\varv_i&=0,& &\text{for $ i,j\leq d$} ,& &
\end{aligned} \right. 
\end {align}

which can be seen as an averaged version of the incompressible Euler equations in $\mathbb{T}^d$

\begin{align} 
\left\{ \begin{aligned}
 \partial_t\vb{v}+\nabla\cdot \left( \vb{v}\otimes \vb{v}\right) +\nabla P&=0,& & \\
\nabla \cdot \vb{v} &=0,& &\\
\vb{v}\rvert_{t=0} &=\vb{v}_0.& & 
\end{aligned} \right. 
\end {align}

\end{remark}

\section{Preliminaries}

In this section we will present some elementary results concerning equations (\ref{HEE}), (\ref{VHEE}) and (\ref{HSLE}) that will be used throughout this article.

\subsection{Hydrostatic Euler equations} 

First, let us point out that any solution to (\ref{HEE}) satisfies $\int_{0}^{1}u(x,y,t)dy\equiv k$ (see Proposition \ref{propelementary}). Moreover,  $\tilde{u}(x,y,t)=u(x+kt,y,t)-k$ solves (\ref{HEE}) with a slightly different initial data and $\int_{0}^{1}\tilde{u}(x,y,t)dy\equiv 0$. Hence, without loss of generality, we may assume that the solution to (\ref{HEE}) satisfies 

\begin{equation}\label{intzero}
\int_{0}^{1}u(x,y,t)dy\equiv 0.
\end{equation}

The following proposition summarizes some properties that will be used in the rest of the article.

\begin{proposition}\label{propelementary}
Assume that the initial vorticity $\omega_0\in H^4(\mathbb{T}\times(0,1)) $ satisfies the local Rayleigh condition (i.e. $\partial_y\omega_0>0$). Let $\omega\in C([0,T];H^4(\mathbb{T}\times(0,1)))$ be the solution to the problem (\ref{VHEE}) that satisfies the local Rayleigh condition in $[0,T]$. Then we have

\begin{align}
&\mathcal{A}(\omega)=(1-y)\int_{0}^{y}z\omega(x,z,t)dz+y\int_{y}^{1}(1-z)\omega(x,z,t)dz,\label{eqstream}\\
&u(x,y,t)=\int_{0}^{1}z\omega(x,z,t)dz-\int_{y}^{1}\omega(x,z,t)dz,\label{equ}\\
&v(x,y,t)=(1-y)\int_{0}^{y}z\omega_x(x,z,t)dz+y\int_{y}^{1}(1-z)\omega_x(x,z,t)dz,\label{eqv}\\
&\partial_x\int_{0}^{1}udy=0,\label{intux}\\
&\partial_t \int_{0}^{1}udy=0,\label{intut}\\
&P_x(x,t)=-\partial_x\left(\int_{0}^{1}u^2(x,y,t)dy\right),\label{defpressure}\\
 &\partial_t \Vert u\Vert_{2}^2=0,\label{kinetic}\\
&\Vert u\Vert_{\infty}\leq \frac{3}{2}\Vert \omega_0\Vert_{\infty}.\label{uinfty}
\end{align}

\end{proposition}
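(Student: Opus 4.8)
The plan is to establish the eight identities essentially in the order listed: the first three, (\ref{eqstream})--(\ref{eqv}), come from solving explicitly the vertical two-point boundary value problem that defines the operator $\mathcal{A}$, while the remaining five, (\ref{intux})--(\ref{uinfty}), come from integrating the equations in (\ref{HEE})--(\ref{VHEE}) over $y\in(0,1)$ or over $\Omega$ and invoking the no-penetration condition $v\rvert_{y=0,1}=0$ together with $x$-periodicity. Since $\omega\in C([0,T];H^4(\Omega))$ and $H^4(\Omega)\hookrightarrow C^2(\bar\Omega)$, the fields $u,v,P$ are classical, so differentiation under the integral sign and integration by parts in $y$ need no further justification.

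For (\ref{eqstream}): by definition $\psi:=\mathcal{A}\omega$ is the stream function, $(u,v)=\nabla^\perp\psi$ with $-\partial_y^2\psi=\omega$; the condition $v\rvert_{y=0,1}=0$ forces $\psi\rvert_{y=0}$ and $\psi\rvert_{y=1}$ to be $x$-independent, and the normalization (\ref{intzero}) forces them to coincide, so after subtracting an irrelevant function of $t$ alone we may take $\psi\rvert_{y=0,1}=0$. This BVP has a unique solution, so it suffices to check that the right-hand side of (\ref{eqstream}) vanishes at $y=0,1$ and satisfies $-\partial_y^2(\,\cdot\,)=\omega$; this is a direct computation in which the two first-order terms $\pm\,y(1-y)\omega$ cancel and the second derivative collapses to $-\omega$. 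Then (\ref{equ}) follows from $u=-\partial_y\mathcal{A}\omega$ after the one-line simplification $\int_0^y z\omega\,dz-\int_y^1(1-z)\omega\,dz=\int_0^1 z\omega\,dz-\int_y^1\omega\,dz$, and (\ref{eqv}) follows from $v=\partial_x\mathcal{A}\omega$ by differentiating (\ref{eqstream}) in $x$.

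For the averaged statements: integrating $u_x+v_y=0$ in $y$ and using $v\rvert_{y=0,1}=0$ gives (\ref{intux}) (equivalently, a Fubini computation in (\ref{equ}) shows $\int_0^1 u\,dy\equiv 0$). Integrating the momentum equation $u_t+uu_x+vu_y+P_x=0$ in $y$, and rewriting $\int_0^1(uu_x+vu_y)\,dy=2\int_0^1 uu_x\,dy=\partial_x\int_0^1 u^2\,dy$ (integrating the $vu_y$ term by parts in $y$, where the boundary term vanishes and $v_y=-u_x$), together with $P_y=0$, yields $\partial_t\int_0^1 u\,dy+\partial_x\int_0^1 u^2\,dy+P_x=0$; integrating this over $\mathbb{T}$ kills the two $x$-derivative terms and, combined with (\ref{intux}), gives (\ref{intut}), and substituting back gives (\ref{defpressure}). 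For (\ref{kinetic}), multiply the momentum equation by $u$ and integrate over $\Omega$: the pressure term is $\int_{\mathbb{T}}P_x\big(\int_0^1 u\,dy\big)dx=0$, while $u^2u_x=\partial_x(u^3/3)$ and, after an integration by parts in $y$, $uvu_y$ also contributes a perfect $x$-derivative, so both advective terms integrate to zero over $\Omega$ and $\tfrac12\partial_t\Vert u\Vert_2^2=0$. Finally, for (\ref{uinfty}): the velocity field is tangent to $\partial\Omega$ (since $v\rvert_{y=0,1}=0$), so the flow of $(u,v)$ is a bijection of $\bar\Omega$ and $\omega$, being transported by (\ref{VHEE}), satisfies $\Vert\omega(t)\Vert_{\infty}=\Vert\omega_0\Vert_{\infty}$; then the crude bound $|u(x,y,t)|\le\Vert\omega(t)\Vert_{\infty}\big(\int_0^1 z\,dz+(1-y)\big)\le\tfrac32\Vert\omega_0\Vert_{\infty}$ in (\ref{equ}) finishes the proof.

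None of these steps is deep; the only places demanding care are fixing the sign and normalization conventions for $\mathcal{A}$ so that (\ref{eqstream})--(\ref{eqv}) are internally consistent, and verifying that every boundary term arising from integration by parts in $y$ genuinely vanishes --- which is precisely the role of the hypothesis $v\rvert_{y=0,1}=0$. I anticipate no obstacle beyond this bookkeeping.
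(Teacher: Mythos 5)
Your proposal is correct and follows essentially the same route as the paper: solve the vertical two-point boundary value problem for $\mathcal{A}\omega$ (normalized via (\ref{intzero})) to get (\ref{eqstream})--(\ref{eqv}), then obtain (\ref{intux})--(\ref{kinetic}) by integrating the momentum equation in $y$ and over $\Omega$ with the boundary condition and $x$-periodicity, and (\ref{uinfty}) from (\ref{equ}) plus conservation of $\Vert\omega\Vert_\infty$ along the transport. The only difference is cosmetic ordering (you derive (\ref{intut}) and (\ref{defpressure}) together from the $y$-averaged equation, the paper does them sequentially), which changes nothing of substance.
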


\begin{proof}

By (\ref{intzero}), we may assume that the stream function $\mathcal{A}(\omega)$ solves

\begin{align} \label{SHEE}
\left\{ \begin{aligned}
 -\partial_{y}^{2}\mathcal{A}(\omega)&=\omega,& & \\
\mathcal{A}(\omega)\rvert_{y=0,1}&=0,& &
\end{aligned} \right. 
\end {align}
which implies

\begin{align}
 \mathcal{A}(\omega)&=-\int_{0}^{y}\int_{0}^{z}\omega(x,s,t)dsdz+y\int_{0}^{1}\int_{0}^{z}\omega(x,s,t)dsdz \nonumber\\
 &=(1-y)\int_{0}^{y}z\omega(x,z,t)dz+y\int_{y}^{1}(1-z)\omega(x,z,t)dz,\nonumber
 \end{align}
from which we get (\ref{eqstream}). Next, (\ref{equ}) and (\ref{eqv}) follow directly from (\ref{eqstream}). The incompressibility condition and the boundary value in (\ref{HEE}) implies (\ref{intux}). Next, by (\ref{intux}), (\ref{HEE}), the $x$-periodicity and integration by parts,
\begin{align}
\partial_t \int_{0}^{1}udy&=\partial_t\int_{\mathbb{T}\times (0,1)}udxdy\nonumber\\
&=-\int_{\mathbb{T}\times (0,1)}\partial_x\left(\frac{u^2}{2}+P\right)+v\omega dxdy\nonumber\\
&=-\int_{\mathbb{T}\times (0,1)}uu_xdxdy\nonumber\\
&=-\int_{\mathbb{T}\times (0,1)}\partial_x\left(\frac{u^2}{2}\right)dxdy\nonumber\\
&=0,
\end{align} 
 which proves (\ref{intut}). Let us prove (\ref{defpressure}). By (\ref{HEE}), (\ref{intut}) and integration by parts, we have

\begin{align}
0&=\int_{0}^{1}\left(u_t+uu_x+vu_y+P_x\right)dy\nonumber\\
&=\int_{0}^{1}\left(uu_x+vu_y\right)dy+P_x\nonumber\\
&= 2\int_{0}^{1}uu_xdy+P_x,\nonumber
\end{align}
which implies (\ref{defpressure}). By (\ref{HEE}), the $x$-periodicity and integration by parts

\begin{align}
\partial_t\int_{\mathbb{T}\times (0,1)}u^2dxdy&=2\int_{\mathbb{T}\times (0,1)}uu_tdxdy\nonumber\\
&=-2\int_{\mathbb{T}\times (0,1)}(u^2u_x+vuu_y+uP_x)dxdy\nonumber\\
&=-3\int_{\mathbb{T}\times (0,1)}u^2u_xdxdy+2\int_{\mathbb{T}}\left(\int_{0}^{1}u_xdy\right)P(x,t)dx\nonumber\\
&=0,\nonumber
\end{align}
from which we get (\ref{kinetic}). Applying (\ref{equ}), we get

\begin{align}
\Vert u\Vert_{\infty}&\leq \Vert \omega\Vert_{\infty}\int_{0}^{1}zdz+\Vert \omega\Vert_{\infty}\nonumber\\
&= \frac{3}{2}\Vert \omega\Vert_{\infty}\nonumber\\
&= \frac{3}{2}\Vert \omega_0\Vert_{\infty}.\nonumber
\end{align}
This completes the proof of Proposition \ref{propelementary}.
\end{proof}

\subsection{Semilagrangian equations}

The following proposition summarizes how the change of variable works between the hydrostatic Euler equations (\ref{VHEE}) and the semilagrangian equations (\ref{SLE1D}).
\begin{proposition}\label{summarySLE1D} 
Let $\omega$ be a smooth solution to (\ref{VHEE}) that satisfies the local Rayleigh condition, (\ref{constantvorticity1}) and (\ref{constantvorticity2}) in $[0,T]$. Let $(\varv, h_a)$ be a smooth solution to (\ref{SLE1D}) in $[0,T]$. Assume that $\omega$, $h$, $u$ and $\varv$ satisfy (\ref{SLE1Ddefh}) and (\ref{SLE1Ddefv}) in $[0,T]$, then  
\begin{align}
&h_a(x,t,a)=\frac{1}{\omega_y(x,h(x,t,a),t)},\label{summarySLE1Dha}\\
&h_x(x,t,a)=-\frac{\omega_x}{\omega_y}(x,h(x,t,a),t),\label{summarySLE1Dhx}\\
&\varv_a(x,t,a)=(k+a)h_a,\label{summarySLE1Dva}\\
&\varv_x(x,t,a)=\left(u_x-\frac{\omega\omega_x}{\omega_y}\right)(x,h(x,t,a),t),\label{summarySLE1Dvx}\\
&\varv(x,t,a) =-\frac{k+1}{2}+(k+a)h-\frac{1}{2}\int_{0}^{1}h^2db+\int_{a}^{1}hdb, \label{summarySLE1Dv}\\
&\mathcal{A}(\omega)(x,h(x,t,a),t)=-\varv(x,t,a) h(x,t,a)+\frac{1}{2}(k+a)h^2-\frac{1}{2}\int_{0}^{a}h^2(x,t,b)db,\label{summarySLE1Dstream}\\
&h_t(x,t,a)-\partial_x\left(\mathcal{A}(\omega)(x,h(x,t,a),t)\right)=0,\label{summarySLE1Dht}\\
&\varv_t(x,t,a)=\left(u\left(\frac{\omega\omega_x}{\omega_y}-u_x\right)(x,h(x,t,a),t)-P_x(x,t)\right),\label{summarySLE1Dvt}
\end{align}
where $0\leq t\leq T$.
\end{proposition}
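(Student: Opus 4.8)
The plan is to derive each identity by repeatedly differentiating the defining relations \eqref{SLE1Ddefh} and \eqref{SLE1Ddefv} and combining them with the explicit formulas from Proposition \ref{propelementary}. I would begin with the purely kinematic identities. Differentiating $\omega(x,h(x,t,a),t)\equiv k+a$ in $a$ gives $\omega_y(x,h,t)\,h_a = 1$, which is \eqref{summarySLE1Dha}; differentiating in $x$ gives $\omega_x(x,h,t) + \omega_y(x,h,t)\,h_x = 0$, which combined with \eqref{summarySLE1Dha} yields \eqref{summarySLE1Dhx}. For \eqref{summarySLE1Dva}, differentiate $\varv(x,t,a) = u(x,h(x,t,a),t)$ in $a$: $\varv_a = u_y(x,h,t)\,h_a = \omega(x,h,t)\,h_a = (k+a)h_a$. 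For \eqref{summarySLE1Dvx}, differentiate the same relation in $x$: $\varv_x = u_x(x,h,t) + u_y(x,h,t)\,h_x = u_x(x,h,t) + \omega(x,h,t)\,h_x$, and substitute \eqref{summarySLE1Dhx} to get $\varv_x = \bigl(u_x - \tfrac{\omega\omega_x}{\omega_y}\bigr)(x,h,t)$.

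For the integrated formula \eqref{summarySLE1Dv}, I would start from \eqref{equ}, namely $u(x,y,t) = \int_0^1 z\,\omega(x,z,t)\,dz - \int_y^1 \omega(x,z,t)\,dz$, evaluate at $y = h(x,t,a)$, and change variables $z = h(x,t,b)$ in both integrals (legitimate since $h(x,t,\cdot)$ is a diffeomorphism $[0,1]\to[0,1]$ by the local Rayleigh condition, with $h(x,t,0)=0$, $h(x,t,1)=1$ thanks to \eqref{constantvorticity1}--\eqref{constantvorticity2} and the normalization $\int_0^1 h_a\,da = 1$). Under this substitution $\omega(x,z,t)\,dz = (k+b)h_b\,db$, so $\int_0^1 z\,\omega\,dz = \int_0^1 h(x,t,b)(k+b)h_b\,db$ and $\int_h^1 \omega\,dz = \int_a^1 (k+b)h_b\,db$. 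Then I would integrate the first by parts in $b$ using $(k+b)h_b = \partial_b\bigl(\tfrac{k+b}{1}h\bigr) - h$ — more precisely, rewrite $h(k+b)h_b = \tfrac12(k+b)\partial_b(h^2)$, integrate by parts to get $\tfrac12(k+1)h(x,t,1)^2 - \tfrac12\int_0^1 h^2\,db = \tfrac{k+1}{2} - \tfrac12\int_0^1 h^2\,db$; similarly handle $\int_a^1(k+b)h_b\,db$ by parts to produce $(k+1)h(x,t,1) - (k+a)h - \int_a^1 h\,db = (k+1) - (k+a)h - \int_a^1 h\,db$. Collecting terms yields \eqref{summarySLE1Dv}. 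Identity \eqref{summarySLE1Dstream} follows in the same spirit: evaluate \eqref{eqstream} at $y = h$, split the channel integral via $\int_0^y(\cdots) = \int_0^1 - \int_y^1$, change variables, and recognize the combination that, after using \eqref{summarySLE1Dv}, collapses to $-\varv h + \tfrac12(k+a)h^2 - \tfrac12\int_0^a h^2\,db$.

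The two time-derivative identities are the heart of the matter and where the real work lies. For \eqref{summarySLE1Dht}, differentiate $\omega(x,h(x,t,a),t)\equiv k+a$ in $t$ to get $\omega_t(x,h,t) + \omega_y(x,h,t)\,h_t = 0$, then substitute the vorticity transport equation $\omega_t = -u\omega_x - v\omega_y$ evaluated at $y=h$: this gives $-u(x,h,t)\omega_x(x,h,t) - v(x,h,t)\omega_y(x,h,t) + \omega_y(x,h,t)\,h_t = 0$, i.e. $h_t = v(x,h,t) + u(x,h,t)\tfrac{\omega_x}{\omega_y}(x,h,t) = v(x,h,t) - \varv(x,t,a)\,h_x(x,t,a)$ using \eqref{summarySLE1Dhx}. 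On the other hand, I must show $\partial_x\bigl(\mathcal{A}(\omega)(x,h(x,t,a),t)\bigr) = v(x,h,t) - \varv h_x$; expanding the left side gives $\mathcal{A}(\omega)_x(x,h,t) + \mathcal{A}(\omega)_y(x,h,t)\,h_x$, and since $(u,v) = \nabla^\perp\mathcal{A}(\omega)$ means $u = \partial_y\mathcal{A}(\omega)$, $v = -\partial_x\mathcal{A}(\omega)$, this equals $-v(x,h,t) + u(x,h,t)\,h_x$ — wait, the sign must be reconciled against the convention; I would pin down $\nabla^\perp = (\partial_y, -\partial_x)$ so that $v = -\mathcal{A}(\omega)_x$, giving $\partial_x(\mathcal{A}(\omega)(x,h,t)) = -v(x,h,t) + u(x,h,t)h_x$, which matches $-(h_t)$ up to the overall sign dictated by \eqref{summarySLE1Dht}; reconciling this sign bookkeeping carefully is the delicate point. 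Finally \eqref{summarySLE1Dvt} comes from differentiating $\varv = u(x,h(x,t,a),t)$ in $t$: $\varv_t = u_t(x,h,t) + u_y(x,h,t)\,h_t = u_t(x,h,t) + \omega(x,h,t)\,h_t$. Substitute the momentum equation $u_t = -uu_x - vu_y - P_x$ at $y=h$, use $u_y = \omega$, and substitute the expression for $h_t$ just derived; after the $v$-terms cancel (the $-v\omega$ from $u_t$ against the $\omega\cdot v$ coming from $\omega h_t$), what remains is $\varv_t = -u(x,h,t)u_x(x,h,t) - P_x(x,t) + u(x,h,t)\tfrac{\omega\omega_x}{\omega_y}(x,h,t) = u\bigl(\tfrac{\omega\omega_x}{\omega_y} - u_x\bigr)(x,h,t) - P_x(x,t)$, which is \eqref{summarySLE1Dvt}. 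The main obstacle throughout is not any single estimate but the careful chain-rule bookkeeping — keeping straight which arguments are evaluated at $y=h$ versus $(x,t,a)$, and getting every sign in the $\nabla^\perp$ convention and the integrations by parts exactly right so that the $v$-contributions cancel as claimed.
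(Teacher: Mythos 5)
Your chain-rule derivations of \eqref{summarySLE1Dha}--\eqref{summarySLE1Dvx}, your change of variables plus integration by parts for \eqref{summarySLE1Dv}, and your computation of \eqref{summarySLE1Dvt} (cancelling the $v$-terms after substituting the momentum equation and the formula for $h_t$) are exactly the paper's argument. You diverge in two places, and both alternatives work but one has a loose end you must close. For \eqref{summarySLE1Dstream} the paper does not go through \eqref{eqstream}: it uses the simpler identity $-\mathcal{A}(\omega)(x,y,t)=\int_0^y u(x,z,t)\,dz$, changes variables to get $\int_0^a \varv\, h_b\,db$, and integrates by parts using \eqref{summarySLE1Dva}; your route through \eqref{eqstream} does collapse to the same expression (I checked), but only after a noticeably longer computation that you left as ``recognize the combination''. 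For \eqref{summarySLE1Dht} the paper again avoids the stream function: since $h(x,t,a)=\int_0^a h_b\,db$, it simply integrates the second equation of \eqref{SLE1D} in $b\in[0,a]$ and identifies $\int_0^a \varv h_b\,db=-\mathcal{A}(\omega)(x,h,t)$ via \eqref{summarySLE1Dstream}; your route via the transport equation for $\omega$ is a legitimate alternative, but the sign question you flag is not optional bookkeeping, and you resolve it the wrong way.

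Concretely: the convention for $\nabla^\perp$ is not free here; it is forced by $\omega=u_y$ together with \eqref{SHEE}, i.e. $-\partial_y^2\mathcal{A}(\omega)=\omega$ with $\mathcal{A}(\omega)\rvert_{y=0,1}=0$. If one had $u=\partial_y\mathcal{A}(\omega)$, as you ``pin down'', then $u_y=\partial_y^2\mathcal{A}(\omega)=-\omega$, contradicting $\omega=u_y$. The correct reading, which is also what \eqref{equ} and \eqref{eqv} in Proposition \ref{propelementary} encode, is $u=-\partial_y\mathcal{A}(\omega)$ and $v=+\partial_x\mathcal{A}(\omega)$. With this convention your own computation closes with the stated sign:
\begin{equation*}
\partial_x\bigl(\mathcal{A}(\omega)(x,h,t)\bigr)=\mathcal{A}(\omega)_x(x,h,t)+\mathcal{A}(\omega)_y(x,h,t)\,h_x=v(x,h,t)-\varv\, h_x=h_t,
\end{equation*}
which is precisely \eqref{summarySLE1Dht}, and nothing else in your argument is affected (your derivation of \eqref{summarySLE1Dvt} only uses $h_t=v(x,h,t)-\varv h_x$, which is convention-independent). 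With that fix the proposal is a complete proof, essentially equivalent to the paper's but trading the paper's use of the $h_a$-equation of \eqref{SLE1D} for a direct use of the vorticity equation of \eqref{VHEE}.
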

\begin{proof}

First observe that (\ref{summarySLE1Dha}), (\ref{summarySLE1Dhx}), (\ref{summarySLE1Dva}) and (\ref{summarySLE1Dvx}) follow directly by (\ref{SLE1Ddefh}) and (\ref{SLE1Ddefv}). Applying (\ref{equ}) and the change of variable $z=h(x,t,b)$

\begin{align}
u(x,h,t)&=\int_{0}^{1}z\omega(x,z,t)dz-\int_{h(	x,t,a)}^{1}\omega(x,z,t)dz\nonumber\\
&=\int_{0}^{1}(k+b)hh_bdb-\int_{a}^{1}(k+b)h_bdb\nonumber\\
&=-\frac{k+1}{2}-\frac{1}{2}\int_{0}^{1}h^2db+(k+a)h+\int_{a}^{1}hdb,\nonumber
\end{align}
which proves (\ref{summarySLE1Dv}). Next, by (\ref{VHEE}), we have $-\mathcal{A}(\omega)(x,y,t)=\int_{0}^{y}u(x,z,t)dz$, which implies

\begin{align}
-\mathcal{A}(\omega)(x,h,t)&=\int_{0}^{h(x,t,a)}u(x,z,t)dz\nonumber\\
&=\int_{0}^{a}\varv (x,t,b)h_b(x,t,b)db\nonumber\\
&=\varv(x,t,a) h(x,t,a)-\int_{0}^{a}(k+b)hh_bdb\nonumber\\
&=\varv(x,t,a) h(x,t,a)-\frac{1}{2}(k+a)h^2+\frac{1}{2}\int_{0}^{a}h^2(x,t,b)db,\nonumber
\end{align}
which gives (\ref{summarySLE1Dstream}). Let us prove (\ref{summarySLE1Dht}). First observe that $h(x,t,a)=\int_{0}^{a}h_bdb$, because $h(x,0,t)\equiv 0$. Thus, (\ref{summarySLE1Dht}) follows by integrating the second equation of (\ref{SLE1D}) in $b\in [0,a]$. Finally, by (\ref{summarySLE1Dht})

\begin{align}
\varv_t&=u_t(x,h,t)+(k+a)h_t\nonumber\\
&=-uu_x(x,h,t)-v(x,h,t)(k+a)-P_x(x,t)+(k+a)(v(x,h,t)-u(x,h,t)h_x)\nonumber\\
&=-uu_x(x,h,t)-P_x(x,t)-(k+a)u(x,h,t)h_x,\nonumber
\end{align}
which leads to (\ref{summarySLE1Dvt}) thanks to (\ref{summarySLE1Dhx}). This concludes the proof of Proposition \ref{summarySLE1D}. 

\end{proof}

\begin{proposition}\label{elementaryHSLE}
Let $(\vb{v},h_a)$ be a smooth solution to (\ref{HSLE}), then

\begin{align}
\partial_t\int_{\mathbb{T}^d\times (0,1)}\lvert \vb{v}\rvert^2h_adxda=0,\label{kineticHSLE}\\
P(x,t)=(-\Delta)^{-1}\left(\nabla\cdot\nabla\cdot\int_{0}^{1}(\vb{v}\otimes \vb{v} )h_bdb\right).\label{defpressureHSLE}
\end{align}

\end{proposition}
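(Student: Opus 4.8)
The plan is to establish the two identities directly from the equations of motion \eqref{HSLE}, mirroring the arguments already carried out for the hydrostatic Euler system in Proposition \ref{propelementary}. For \eqref{kineticHSLE}, I would differentiate $\int \lvert \vb{v}\rvert^2 h_a$ in time, distribute the derivative as $\partial_t(\lvert\vb{v}\rvert^2 h_a)=2(\vb{v}\cdot\partial_t\vb{v})h_a+\lvert\vb{v}\rvert^2\partial_t h_a$, and then substitute the momentum equation in its Bernoulli form $\partial_t\vb{v}=-\nabla\bigl(\tfrac{\lvert\vb{v}\rvert^2}{2}+P\bigr)$ (valid by the curl-free condition, as noted in the remark around \eqref{burgersHSLE}) together with the continuity equation $\partial_t h_a=-\nabla\cdot(\vb{v}h_a)$. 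This gives
\begin{align}
\partial_t\int_{\mathbb{T}^d\times(0,1)}\lvert\vb{v}\rvert^2h_a\,dxda
&=-2\int_{\mathbb{T}^d\times(0,1)}\vb{v}\cdot\nabla\!\left(\frac{\lvert\vb{v}\rvert^2}{2}+P\right)h_a\,dxda
-\int_{\mathbb{T}^d\times(0,1)}\lvert\vb{v}\rvert^2\,\nabla\cdot(\vb{v}h_a)\,dxda.\nonumber
\end{align}
Integrating the second term by parts in $x$ over the torus (no boundary terms) turns it into $\int \vb{v}\cdot\nabla(\lvert\vb{v}\rvert^2)h_a = 2\int \vb{v}\cdot\nabla(\tfrac{\lvert\vb{v}\rvert^2}{2})h_a$, which cancels the corresponding piece of the first term; what remains is $-2\int \vb{v}\cdot\nabla P\,h_a$, and integrating this by parts yields $2\int P\,\nabla\cdot(\vb{v}h_a)\,dxda$. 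Here I would invoke the constraint obtained by integrating the continuity equation in $a\in[0,1]$ and using $\int_0^1 h_a\,da=1$ (time-independent), namely $\nabla\cdot\int_0^1\vb{v}h_a\,da=0$; since $P$ is independent of $a$ by the fourth equation of \eqref{HSLE}, the $a$-integral passes onto $\vb{v}h_a$ and the term vanishes, giving \eqref{kineticHSLE}.

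For \eqref{defpressureHSLE}, I would integrate the momentum equation of \eqref{HSLE} in $a$ over $[0,1]$: using $\partial_a P=0$ and $\int_0^1\partial_t h_a\,da=\partial_t\int_0^1 h_a\,da=0$ one obtains the averaged momentum balance $\partial_t\int_0^1\vb{v}h_a\,da+\nabla\cdot\int_0^1(\vb{v}\otimes\vb{v})h_a\,da+\nabla P=0$ (this is the averaged Euler system displayed in the last remark of the introduction). Taking the divergence in $x$ and using the incompressibility constraint $\nabla\cdot\int_0^1\vb{v}h_a\,da=0$ established above kills the first term, leaving $\Delta P=-\nabla\cdot\nabla\cdot\int_0^1(\vb{v}\otimes\vb{v})h_b\,db$, which inverts to \eqref{defpressureHSLE} on $\mathbb{T}^d$ (the right-hand side has zero mean, so $(-\Delta)^{-1}$ is well defined up to the irrelevant additive constant in $P$).

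The only genuinely delicate point is the justification of the constraint $\nabla\cdot\int_0^1\vb{v}h_a\,da=0$ for all time and the legitimacy of commuting $\partial_t$ with the spatial and $a$-integrals; both are immediate for a \emph{smooth} solution, which is the standing hypothesis, so there is no real obstacle — one only needs to be careful that the curl-free hypothesis is used to rewrite $\vb{v}\cdot\nabla\vb{v}=\nabla(\lvert\vb{v}\rvert^2/2)$ and that $P$'s $a$-independence is what allows the averaging manipulations to close. I expect the writeup to be short, parallel in structure to the proof of Proposition \ref{propelementary}.
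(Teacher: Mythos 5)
Your proposal is correct and follows essentially the same route as the paper: derive the constraint $\nabla\cdot\int_0^1\vb{v}h_a\,da=0$ from the continuity equation and $\int_0^1 h_a\,da=1$, prove \eqref{kineticHSLE} by substituting the Bernoulli form \eqref{burgersHSLE} and the continuity equation and integrating by parts, and prove \eqref{defpressureHSLE} by forming the $h_a$-weighted averaged momentum balance (using the curl-free condition) and taking the divergence. The minor differences (you cancel the cubic terms pairwise rather than writing them as a single divergence, and you phrase the averaged balance as "integrating the momentum equation in $a$" when it is really the $h_a$-weighted combination with the continuity equation) do not affect the argument.
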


\begin{proof}
First observe that
\begin{align}
\partial_t\int_{0}^{1}h_ada&=-\nabla\cdot\int_{0}^{1}\vb{v}h_ada\nonumber\\
&=0.\label{divHSLE}
\end{align}

By (\ref{burgersHSLE}), (\ref{divHSLE}) and integration by parts
\begin{align}
\partial_t\int_{\mathbb{T}^d\times (0,1)}\lvert \vb{v}\rvert^2h_adxda&=\int_{\mathbb{T}^d\times (0,1)}2\vb{v}\cdot\vb{v_t}h_a+\lvert \vb{v}\rvert^2\partial_t h_adxda\nonumber\\
&=\int_{\mathbb{T}^d\times (0,1)}-\vb{v}\cdot\nabla\left(\lvert\vb{v}\rvert^2+2P\right)h_a-\nabla\cdot (\vb{v}h_a)\lvert\vb{v}\rvert^2dxda\nonumber\\
&=\int_{\mathbb{T}^d\times (0,1)}-\nabla\cdot (\vb{v}\lvert\vb{v}\rvert^2h_a)+2P\nabla\cdot(\vb{v}h_a) dxda\nonumber\\
&=2\int_{\mathbb{T}^d}P\nabla\cdot\left(\int_{0}^{1}\vb{v}h_ada\right) dxda\nonumber\\
&=0,\nonumber
\end{align}
which proves (\ref{kineticHSLE}). Now, let us prove (\ref{defpressureHSLE}). First observe that

\begin{align}
\partial_t\int_{0}^{1}\vb{v}h_ada&=-\int_{0}^{1}\left( \nabla\left(\frac{\lvert \vb{v}\rvert^2}{2}+P\right)h_a+\vb{v}\nabla\cdot(\vb{v}h_a)  \right)da \nonumber\\
&=-\nabla P-\nabla\cdot\int_{0}^{1}(\vb{v}\otimes \vb{v})h_ada,\nonumber
\end{align}
thanks to $\partial_i\varv_j=\partial_j\varv_i$. Applying divergence, we get:

$$0=-\Delta P-\nabla\cdot\nabla \cdot\int_{0}^{1}(\vb{v}\otimes \vb{v})h_ada, $$
which implies (\ref{defpressureHSLE}). This concludes the proof of Proposition \ref{elementaryHSLE}.

\end{proof}

\section{Monoticity and lower bounds for $E_1$ and $E_2$}\label{monoticitysection}

Throughout this section we will work in the framework of the local well-posedness result by Masmoudi and Wong \cite{MaTKW12}, namely, we will assume that $\omega_0\in H^4(\mathbb{T}\times (0,1))$ satisfies the local Rayleigh condition. We will show the monoticity of $E_1$ and $E_2$, as well as the validity of certain lower bounds that will be useful for proving Theorem \ref{collapse}.

The following two lemmas are elementary, so the proof will be omitted.
\begin{lemma}\label{materialderivative}

Assume that $\omega_0\in H^4(\mathbb{T}\times (0,1))$ satisfies the local Rayleigh condition. Let $\omega\in C([0,T];H^4(\mathbb{T}\times(0,1)))$ be the solution to (\ref{VHEE}) that satisfies the local Rayleigh condition in $[0,T]$. Let $f,g \in C^1([0,T];H^3(\mathbb{T}\times (0,1)))$. Denote by $D_t=\partial_t+u\partial_x+v\partial_y$ the material derivative, then:

\begin{align}
\partial_t\int_{\mathbb{T}\times(0,1)}f(x,y,t)dxdy &=\int_{\mathbb{T}\times(0,1)}D_tf(x,y,t)dxdy,\label{materialint}\\
D_t(fg) &=fD_tg+gD_tf,\label{materialproduct}\\
D_t\left(\frac{f}{g}\right) &=\frac{gD_tf-fD_tg}{g^2}, \text{\hspace{0.1cm} if g is strictly positive,}\label{materialfrac}\\
D_t\left(\log(g)\right) &=\frac{D_tg}{g}, \text{\hspace{0.1cm} if g is strictly positive,}\label{materiallog}
\end{align}

where $0\leq t \leq T$.

\end{lemma}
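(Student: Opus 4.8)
\textbf{Proof plan for Lemma \ref{materialderivative}.}

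The plan is to verify the four identities directly, exploiting the incompressibility of the velocity field $(u,v)$ and the no-penetration boundary condition $v\rvert_{y=0,1}=0$. For the first identity, I would start from $D_t f = f_t + u f_x + v f_y$ and rewrite the transport terms in divergence form: since $u_x + v_y = 0$, we have $u f_x + v f_y = \nabla\cdot\big((u,v) f\big)$. Integrating over $\mathbb{T}\times(0,1)$, the horizontal part of the divergence integrates to zero by $x$-periodicity, and the vertical part contributes the boundary term $\int_{\mathbb{T}}\big(v f\big)\big\rvert_{y=0}^{y=1}dx$, which vanishes because $v=0$ on $y=0,1$. Hence $\int D_t f\,dxdy = \int f_t\,dxdy = \partial_t \int f\,dxdy$, where the last step is justified by the assumed regularity $f\in C^1([0,T];H^3)$ (which makes differentiation under the integral sign legitimate).

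For the remaining three identities, the point is simply that the material derivative $D_t = \partial_t + u\partial_x + v\partial_y$ is a first-order linear differential operator, hence a derivation: it satisfies the Leibniz rule. Thus \eqref{materialproduct} is immediate from applying the product rule to each of $\partial_t$, $\partial_x$, $\partial_y$ separately and summing. The quotient rule \eqref{materialfrac} follows formally from \eqref{materialproduct} applied to $f = (f/g)\cdot g$, or just by the standard computation $D_t(f/g) = (D_t f)/g - f(D_t g)/g^2$, valid pointwise wherever $g>0$ (which is where the strict positivity hypothesis enters, to keep $f/g$ in the stated regularity class). Finally \eqref{materiallog} is the chain rule applied to $\log$: $D_t(\log g) = (\log)'(g)\,D_t g = D_t g / g$, again needing $g>0$ so that $\log g$ is defined and $C^1$.

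I do not expect any genuine obstacle here, which is presumably why the authors state that the proof is omitted — the only mild subtleties are bookkeeping ones: checking that all the functions involved (including $u$, $v$, and quotients/logarithms of the given functions) have enough regularity for the integrations by parts and the differentiation under the integral sign to be rigorous, and confirming that $u,v$ themselves lie in a suitable space so that $D_t f$ makes sense and inherits the continuity in $t$ needed. Given $\omega\in C([0,T];H^4)$ and formula \eqref{equ}--\eqref{eqv} expressing $u,v$ in terms of $\omega$, one has $u,v\in C([0,T];H^4)$ as well (with the relevant trace of $v$ vanishing), so these technical points are routine. If I were writing this out in full I would state these regularity remarks once and then present the four one-line computations above.
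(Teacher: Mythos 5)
Your proof is correct and is exactly the standard argument the paper has in mind (the paper omits the proof as elementary): the transport term is put in divergence form via $u_x+v_y=0$ and killed by $x$-periodicity together with $v\rvert_{y=0,1}=0$, while the product, quotient and logarithm rules are just the Leibniz/chain rule for the first-order operator $D_t$. The only nitpick is cosmetic: from \eqref{eqv} one gets $v\in C([0,T];H^3(\mathbb{T}\times(0,1)))$ rather than $H^4$ (it involves $\omega_x$), which is still more than enough regularity for the computation.
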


\begin{lemma}\label{lemmamaterialomega}
Assume that $\omega_0\in H^4(\mathbb{T}\times (0,1))$ satisfies the local Rayleigh condition. Let $\omega\in C([0,T];H^4(\mathbb{T}\times(0,1)))$ be the solution to (\ref{VHEE}) that satisfies the local Rayleigh condition in $[0,T]$. Denote by $D_t=\partial_t+u\partial_x+v\partial_y$ the material derivative, then:

\begin{align}
D_t (u)&=-P_x,\label{materialu}\\
D_t(\omega) &=0,\label{materialomega}\\
D_t (u_x) &=-u_x^2-v_x\omega-P_{xx},\label{materialux}\\
D_t (P_x) &=P_{xt}+uP_{xx},\label{materialpx}\\
D_t(\omega_x) &=-u_x\omega_x-v_x\omega_y,\label{materialomegax}\\
D_t(\omega_y) &=u_x\omega_y-\omega\omega_x,\label{materialomegay}
\end{align}
where $0\leq t\leq T$.

\end{lemma}

\begin{lemma}\label{lemmaformulas}

Assume that $\omega_0\in H^4(\mathbb{T}\times (0,1))$ satisfies the local Rayleigh condition. Let $\omega\in C([0,T];H^4(\mathbb{T}\times(0,1)))$ be the solution to (\ref{VHEE}) that satisfies the local Rayleigh condition in $[0,T]$. Denote by $D_t=\partial_t+u\partial_x+v\partial_y$ the material derivative, then:

\begin{align}
D_t\left(\log\left(\frac{1}{\omega_y}\right)\right)&=\frac{\omega\omega_x}{\omega_y}-u_x, \label{lemmaformulas1}\\
D_t\left(\frac{\omega\omega_x}{\omega_y}-u_x\right)&=\left\lvert\frac{\omega\omega_x}{\omega_y}-u_x\right\rvert^2+P_{xx},\label{lemmaformulas2}\\
D_t\left(u^2\left(\frac{\omega\omega_x}{\omega_y}-u_x\right)-uP_x\right)&=\left\lvert u\left(\frac{\omega\omega_x}{\omega_y}-u_x\right)-P_x \right\rvert^2-uP_{xt},\label{lemmaformulas3}
\end{align}

for $0\leq t<T$.

\end{lemma}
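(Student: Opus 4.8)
The plan is to prove all three identities in Lemma \ref{lemmaformulas} by direct computation, using only the calculus rules of Lemma \ref{materialderivative}, the evolution identities of Lemma \ref{lemmamaterialomega}, and the elementary facts collected in Proposition \ref{propelementary} (in particular the pressure identity (\ref{defpressure})). Since $\omega_y$ is strictly positive on $[0,T]$ by the local Rayleigh condition, all the quotients and logarithms appearing are legitimate and the product/quotient/log rules (\ref{materialproduct})--(\ref{materiallog}) apply.

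For (\ref{lemmaformulas1}): apply (\ref{materiallog}) with $g=1/\omega_y$ (strictly positive), so $D_t\log(1/\omega_y)=\omega_y D_t(1/\omega_y)$. Using the quotient rule (\ref{materialfrac}) on $1/\omega_y$ together with $D_t\omega_y=u_x\omega_y-\omega\omega_x$ from (\ref{materialomegay}) gives $D_t(1/\omega_y)=-(u_x\omega_y-\omega\omega_x)/\omega_y^2$, hence $D_t\log(1/\omega_y)=-(u_x\omega_y-\omega\omega_x)/\omega_y=\omega\omega_x/\omega_y-u_x$, which is (\ref{lemmaformulas1}).

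For (\ref{lemmaformulas2}): I first compute $D_t(\omega\omega_x/\omega_y)$ and then subtract $D_t(u_x)$. Write $\omega\omega_x/\omega_y = \omega\cdot(\omega_x/\omega_y)$; since $D_t\omega=0$ by (\ref{materialomega}), the product rule (\ref{materialproduct}) reduces this to $\omega\, D_t(\omega_x/\omega_y)$. Now apply the quotient rule to $\omega_x/\omega_y$ with $D_t\omega_x=-u_x\omega_x-v_x\omega_y$ from (\ref{materialomegax}) and $D_t\omega_y=u_x\omega_y-\omega\omega_x$ from (\ref{materialomegay}); the algebra collapses to $D_t(\omega_x/\omega_y) = -2u_x(\omega_x/\omega_y) - v_x + \omega(\omega_x/\omega_y)^2$, so that $D_t(\omega\omega_x/\omega_y) = -2u_x(\omega\omega_x/\omega_y) - v_x\omega + \omega^2(\omega_x/\omega_y)^2$. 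Subtracting $D_t u_x = -u_x^2 - v_x\omega - P_{xx}$ from (\ref{materialux}) cancels the $-v_x\omega$ terms and leaves $D_t(\omega\omega_x/\omega_y - u_x) = \omega^2(\omega_x/\omega_y)^2 - 2u_x(\omega\omega_x/\omega_y) + u_x^2 + P_{xx} = (\omega\omega_x/\omega_y - u_x)^2 + P_{xx}$, which is (\ref{lemmaformulas2}).

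For (\ref{lemmaformulas3}): abbreviate $F = \omega\omega_x/\omega_y - u_x$, so the quantity is $u^2 F - uP_x$. Using the product rule repeatedly with $D_t u = -P_x$ from (\ref{materialu}), $D_t F = F^2 + P_{xx}$ from (\ref{lemmaformulas2}), and $D_t P_x = P_{xt} + uP_{xx}$ from (\ref{materialpx}), I get $D_t(u^2 F) = 2uF\,D_t u + u^2 D_t F = -2uF P_x + u^2 F^2 + u^2 P_{xx}$ and $D_t(uP_x) = P_x D_t u + u D_t P_x = -P_x^2 + uP_{xt} + u^2 P_{xx}$. Subtracting, the $u^2 P_{xx}$ terms cancel and I am left with $D_t(u^2 F - uP_x) = u^2 F^2 - 2uF P_x + P_x^2 - uP_{xt} = (uF - P_x)^2 - uP_{xt}$, which is (\ref{lemmaformulas3}). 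None of these steps presents a genuine obstacle — the whole lemma is bookkeeping — but the one point demanding care is the cancellation of the $v_x$ (equivalently $v_x\omega$) terms between $D_t(\omega\omega_x/\omega_y)$ and $D_t u_x$ in the proof of (\ref{lemmaformulas2}), and likewise the cancellation of the $u^2P_{xx}$ terms in (\ref{lemmaformulas3}); these are exactly what make the right-hand sides clean perfect squares plus a pressure remainder, so I would double-check the signs there.
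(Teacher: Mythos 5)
Your computation is correct and follows essentially the same route as the paper: apply the material-derivative calculus rules of Lemma \ref{materialderivative} together with the evolution identities (\ref{materialu})--(\ref{materialomegay}), obtaining (\ref{lemmaformulas1}) from the log/quotient rules, (\ref{lemmaformulas2}) from $D_t(\omega\omega_x/\omega_y)$ minus (\ref{materialux}) with the $v_x\omega$ terms cancelling, and (\ref{lemmaformulas3}) from the product rule with the $u^2P_{xx}$ terms cancelling. The sign bookkeeping you flag checks out, so no gap remains.
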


\begin{proof}
By Lemmas \ref{materialderivative} and \ref{lemmamaterialomega},

\begin{align}
D_t\left(\log\left(\frac{1}{\omega_y}\right)\right)&=-\frac{u_x\omega_y-\omega\omega_x}{\omega_y} \nonumber \\
&=\frac{\omega\omega_x}{\omega_y}-u_x, \nonumber 
\end{align}
which gives (\ref{lemmaformulas1}). Next, by Lemmas \ref{materialderivative} and \ref{lemmamaterialomega}, 
\begin{align}
D_t\left(\frac{\omega\omega_x}{\omega_y}\right)&=\frac{-u_x\omega\omega_x\omega_y-v_x\omega\omega_y-u_x\omega\omega_x\omega_y+\omega^2\omega_x^2}{\omega_y^2} \nonumber \\
&=\left\lvert \frac{\omega\omega_x}{\omega_y}-u_x  \right\rvert^2-\omega v_x-u_x^2. \label{materialomegaomegax}
\end{align}
Thus, by (\ref{materialomegaomegax}) and (\ref{materialux})

 $$D_t\left(\frac{\omega\omega_x}{\omega_y}-u_x \right)=\left\lvert \frac{\omega\omega_x}{\omega_y}-u_x  \right\rvert^2+P_{xx},$$
from which we get (\ref{lemmaformulas2}). Finally, by Lemmas \ref{materialderivative} and \ref{lemmamaterialomega} and (\ref{lemmaformulas2})
\begin{align}
& D_t\left(u^2\left(\frac{\omega\omega_x}{\omega_y}-u_x \right)-uP_x\right) \nonumber \\
 &=u^2\left(\left\lvert \frac{\omega\omega_x}{\omega_y}-u_x  \right\rvert^2+P_{xx}\right)-2uP_x\left(\frac{\omega\omega_x}{\omega_y}-u_x\right)+P_x^2-u^2P_{xx}-uP_{xt} \nonumber \\
&=\left\lvert u\left(\frac{\omega\omega_x}{\omega_y}-u_x\right)u-P_x  \right\rvert^2-uP_{xt},  \nonumber
\end{align}
which implies (\ref{lemmaformulas3}). This completes the proof of Lemma \ref{lemmaformulas}.
\end{proof}

\begin{proposition}\label{formulas}
Assume that $\omega_0\in H^4(\mathbb{T}\times (0,1))$ satisfies the local Rayleigh condition. Let $\omega\in C([0,T];H^4(\mathbb{T}\times(0,1)))$ be the solution to (\ref{VHEE}) that satisfies the local Rayleigh condition in $[0,T]$. Set

\begin{equation}\label{E1}
E_1(t)=\int_{\mathbb{T}\times (0,1)} \left(\frac{\omega\omega_x}{\omega_y}-u_x\right) dxdy =\int_{\mathbb{T}\times (0,1)} \frac{\omega\omega_x}{\omega_y} dxdy 
\end{equation}

and

\begin{equation}\label{E2}
 E_2(t)=\int_{\mathbb{T}\times (0,1)}\left(u^2 \left(\frac{\omega\omega_x}{\omega_y}-u_x\right)-uP_x\right) dxdy=\int_{\mathbb{T}\times (0,1)}u^2 \frac{\omega\omega_x}{\omega_y}dxdy.
\end{equation}
 
  Then 

\begin{align}
E_1 &=\partial_t \int_{\mathbb{T}\times (0,1)}\log\left(\frac{1}{\omega_y}\right)dxdy,\label{formulas1}\\
\partial_t E_1 &= \int_{\mathbb{T}\times (0,1)} \left\lvert u_x-\frac{\omega\omega_x}{\omega_y}\right\rvert^2dxdy,\label{formulas2}\\
\partial_t E_2 &= \int_{\mathbb{T}\times (0,1)} \left\lvert P_x+u\left(u_x-\frac{\omega\omega_x}{\omega_y}\right)\right\rvert^2dxdy,\label{formulas3}
\end{align}
where $0\leq t<T$.

\end{proposition}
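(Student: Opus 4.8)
The plan is to establish the three identities in Proposition \ref{formulas} by combining the pointwise material-derivative identities of Lemma \ref{lemmaformulas} with the integration rule \eqref{materialint} from Lemma \ref{materialderivative}, and then disposing of the pressure terms by integration by parts in $x$ together with the structural facts \eqref{defpressure}, \eqref{kinetic}, \eqref{intux} collected in Proposition \ref{propelementary}. Before anything else I would record the two alternative expressions for $E_1$ and $E_2$: the equalities in \eqref{E1} and \eqref{E2} hold because $\int_{\mathbb{T}\times(0,1)}u_x\,dxdy=0$ (periodicity in $x$, or \eqref{intux}) and $\int_{\mathbb{T}\times(0,1)}uP_x\,dxdy=\int_{\mathbb{T}}P\,(\int_0^1 u_x\,dy)\,dx=0$ by \eqref{intux} again; this lets me freely switch to whichever form is convenient.

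For \eqref{formulas1}, I apply \eqref{materialint} to $f=\log(1/\omega_y)$, which is legitimate since $\omega_y$ is strictly positive and bounded below on $[0,T]$ (local Rayleigh condition) and $\omega\in C([0,T];H^4)$ gives enough regularity; then \eqref{lemmaformulas1} identifies $D_t\log(1/\omega_y)=\tfrac{\omega\omega_x}{\omega_y}-u_x$, whose integral is exactly $E_1$. For \eqref{formulas2}, I apply \eqref{materialint} to $f=\tfrac{\omega\omega_x}{\omega_y}-u_x$ and invoke \eqref{lemmaformulas2}, obtaining $\partial_t E_1=\int |\tfrac{\omega\omega_x}{\omega_y}-u_x|^2\,dxdy+\int P_{xx}\,dxdy$; the second integral vanishes because $\int_{\mathbb{T}}P_{xx}\,dx=0$ by periodicity. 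For \eqref{formulas3}, I apply \eqref{materialint} to $f=u^2(\tfrac{\omega\omega_x}{\omega_y}-u_x)-uP_x$ and use \eqref{lemmaformulas3}, which gives $\partial_t E_2=\int |P_x+u(u_x-\tfrac{\omega\omega_x}{\omega_y})|^2\,dxdy-\int uP_{xt}\,dxdy$, so the whole matter reduces to showing $\int_{\mathbb{T}\times(0,1)}uP_{xt}\,dxdy=0$.

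The one genuinely non-routine point is precisely this last vanishing. Since $P=P(x,t)$ is independent of $y$, $\int_{\mathbb{T}\times(0,1)}uP_{xt}\,dxdy=\int_{\mathbb{T}}P_{xt}\big(\int_0^1 u\,dy\big)\,dx=0$ immediately from \eqref{intzero} (or, without the normalization, from $\int_0^1 u\,dy\equiv k$ constant, so its $x$-derivative—and hence the pairing against $P_{xt}$ after one integration by parts in $x$—is zero). Alternatively one can differentiate \eqref{defpressure}, $P_x=-\partial_x\int_0^1 u^2\,dy$, in time and integrate against $u$, but the cleaner route is the $y$-average. I would also remark that \eqref{formulas2} and \eqref{formulas3} are the instantaneous (non-integrated) versions of the identities \eqref{properties1}, \eqref{properties2} in Theorem \ref{properties}, so the proof here is the computational heart underpinning those statements. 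No serious obstacle remains; the proof is essentially a bookkeeping exercise once the pressure terms are seen to integrate to zero, and the regularity hypotheses ($\omega\in C([0,T];H^4)$, $s=4>2+1$, plus the uniform lower bound on $\omega_y$) are exactly what is needed to justify differentiating under the integral sign and applying Lemma \ref{lemmaformulas}.
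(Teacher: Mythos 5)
Your proposal is correct and follows essentially the same route as the paper: each identity is obtained by applying \eqref{materialint} to the integrands supplied by Lemma \ref{lemmaformulas}, with the pressure contributions ($\int P_{xx}$ and $\int uP_{xt}$) annihilated via $x$-periodicity and the vanishing vertical averages \eqref{intux}, \eqref{intzero}. The only cosmetic difference is in the last term: you average in $y$ first and pair $P_{xt}$ against $\int_0^1 u\,dy\equiv\text{const}$, whereas the paper integrates by parts in $x$ to replace $-uP_{xt}$ by $u_xP_t$ and then uses \eqref{intux} — the same observation in two equivalent orders.
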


\begin{proof}

By Lemma \ref{materialderivative}, (\ref{lemmaformulas1}) and the $x$-periodicity

\begin{equation}
\partial_t \int_{\mathbb{T}\times (0,1)}\log\left(\frac{1}{\omega_y}\right)dxdy=\int_{\mathbb{T}\times (0,1)}\left(\frac{\omega\omega_x}{\omega_y}-u_x\right)dxdy,
\end{equation}
which proves (\ref{formulas1}). Next, by Lemma \ref{materialderivative}, (\ref{lemmaformulas2}) and the $x$-periodicity
\begin{align}
\partial_t \int_{\mathbb{T}\times (0,1)}\left(\frac{\omega\omega_x}{\omega_y}-u_x\right)dxdy&=\int_{\mathbb{T}\times (0,1)}\left\lvert\frac{\omega\omega_x}{\omega_y}-u_x\right\rvert^2+P_{xx}dxdy \nonumber \\
&=\int_{\mathbb{T}\times (0,1)}\left\lvert\frac{\omega\omega_x}{\omega_y}-u_x\right\rvert^2dxdy, \nonumber
\end{align}
which gives (\ref{formulas2}). Finally, by Lemma \ref{materialderivative} and (\ref{lemmaformulas3})
\begin{align}
\partial_t E_2&=\int_{\mathbb{T}\times (0,1)} \left\lvert P_x+u\left(u_x-\frac{\omega\omega_x}{\omega_y}\right)\right\rvert^2-uP_{xt}dxdy \nonumber \\
&=\int_{\mathbb{T}\times (0,1)}\left\lvert P_x+u\left(u_x-\frac{\omega\omega_x}{\omega_y}\right)\right\rvert^2+u_xP_tdxdy,\nonumber
\end{align}
from which we get (\ref{formulas3}), thanks to (\ref{intux}). This concludes the proof of Proposition \ref{formulas}.

\end{proof}

\begin{lemma}\label{stationary1}
Assume that $\omega_0\in H^4(\mathbb{T}\times (0,1))$ satisfies the local Rayleigh condition. Let $\omega\in C([0,T];H^4(\mathbb{T}\times(0,1)))$ be the solution to (\ref{VHEE}) that satisfies the local Rayleigh condition in $[0,T]$. If $ \int_{\mathbb{T}\times (0,1)} \left\lvert u_x-\frac{\omega\omega_x}{\omega_y}\right\rvert^2dxdy=0$ (at any time) then $\omega$ is stationary and independent of $x$.

\end{lemma}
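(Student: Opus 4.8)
The plan is to show that the hypothesis forces $\omega_x\equiv 0$ at the time $t_0\in[0,T]$ at which the integral vanishes, and then to identify $\omega$ with an $x$-independent stationary solution by uniqueness.

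First I would reduce to a pointwise statement. Since $\omega(\cdot,t_0)\in H^4(\mathbb{T}\times(0,1))$ is $C^2$ by Sobolev embedding and $\omega_y$ is continuous and bounded below by a positive constant on the compact domain (local Rayleigh), the integrand $\left\lvert u_x-\frac{\omega\omega_x}{\omega_y}\right\rvert^2$ is continuous and nonnegative, so its vanishing integral forces
\[
u_x=\frac{\omega\omega_x}{\omega_y}\qquad\text{pointwise at }t_0 .
\]
Differentiating this identity in $y$ and using $\partial_y u_x=u_{xy}=\omega_x$ gives $\omega_x=\omega_x+\omega\,\partial_y\!\big(\omega_x/\omega_y\big)$, i.e. $\omega\,\partial_y(\omega_x/\omega_y)=0$; since for each $x$ the equation $\omega(x,y)=0$ has at most one solution $y$ and $\omega_x/\omega_y$ is continuous (again $\omega_y>0$), the quotient $\phi:=\omega_x/\omega_y$ is a continuous function of $x$ alone, and $u_x=\phi(x)\,\omega$.

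Next I would exploit the no-penetration conditions. Because $\phi$ does not depend on $y$, incompressibility $v_y=-u_x$ together with $u_y=\omega$ give $\partial_y\big(v+\phi u\big)=-u_x+\phi\omega=0$, so $v+\phi u$ depends only on $x$. Evaluating at $y=0$, where $v=0$, yields $v(x,y)=\phi(x)\big(u(x,0)-u(x,y)\big)$; evaluating at $y=1$, where $v=0$ again, yields $\phi(x)M(x)=0$ with $M(x):=u(x,1)-u(x,0)=\int_0^1\omega(x,y)\,dy$. Differentiating $M$ and using $\omega_x=\phi\,\omega_y$ gives $M'(x)=\int_0^1\omega_x\,dy=\phi(x)N(x)$, where $N(x):=\int_0^1\omega_y\,dy=\omega(x,1)-\omega(x,0)>0$. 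Hence $M M'=N\,(\phi M)=0$, so $M^2$ is constant on the torus; a continuous real function with constant square on a connected set is constant, so $M'\equiv 0$, hence $\phi N\equiv 0$, and $N>0$ forces $\phi\equiv 0$. Therefore $\omega_x=\phi\,\omega_y\equiv 0$, and then $u_x=\phi\omega=0$, $v=\phi\big(u(x,0)-u\big)=0$ at time $t_0$; that is, $\omega(\cdot,t_0)$ is $x$-independent. Such a profile is a stationary solution of (\ref{VHEE}) (for it $u$ is $x$-independent, $v\equiv 0$, and $\omega_t=-u\omega_x-v\omega_y=0$), so by the uniqueness part of the Masmoudi--Wong theorem — applied forward from $t_0$, and backward via the reversibility $(x,t)\mapsto(-x,-t)$ of (\ref{VHEE}) — the solution $\omega$ coincides with this profile on $[0,T]$, and is thus stationary and independent of $x$.

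I expect the one genuinely delicate step to be the passage from $u_x=\phi\,\omega$ to $\phi\equiv 0$: the natural idea of using $\int_0^1 u\,dy=0$ fails because that identity turns out to be automatically satisfied and conveys no information, so one is forced to combine the boundary relation $\phi M=0$ coming from $v|_{y=0,1}=0$ with the relation $M'=\phi N$. The remaining ingredients — the reduction to a pointwise identity, the $y$-independence of $v+\phi u$, and the uniqueness/reversibility argument — are routine.
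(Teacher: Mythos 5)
Your proof is correct and takes essentially the same route as the paper: your $\phi=\omega_x/\omega_y$ is the paper's $f(x,t_0)$, and your identities $\phi M=0$, $M'=\phi N$ and the constancy of $M^2$ are precisely (\ref{stationary1ux}), (\ref{stationary1omegax}) and (\ref{stationary1final}), followed by the same uniqueness (plus time-reversal) step. The only differences are cosmetic — you obtain the $y$-independence of $\phi$ directly, avoiding the paper's case split on the vanishing of $\omega$ and its piecewise representation, and you derive $\phi M=0$ from $v\rvert_{y=0,1}=0$, which is equivalent to the paper's use of $\partial_x\int_0^1 u\,dy=0$ via (\ref{intux}), so your closing remark that this mean-value identity ``conveys no information'' is inaccurate but harmless.
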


\begin{proof}

Assume that $\frac{\omega\omega_x}{\omega_y}-u_x\equiv 0$ at time $t=t_0$. Then,

\begin{equation}\label{stationary1derivative}
\partial_y\left(\frac{u_x}{\omega}\right)=\frac{\omega\omega_x-u_x\omega_y}{\omega^2}=\frac{\omega_y}{\omega^2}\left(\frac{\omega\omega_x}{\omega_y}-u_x\right)\equiv 0, \text{\hspace{0.1cm} if $\omega\neq 0$.}
\end{equation}
Let us first assume that $\omega$ does not vanishes. By (\ref{intux}), there exists $y_0\in[0,1]$ such that $u_x(x,y_0(x),t_0)=\frac{u_x(x,y_0(x),t_0)}{\omega(x,y_0(x),t_0)}=0$. By (\ref{stationary1derivative}), $u_x\equiv 0$ at time $t=t_0$. Thus, $\omega$ is $x$-independent at $t=t_0$. Moreover, $\omega(y,t_0)$ and $\omega(x,y,t_0+t)$ satisfy (\ref{VHEE}) with the same initial condition, since the solution is unique, $\omega(x,y,t_0+t)\equiv \omega(y,t_0)$ (see \cite{MaTKW12}).

Conversely, assume that $\omega$ vanishes in some points. Applying (\ref{stationary1derivative}), we have
 \begin{equation} \label{piecewiseu1}
    u_x(x,y,t_0) =
    \begin{cases}
      f(x,t_0)\omega(x,y,t_0)+c(x), & if \hspace{0.3cm} \omega>0, \\
      \tilde{f}(x,t_0)\omega(x,y,t_0)+\tilde{c}(x),        & if \hspace{0.3cm}  \omega<0, \\
      0, & if \hspace{0.3cm}  \omega=0. \\
     
    \end{cases}
  \end{equation}
Since $u_x$ is continuous, $c\equiv\tilde{c}\equiv 0$. Moreover, 

\begin{equation}
    \omega_x =
    \begin{cases}
      f(x,t_0)\omega_y, & if \hspace{0.3cm} \omega>0, \\
      \tilde{f}(x,t_0)\omega_y,        & if \hspace{0.3cm}  \omega<0, \\
      0, & if \hspace{0.3cm}  \omega=0. \\
     
    \end{cases}
  \end{equation}
Therefore, the continuity of $\frac{\omega_x}{\omega_y}$ yields $f(x,t_0)=\tilde{f}(x,t_0)$. Thus, $u_x=f(x,t_0)\omega$ and $\omega_x=f(x,t_0)\omega_y$. Integrating in $y\in [0,1]$, we get

\begin{equation}\label{stationary1ux}
\int_{0}^{1}u_xdy=0=f(x,t_0)\int_{0}^{1}\omega dy=f(x,t_0)u\rvert_{y=0}^{y=1},
\end{equation}

\begin{equation}\label{stationary1omegax}
\int_{0}^{1}\omega_xdy=f(x,t_0)\int_{0}^{1}\omega_y dy=\partial_x u\rvert_{y=0}^{y=1}.
\end{equation}
Applying (\ref{stationary1ux}), (\ref{stationary1omegax}) and the positivity of $\omega_y$, we get

\begin{equation}\label{stationary1final}
\partial_x \left(\left\lvert u(1)-u(0)\right\rvert^2\right)=2u\rvert_{y=0}^{y=1}\partial_x u\rvert_{y=0}^{y=1}\equiv 0.
\end{equation}
Finally, if $f(x_0,t_0)\neq 0$, (\ref{stationary1ux}) implies $u(x_0,y,t_0)\rvert_{y=0}^{y=1}=0$. Moreover, by (\ref{stationary1final}), $u(x,y,t_0)\rvert_{y=0}^{y=1}\equiv 0$, which contradicts  (\ref{stationary1omegax}).
Thus, $f\equiv u_x\equiv 0$ at time $t=t_0$. Furthermore, $\omega(y,t_0)$ and $\omega(x,y,t_0+t)$ satisfy (\ref{VHEE}) with the same initial data. Then, by uniqueness $\omega(x,y,t_0+t)\equiv \omega(y,t_0)$ (see \cite{MaTKW12}). This completes the proof of Lemma \ref{stationary1}.

\end{proof}

\begin{corollary}\label{stationary}
Let $\omega\in H^4(\mathbb{T}\times (0,1))$ be a stationary solution to (\ref{VHEE}) that satisfies the local Rayleigh condition. Then, $\omega$ is $x$-independent. Conversely, if $\omega$ is an $x$-independent solution to (\ref{VHEE}), then $\omega$ is stationary.

\end{corollary}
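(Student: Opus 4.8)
The plan is to read off the statement from Proposition~\ref{formulas} and Lemma~\ref{stationary1}, with essentially no new computation required; the whole point is that stationarity kills the time derivative of $E_1$, and Lemma~\ref{stationary1} already converts that into $x$-independence.

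For the forward implication, suppose $\omega$ is a stationary solution satisfying the local Rayleigh condition. Since the profile is time-independent, it trivially lies in $C([0,T];H^4(\mathbb{T}\times(0,1)))$ and retains the sign $\partial_y\omega>0$ for every $T>0$, so Proposition~\ref{formulas} and Lemma~\ref{stationary1} are applicable. First I would note that $E_1$, being built solely out of the time-independent functions $\omega,\omega_x,\omega_y$, is constant in $t$, so $\partial_t E_1=0$ (equivalently: $\omega_y$ is time-independent, hence $\int_{\mathbb{T}\times(0,1)}\log(1/\omega_y)\,dxdy$ is constant, and by (\ref{formulas1}) this gives $E_1\equiv 0$, whence $\partial_t E_1=0$ as well). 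Then identity (\ref{formulas2}) forces
\[
\int_{\mathbb{T}\times(0,1)}\left|u_x-\frac{\omega\omega_x}{\omega_y}\right|^2dxdy=0 ,
\]
and Lemma~\ref{stationary1}, applied at this time, yields that $\omega$ is independent of $x$, which is the claim.

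For the converse, suppose $\omega=\omega(y,t)$ is $x$-independent, so $\omega_x\equiv 0$. Then formula (\ref{eqv}) for the vertical velocity gives $v\equiv 0$, and substituting $\omega_x\equiv 0$, $v\equiv 0$ into the transport equation in (\ref{VHEE}) yields $\omega_t=-u\omega_x-v\omega_y=0$; hence $\omega$ is stationary. (Equivalently, $u$ is $x$-independent by (\ref{equ}), so $u_x\equiv 0$, and incompressibility together with $v|_{y=0,1}=0$ forces $v\equiv 0$, giving $\omega_t=0$.)

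There is no real obstacle here: all the work has been done in Proposition~\ref{formulas} and Lemma~\ref{stationary1}. The only step deserving a word of care is the remark that a stationary $H^4$ profile with $\partial_y\omega>0$ genuinely qualifies as a Masmoudi–Wong solution on arbitrarily long time intervals, so that those earlier results may legitimately be invoked — but this is immediate from the definitions.
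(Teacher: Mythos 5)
Your proposal is correct and follows essentially the same route as the paper: stationarity gives $\partial_t E_1=0$, identity (\ref{formulas2}) then forces $\int_{\mathbb{T}\times(0,1)}\lvert u_x-\frac{\omega\omega_x}{\omega_y}\rvert^2dxdy=0$, and Lemma \ref{stationary1} yields $x$-independence, while the converse uses $\omega_x\equiv0$, hence $v\equiv0$, in the transport equation. The extra remarks (the aside via (\ref{formulas1}) and the applicability of the Masmoudi--Wong framework) are harmless refinements of the same argument.
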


\begin{proof}

Let $\omega$ be a stationary solution, then by (\ref{formulas2}):

$$\partial_t E_1=\int_{\mathbb{T}\times (0,1)}\left\lvert  \frac{\omega\omega_x}{\omega_y}-u_x\right\rvert^2dxdy=0.$$By Lemma \ref{stationary1}, $\omega$ is $x$-independent.
On the other hand, if $\omega$ is an $x$-independent solution to (\ref{VHEE}), then

$$ \omega_t=-u\omega_x-\mathcal{A}(\omega_x)\omega_y\equiv 0,$$
where $\mathcal{A}(\omega)$ is defined by (\ref{eqstream}), which concludes the proof of Corollary \ref{stationary}.

\end{proof}

\begin{lemma}\label{stationary2}
Assume that $\omega_0\in H^4(\mathbb{T}\times (0,1))$ satisfies the local Rayleigh condition. Let $\omega\in C([0,T];H^4(\mathbb{T}\times(0,1)))$ be the solution to (\ref{VHEE}) that satisfies the local Rayleigh condition in $[0,T]$. If $ \int_{\mathbb{T}\times (0,1)} \left\lvert P_x+u\left(u_x-\frac{\omega\omega_x}{\omega_y}\right)\right\rvert^2dxdy=0$ (at any time) then $\omega$ is stationary and independent of $x$.

\end{lemma}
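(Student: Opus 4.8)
The plan is to mimic the proof of Lemma \ref{stationary1}, replacing the role of $E_1$ by $E_2$. The starting point is the hypothesis that $G:=P_x+u\left(u_x-\frac{\omega\omega_x}{\omega_y}\right)\equiv 0$ at some time $t=t_0$. First I would recall from (\ref{defpressure}) that $P_x=-\partial_x\int_0^1 u^2\,dy$, so $P_x$ is $x$-dependent only, i.e.\ $P_{xy}=0$; hence differentiating $G\equiv 0$ in $y$ gives $\partial_y\!\left(u\left(u_x-\frac{\omega\omega_x}{\omega_y}\right)\right)=0$ at $t=t_0$. The aim is to extract from this, together with the constraint $\int_0^1 u_x\,dy=0$ from (\ref{intux}) (equivalently $\int_0^1\omega_x\,dy=\partial_x(u|_{y=0}^{y=1})$), that in fact $u_x-\frac{\omega\omega_x}{\omega_y}\equiv 0$ at $t=t_0$, so that Lemma \ref{stationary1} applies and finishes the argument via uniqueness as in \cite{MaTKW12}.

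The key computation is: using (\ref{stationary1derivative}), wherever $\omega\neq 0$ we have $\partial_y\!\left(\frac{u_x}{\omega}\right)=\frac{\omega_y}{\omega^2}\left(\frac{\omega\omega_x}{\omega_y}-u_x\right)$, so $u_x-\frac{\omega\omega_x}{\omega_y}=-\frac{\omega^2}{\omega_y}\,\partial_y\!\left(\frac{u_x}{\omega}\right)$. Plugging into $G\equiv 0$ and writing $w:=\frac{u_x}{\omega}$, on each region $\{\omega>0\}$ and $\{\omega<0\}$ we get $P_x=\frac{u\omega^2}{\omega_y}\,\partial_y w = u\omega^2 w_y \cdot h_a$ where $h_a=1/\omega_y$; since $u=\mathcal A(\omega)\text{-derivative}$ and $u_y=\omega$, one can also write $u\,\partial_y(u_x/\omega)$ in divergence-like form. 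The cleaner route is to argue directly on the ODE in $y$: for fixed $x$, on an interval where $\omega>0$, $G\equiv0$ reads $u\,\partial_y(u_x/\omega)=-\frac{\omega_y}{\omega}P_x$, a linear first-order ODE for $u_x/\omega$ (treating $P_x=P_x(x)$ as a constant in $y$); solving and matching the pieces across zeros of $\omega$ using continuity of $u_x$ and of $\frac{\omega_x}{\omega_y}$, exactly as in the proof of Lemma \ref{stationary1}, one is reduced to a relation of the form $u_x=f(x,t_0)\omega+(\text{explicit }P_x\text{-term})$. Integrating in $y\in[0,1]$ and invoking $\int_0^1 u_x\,dy=0$ pins down $f$ and the $P_x$-term; then the same contradiction argument with $\partial_x(|u(1)-u(0)|^2)\equiv 0$ from (\ref{stationary1final})-type reasoning forces $u_x\equiv 0$, hence $P_x\equiv 0$, hence $u_x-\frac{\omega\omega_x}{\omega_y}\equiv 0$ at $t=t_0$.

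Once $u_x-\frac{\omega\omega_x}{\omega_y}\equiv 0$ at $t=t_0$ is established, I would simply invoke Lemma \ref{stationary1} to conclude that $\omega$ is $x$-independent at $t=t_0$ and, by uniqueness of solutions to (\ref{VHEE}) (comparing $\omega(y,t_0)$ with $\omega(x,y,t_0+t)$ as in the earlier proofs and in \cite{MaTKW12}), that $\omega$ is stationary and $x$-independent on all of $[0,T]$. Alternatively, and perhaps more economically, once $P_x\equiv0$ at $t=t_0$ the integrand in (\ref{formulas3}) forces $\partial_t E_2=0$ only instantaneously, so the ODE argument is still needed; the direct reduction above is the safer path.

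The main obstacle I expect is the non-vanishing-denominator bookkeeping: the quantity $u_x-\frac{\omega\omega_x}{\omega_y}$ is only manifestly smooth because of the local Rayleigh condition ($\omega_y>0$), but the substitution via $u_x/\omega$ has genuine trouble at the zero set of $\omega$, and one must handle the possibility that $\omega$ changes sign. This is dealt with exactly as in Lemma \ref{stationary1}: split into the open sets $\{\omega>0\}$, $\{\omega<0\}$, solve the linear ODE in $y$ on each, and use continuity of $u_x$ and of $\omega_x/\omega_y$ (the latter being $-h_x$ in semilagrangian notation, hence continuous) across the interface $\{\omega=0\}$ to glue the pieces and kill the integration "constants" $c(x),\tilde c(x)$. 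The presence of the extra $P_x$ term compared to the $E_1$ case only adds an explicit inhomogeneous term to this ODE, which is then eliminated by the integral constraint $\int_0^1 u_x\,dy=0$, so no essentially new difficulty arises beyond careful accounting.
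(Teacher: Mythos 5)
Your overall strategy---deduce from $P_x+u\bigl(u_x-\frac{\omega\omega_x}{\omega_y}\bigr)\equiv 0$ at $t_0$ that $P_x\equiv 0$ and $u_x-\frac{\omega\omega_x}{\omega_y}\equiv 0$ separately, and then quote Lemma \ref{stationary1}---is not the paper's route, and as written it has a genuine gap. The paper's proof hinges on the identity (\ref{stationary2prop}): using $u_t=-uu_x-v\omega-P_x$ and $\omega_t=-u\omega_x-v\omega_y$, the hypothesis is exactly $\frac{\omega\omega_t}{\omega_y}-u_t\equiv 0$, i.e.\ the same structure as in Lemma \ref{stationary1} with $\partial_x$ replaced by $\partial_t$. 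The paper then repeats that argument verbatim with $u_t,\omega_t$ in place of $u_x,\omega_x$ (using (\ref{intut}) instead of (\ref{intux}) and the boundary identity $\partial_t u\rvert_{y=0}^{y=1}=\frac12\partial_x(u^2(0)-u^2(1))$ from (\ref{stationary2omegat})), concludes $u_t\equiv 0$ at $t_0$, hence stationarity by uniqueness, and only afterwards gets $x$-independence from $\partial_tE_1=0$ via Lemma \ref{stationary1}. Your proposal misses this key identity entirely.

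The concrete gap in your plan is the coefficient $u$ in your ``ODE''. After substituting $w=u_x/\omega$, the hypothesis reads $u\,\omega^2\,\partial_y w=P_x\,\omega_y$ (your version has the wrong sign and the wrong power of $\omega$), so solving for $w$ requires dividing by $u$ as well as by $\omega$. Under the paper's standing normalization (\ref{intzero}), $u(x,\cdot)$ has zero mean and therefore vanishes somewhere in $[0,1]$ for every $x$; your bookkeeping only treats the zero set of $\omega$, while the solution formula involves $\int P_x\omega_y/(u\omega^2)\,dy$, whose behaviour across zeros of $u$ (and the matching conditions there) is precisely the hard part and is never addressed. Likewise, the claimed reduction to ``$u_x=f(x,t_0)\omega+(\text{explicit }P_x\text{-term})$'' and the assertion that the contradiction based on (\ref{stationary1ux})--(\ref{stationary1final}) survives the inhomogeneous term are stated, not proved; those identities used the exact proportionalities $u_x=f\omega$, $\omega_x=f\omega_y$, and they change form once a $P_x$-term is present. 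If you want to keep your direct strategy, there is a much shorter correct path that uses (\ref{intzero}) head-on: for each $x$ pick $y_*$ with $u(x,y_*,t_0)=0$ (possible since $\int_0^1u\,dy=0$) and evaluate the hypothesis there to get $P_x(\cdot,t_0)\equiv0$; then $u\bigl(u_x-\frac{\omega\omega_x}{\omega_y}\bigr)\equiv 0$, and since $u_{yy}=\omega_y>0$ forces $u(x,\cdot)$ to have at most two zeros, continuity gives $u_x-\frac{\omega\omega_x}{\omega_y}\equiv 0$ at $t_0$, after which Lemma \ref{stationary1} applies. Note, however, that this (like your plan) leans on the normalization (\ref{intzero}), whereas the paper's argument only uses the normalization-independent facts (\ref{intux}) and (\ref{intut}).
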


\begin{proof}

Assume that $P_x+u\left(u_x-\frac{\omega\omega_x}{\omega_y}\right)\equiv 0$ at time $t=t_0$. Then,

\begin{equation}\label{stationary2prop}
P_x+u\left(u_x-\frac{\omega\omega_x}{\omega_y}\right)= P_x+uu_x+\frac{v\omega\omega_y}{\omega_y}-\frac{\omega v\omega_y}{\omega_y}-\frac{\omega u\omega_x}{\omega_y}=\frac{\omega\omega_t}{\omega_y}-u_t.
\end{equation}
Furthermore,

\begin{equation}\label{stationary2derivative}
\partial_y\left(\frac{u_t}{\omega}\right)=\frac{\omega\omega_t-u_t\omega_y}{\omega^2}=\frac{\omega_y}{\omega^2}\left(\frac{\omega\omega_t}{\omega_y}-u_t\right)\equiv 0, \text{\hspace{0.1cm} if $\omega\neq 0$.}
\end{equation}
Let us first assume that $\omega$ does not vanishes. By (\ref{intut}), there exists $y_0\in[0,1]$ such that $u_t(x,y_0(x),t_0)=\frac{u_t(x,y_0(x),t_0)}{\omega(x,y_0(x),t_0)}=0$. By (\ref{stationary2derivative}), $u_t\equiv 0$ at time $t=t_0$. Moreover, $\omega(x,y,t_0)$ and $\omega(x,y,t_0+t)$ satisfy (\ref{VHEE}) with the same initial condition. Since the solution is unique, $\omega(x,y,t_0+t)\equiv \omega(x,y,t_0)$. Furthermore, by Corollary \ref{stationary}, $\omega$ is $x$-independent.

Conversely, assume that $\omega$ vanishes in some points. Applying (\ref{stationary2derivative}), we have
 \begin{equation} \label{piecewiseu2}
    u_t(x,y,t_0) =
    \begin{cases}
      f(x,t_0)\omega(x,y,t_0)+c(x), & if \hspace{0.3cm} \omega>0, \\
      \tilde{f}(x,t_0)\omega(x,y,t_0)+\tilde{c}(x),        & if \hspace{0.3cm}  \omega<0, \\
      0, & if \hspace{0.3cm}  \omega=0. \\
     
    \end{cases}
  \end{equation}
Since $u_t$ is continuous, $c\equiv\tilde{c}\equiv 0$. Moreover, 

\begin{equation}
    \omega_t =
    \begin{cases}
      f(x,t_0)\omega_y, & if \hspace{0.3cm} \omega>0, \\
      \tilde{f}(x,t_0)\omega_y,        & if \hspace{0.3cm}  \omega<0, \\
      0, & if \hspace{0.3cm}  \omega=0. \\
     
    \end{cases}
  \end{equation}
Therefore, the continuity of $\frac{\omega_t}{\omega_y}$ yields $f(x,t_0)\equiv\tilde{f}(x,t_0)$. Thus, $u_t=f(x,t_0)\omega$ and $\omega_t=f(x,t_0)\omega_y$. Integrating in $y\in [0,1]$, we get

\begin{equation}\label{stationary2ut}
\int_{0}^{1}u_tdy=0=f(x,t_0)\int_{0}^{1}\omega dy=f(x,t_0)u\rvert_{y=0}^{y=1},
\end{equation}

\begin{equation}\label{stationary2omegat}
\int_{0}^{1}\omega_tdy=f(x,t_0)\int_{0}^{1}\omega_y dy=\partial_t u\rvert_{y=0}^{y=1}=\frac{1}{2}\partial_x(u^2(0)-u^2(1)).
\end{equation}
Applying (\ref{stationary2ut}), (\ref{stationary2omegat}) and the positivity of $\omega_y$, we get

\begin{equation}\label{stationary2final}
\partial_t \left(\left\lvert u(1)-u(0)\right\rvert^2\right)=2u\rvert_{y=0}^{y=1}\partial_t u\rvert_{y=0}^{y=1}=2u\rvert_{y=0}^{y=1}\partial_x\left(u^2(0)-u^2(1)\right)\equiv 0.
\end{equation}
Multiplying by $u(1)+u(0)$, we obtain

\begin{equation}\label{stationary22final}
\partial_x\left(\left\lvert u^2(0)-u^2(1)\right\rvert^2\right)\equiv 0.
\end{equation}
Finally, if $f(x_0,t_0)\neq 0$, (\ref{stationary2ut}) implies $u(x_0,y,t_0)\rvert_{y=0}^{y=1}=0$. Moreover, by (\ref{stationary22final}) and (\ref{stationary2omegat}), $u(x,y,t_0)\rvert_{y=0}^{y=1}\equiv 0$, which contradicts  (\ref{stationary2omegat}).
Thus, $f\equiv u_t\equiv 0$ at time $t=t_0$. Furthermore, $\omega(x,y,t_0)$ and $\omega(x,y,t_0+t)$ satisfy (\ref{VHEE}) with the same initial data. Then, by uniqueness $\omega(x,y,t_0+t)\equiv \omega(x,y,t_0)$ (see \cite{MaTKW12}). Moreover, since $\omega$ is stationary, $\partial_t E_1=0$. Applying Lemma \ref{stationary1}, we deduce that $\omega$ is $x$-independent. This concludes the proof of Lemma \ref{stationary2}.

\end{proof}

\begin{lemma}\label{blowuplemma}

Let $f:[0,L)\rightarrow \mathbb{R}$ be a $C^1$ function and $C>0$. Assume that $\partial_t f\geq Cf^2$ and $f(0)>0$. Then there exists a positive time $T^{*}$ such that $f\rightarrow \infty$ as $t\rightarrow T^{*}$. Moreover, $T^{*}\leq \frac{1}{Cf(0)}$ and $f$ satisfies:

\begin{equation}
f(t)\geq \frac{1}{C}\frac{1}{\frac{1}{Cf(0)}-t}, 
\end{equation}

 for $t<T^*$.

\end{lemma}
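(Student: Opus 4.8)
The plan is to run the classical Riccati comparison argument for the scalar differential inequality. First I would note that $\partial_t f \geq C f^2 \geq 0$, so $f$ is nondecreasing on $[0,L)$ and therefore $f(t) \geq f(0) > 0$ for every $t \in [0,L)$; in particular $f$ never vanishes, and $1/f$ is a well-defined $C^1$ function on $[0,L)$.

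Next I would divide the inequality by $f^2 > 0$ to obtain
\[
-\frac{d}{dt}\!\left(\frac{1}{f}\right) = \frac{\partial_t f}{f^2} \geq C,
\qquad\text{i.e.}\qquad
\frac{d}{dt}\!\left(\frac{1}{f}\right) \leq -C .
\]
Integrating from $0$ to $t$ yields $\dfrac{1}{f(t)} \leq \dfrac{1}{f(0)} - Ct$. Since the left-hand side is strictly positive (as $f>0$), this immediately forces $\frac{1}{f(0)} - Ct > 0$ for every $t$ in the domain, hence $L \leq \frac{1}{Cf(0)}$; and on the interval where the right-hand side is positive, inverting gives exactly
\[
f(t) \geq \frac{1}{\frac{1}{f(0)} - Ct} = \frac{1}{C}\,\frac{1}{\frac{1}{Cf(0)} - t},
\]
which is the asserted lower bound.

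Finally, I would take $T^{*}$ to be the right endpoint of the (maximal) interval of existence of $f$: the lower bound just derived tends to $+\infty$ as $t \uparrow \frac{1}{Cf(0)}$, so $f$ cannot stay finite, hence $C^1$, up to that value; combined with $L \leq \frac{1}{Cf(0)}$ this yields $f(t) \to \infty$ as $t \to T^{*}$ and $T^{*} \leq \frac{1}{Cf(0)}$. I do not expect any serious obstacle here — the only point requiring a word of care is reading the endpoint $T^{*}$ as the genuine blow-up time, which is where one uses that $[0,L)$ is the maximal interval on which the $C^1$ function $f$ is defined (in the applications, the maximal interval of existence of the underlying solution); the analytic content is entirely the one-line integration above.
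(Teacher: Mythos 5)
Your proof is correct and follows essentially the same route as the paper: divide the inequality $\partial_t f\geq Cf^2$ by $f^2$, integrate to get $\frac{1}{f(t)}\leq\frac{1}{f(0)}-Ct$, and invert. The only difference is that you additionally justify the positivity of $f$ via monotonicity and spell out the reading of $T^{*}$ as the endpoint of the maximal interval, two small points the paper's proof takes for granted.
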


\begin{proof}

Since $f$ is positive we have:

$$\frac{\partial_s f}{f^2}\geq C.$$
Integrating the above we get:

$$\int_{0}^{t}\frac{f_s}{f^2}ds=\frac{1}{f(0)}-\frac{1}{f(t)}\geq C\int_{0}^{t}ds=Ct.$$
From which we obtain:

$$f(t)\geq \frac{1}{\frac{1}{f(0)}-Ct}, $$
which concludes the proof.
\end{proof}

\begin{proposition}\label{blowupE1}

Assume that $\omega_0\in H^4(\mathbb{T}\times (0,1))$ satisfies the local Rayleigh condition and $\partial_x\omega_0$ is not identically $0$. Let $\omega\in C([0,T];H^4(\mathbb{T}\times(0,1)))$ be the solution to (\ref{VHEE}) that satisfies the local Rayleigh condition in $[0,T]$. Then, $E_1$ satisfies:

\begin{equation}\label{blowupE1ineq}
E_1(t)\geq \frac{1}{\frac{1}{E_1(0)}-t}, \text{\hspace{1cm} if $E_1(0)>0,$}
\end{equation}
where $0\leq t\leq  T$.

Furthermore, if $E_1(0)=0$ there exists a positive time $t'$ (sufficiently small) such that $E_1(t')>0$ and 
\begin{equation}
E_1(t+t')\geq \frac{1}{\frac{1}{E_1(t')}-t}, 
\end{equation}

where $0\leq t\leq  T$.

\end{proposition}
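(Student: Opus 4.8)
The plan is to turn identity (\ref{formulas2}) into a Riccati-type differential inequality for $E_1$ via the Cauchy--Schwarz inequality and then to invoke Lemma \ref{blowuplemma}. Concretely, since $\lvert\mathbb{T}\times(0,1)\rvert=1$, applying Cauchy--Schwarz to the representation (\ref{E1}) of $E_1$ and then (\ref{formulas2}) gives
\[
E_1(t)^2=\left(\int_{\mathbb{T}\times(0,1)}\left(\frac{\omega\omega_x}{\omega_y}-u_x\right)dxdy\right)^{2}\leq\int_{\mathbb{T}\times(0,1)}\left\lvert\frac{\omega\omega_x}{\omega_y}-u_x\right\rvert^{2}dxdy=\partial_tE_1(t),
\]
so that $\partial_tE_1\geq E_1^2$ on $[0,T)$; moreover, since the right-hand side of (\ref{formulas2}) is nonnegative, $E_1$ is nondecreasing on $[0,T)$.

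For the first assertion, if $E_1(0)>0$ then $E_1(t)\geq E_1(0)>0$ on $[0,T)$, so Lemma \ref{blowuplemma} with $C=1$ and $f=E_1$ applies and yields $E_1(t)\geq\bigl(\tfrac1{E_1(0)}-t\bigr)^{-1}$ for $t\in[0,T)$. Because $\omega\in C([0,T];H^4(\mathbb{T}\times(0,1)))$ and the local Rayleigh condition holds on the closed interval $[0,T]$ (so $\omega_y$ is bounded below and $\omega,\omega_x$ are bounded there), $E_1$ extends continuously to $t=T$ and the inequality passes to the limit; in particular, finiteness of $E_1$ on $[0,T]$ forces $T<1/E_1(0)$, in agreement with the blow-up time bound of Lemma \ref{blowuplemma}.

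For the second assertion, suppose $E_1(0)=0$. I first claim that $E_1(t)>0$ for every $t\in(0,T)$. If not, choose $t_1\in(0,T)$ with $E_1(t_1)=0$; monotonicity together with $E_1(0)=0$ forces $E_1\equiv0$ on $[0,t_1]$, hence $\partial_tE_1\equiv0$ on $[0,t_1)$, i.e. $\int_{\mathbb{T}\times(0,1)}\lvert u_x-\tfrac{\omega\omega_x}{\omega_y}\rvert^{2}dxdy=0$ there. By Lemma \ref{stationary1} this forces $\omega$ to be stationary and independent of $x$, whence $\partial_x\omega_0\equiv0$, contradicting the hypothesis. Consequently any $t'\in(0,T)$ has $E_1(t')>0$, and applying the first assertion to the solution with initial datum $\omega(\cdot,t')$ (which still lies in $H^4(\mathbb{T}\times(0,1))$ and satisfies the local Rayleigh condition) yields $E_1(t+t')\geq\bigl(\tfrac1{E_1(t')}-t\bigr)^{-1}$ on the relevant time interval.

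The argument is short once (\ref{formulas2}) is available; the only mildly delicate points are the passage to the closed endpoint $t=T$ (handled by the continuity of $\omega$ in $H^4$ and the lower bound on $\omega_y$, which keep $E_1$ finite and continuous up to $T$) and the exclusion of the degenerate possibility $E_1\equiv0$ in the case $E_1(0)=0$, the latter being precisely where the hypothesis $\partial_x\omega_0\not\equiv0$ and Lemma \ref{stationary1} enter.
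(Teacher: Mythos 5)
Your proposal is correct and follows essentially the same route as the paper: the Riccati inequality $\partial_t E_1\geq E_1^2$ obtained from (\ref{formulas2}) via H\"older/Cauchy--Schwarz on the unit-measure domain, Lemma \ref{blowuplemma} for the lower bound, and Lemma \ref{stationary1} to rule out $E_1$ staying at zero when $E_1(0)=0$ before applying the time-shifted estimate at $t'$. The only differences are cosmetic (your contradiction framing of the $E_1(0)=0$ case versus the paper's direct positivity statement, and your explicit remark about continuity up to $t=T$), so no changes are needed.
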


\begin{proof}

By (\ref{formulas2}) and H\"older's inequality,

\begin{align}
\partial_t E_1 &= \int_{\mathbb{T}\times (0,1)} \left\lvert u_x-\frac{\omega\omega_x}{\omega_y}\right\rvert^2dxdy \nonumber \\
&\geq \left(  \int_{\mathbb{T}\times (0,1)} \left(u_x-\frac{\omega\omega_x}{\omega_y}\right)dxdy\right)^2 \nonumber \\
&=E_1^2(t). \label{quadraticgrowthE1}
\end{align}
If $E_1(0)>0$ the result follows by applying Lemma \ref{blowuplemma}. On the other hand, if $E_1(0)=0$, Lemma \ref{stationary1} implies

$$\int_{\mathbb{T}\times (0,1)}\left\lvert\frac{\omega\omega_x}{\omega_y}-u_x\right\rvert^2dxdy>0,$$
at every time. Therefore, for every positive time $t'$ we have
$$\int_{0}^{t'}\int_{\mathbb{T}\times (0,1)}\left\lvert\frac{\omega\omega_x}{\omega_y}-u_x\right\rvert^2dxdydt=\int_{0}^{t'}\partial_tE_1(t)dt=E_1(t')-E_1(0)=E_1(t')>0.$$
Choose $t'$ such that $\omega\in C([0,T+t'];H^4(\mathbb{T}\times(0,1)))$ satisfies the local Rayleigh condition in $[0,T+t']$. Applying (\ref{blowupE1ineq}) to $\omega(t+t')$ we get

$$ E_1(t'+t)\geq \frac{1}{\frac{1}{E_1(t')}-t},$$
for $0\leq t\leq T$, which completes the proof of Proposition \ref{blowupE1}.
\end{proof}

\begin{proposition}\label{blowupE2}

Assume that $\omega_0\in H^4(\mathbb{T}\times (0,1))$ satisfies the local Rayleigh condition and $\partial_x\omega_0$ is not identically $0$. Let $\omega\in C([0,T];H^4(\mathbb{T}\times(0,1)))$ be the solution to (\ref{VHEE}) that satisfies the local Rayleigh condition in $[0,T]$. Then, $E_2$ satisfies:

\begin{equation}\label{blowupE2ineq}
E_2(t)\geq \frac{\Vert u\Vert_{2}^{2}}{\frac{\Vert u\Vert_{2}^{2}}{E_2(0)}-t}, \text{\hspace{1cm} if $E_2(0)>0,$}
\end{equation}
where $0\leq t\leq T$. Furthermore, if $E_2(0)=0$ there exists a positive time $t'$ (sufficiently small) such that $E_2(t')>0$ and 
\begin{equation}
E_2(t'+t)\geq \frac{\Vert u\Vert_{2}^{2}}{\frac{\Vert u\Vert_{2}^{2}}{E_2(t')}-t}, 
\end{equation}
where $0\leq t\leq T$.

\end{proposition}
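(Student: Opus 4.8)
The plan is to mimic the proof of Proposition \ref{blowupE1}, replacing the use of \eqref{formulas2} with \eqref{formulas3} and introducing the correct weight coming from the conserved kinetic energy $\Vert u\Vert_2^2$. First I would write, using \eqref{formulas3} and the Cauchy--Schwarz inequality with the weight $u^2$,
\begin{align}
\partial_t E_2 &= \int_{\mathbb{T}\times(0,1)}\left\lvert P_x+u\left(u_x-\frac{\omega\omega_x}{\omega_y}\right)\right\rvert^2dxdy\nonumber\\
&= \int_{\mathbb{T}\times(0,1)}\left\lvert u\left(u_x-\frac{\omega\omega_x}{\omega_y}\right)-P_x\right\rvert^2dxdy\nonumber\\
&\geq \frac{1}{\Vert u\Vert_2^2}\left(\int_{\mathbb{T}\times(0,1)}u\left(u\left(u_x-\frac{\omega\omega_x}{\omega_y}\right)-P_x\right)dxdy\right)^2\nonumber\\
&= \frac{1}{\Vert u\Vert_2^2}E_2^2(t),\nonumber
\end{align}
where the last equality uses the definition \eqref{E2} of $E_2$ together with the identity $\int_{\mathbb{T}\times(0,1)}uP_x\,dxdy = \int_{\mathbb{T}\times(0,1)}\bigl(u^2\frac{\omega\omega_x}{\omega_y}-u^2 u_x\bigr)-E_2(t)\,dxdy$ obtained by comparing the two expressions for $E_2$; here $\Vert u\Vert_2^2=\int_{\mathbb{T}\times(0,1)}u^2\,dxdy$ is time-independent by \eqref{kinetic}, so the Cauchy--Schwarz step is legitimate with a fixed constant.

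Next, in the case $E_2(0)>0$, I would apply Lemma \ref{blowuplemma} to $f=E_2$ with constant $C=1/\Vert u\Vert_2^2$; note $\Vert u\Vert_2^2>0$ since otherwise $u\equiv 0$ forces $\partial_x\omega_0\equiv 0$ via \eqref{equ}, contradicting the hypothesis. This yields $T^{*}\leq \Vert u\Vert_2^2/E_2(0)$ and the claimed lower bound \eqref{blowupE2ineq}, valid on the whole interval $[0,T]$ of existence.

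For the case $E_2(0)=0$, I would argue exactly as in Proposition \ref{blowupE1}: since $\partial_x\omega_0$ is not identically $0$, Lemma \ref{stationary2} guarantees that $\int_{\mathbb{T}\times(0,1)}|P_x+u(u_x-\frac{\omega\omega_x}{\omega_y})|^2\,dxdy>0$ at every time (otherwise $\omega$ would be stationary and $x$-independent, whence $\partial_x\omega_0\equiv 0$). Hence for any positive $t'$,
\begin{align}
E_2(t') = E_2(t')-E_2(0) = \int_{0}^{t'}\partial_t E_2(t)\,dt = \int_{0}^{t'}\int_{\mathbb{T}\times(0,1)}\left\lvert P_x+u\left(u_x-\frac{\omega\omega_x}{\omega_y}\right)\right\rvert^2dxdy\,dt > 0.\nonumber
\end{align}
Choosing $t'$ small enough that the local well-posedness of \cite{MaTKW12} gives $\omega\in C([0,T+t'];H^4)$ still satisfying the local Rayleigh condition, I then apply the already-established inequality \eqref{blowupE2ineq} to the time-shifted solution $\omega(\cdot,\cdot,t+t')$ (whose conserved kinetic energy is the same $\Vert u\Vert_2^2$), obtaining $E_2(t'+t)\geq \Vert u\Vert_2^2/(\Vert u\Vert_2^2/E_2(t')-t)$ for $0\leq t\leq T$, which is the assertion.

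The only genuinely non-routine step is the bookkeeping that identifies the weighted integral $\int u\bigl(u(u_x-\frac{\omega\omega_x}{\omega_y})-P_x\bigr)\,dxdy$ with $E_2(t)$: one must be careful that the cross term with $P_x$ is exactly what the second expression for $E_2$ in \eqref{E2} absorbs, and that no boundary terms appear — this is where the $x$-periodicity and the identity \eqref{intux} (used as in Proposition \ref{formulas}) are implicitly needed. Everything else is a direct transcription of the $E_1$ argument with $\Vert u\Vert_2^2$ inserted as the Cauchy--Schwarz weight.
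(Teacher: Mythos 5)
Your proof follows essentially the same route as the paper: the weighted Cauchy--Schwarz inequality yielding $\partial_t E_2\geq \Vert u\Vert_2^{-2}E_2^2$ (with $\Vert u\Vert_2^2$ time-independent by \eqref{kinetic}), Lemma \ref{blowuplemma} for the case $E_2(0)>0$, and Lemma \ref{stationary2} plus a small time shift for the case $E_2(0)=0$. One intermediate step is incorrect as written, although it is harmless: the second line of your display asserts $\int\bigl\lvert P_x+u\bigl(u_x-\frac{\omega\omega_x}{\omega_y}\bigr)\bigr\rvert^2dxdy=\int\bigl\lvert u\bigl(u_x-\frac{\omega\omega_x}{\omega_y}\bigr)-P_x\bigr\rvert^2dxdy$, which is not an identity: the two integrands differ by $4P_x\,u\bigl(u_x-\frac{\omega\omega_x}{\omega_y}\bigr)$, and there is no reason for its integral to vanish (for instance $\int_{\mathbb{T}\times(0,1)}P_xuu_x\,dxdy=-\tfrac12\int_{\mathbb{T}}P_x^2\,dx$ by \eqref{defpressure}, which is generically nonzero). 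The fix is simply to delete that line and apply Cauchy--Schwarz with weight $u$ directly to $\int\bigl\lvert P_x+u\bigl(u_x-\frac{\omega\omega_x}{\omega_y}\bigr)\bigr\rvert^2dxdy$, exactly as the paper does: the linear functional $\int_{\mathbb{T}\times(0,1)}u\bigl(P_x+u\bigl(u_x-\frac{\omega\omega_x}{\omega_y}\bigr)\bigr)dxdy$ equals $-E_2(t)$, since $\int_{\mathbb{T}}\bigl(\int_0^1u\,dy\bigr)P_x\,dx=0$ by \eqref{intux} and $x$-periodicity and $\int_{\mathbb{T}\times(0,1)}u^2u_x\,dxdy=0$, so the same lower bound $E_2^2(t)$ results and the sign in front of $P_x$ is irrelevant only after this cancellation, not before squaring. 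With that correction, the remainder of your argument (positivity of $\Vert u\Vert_2$, the $E_2(0)=0$ case via $E_2(t')=\int_0^{t'}\partial_\tau E_2\,d\tau>0$, and applying \eqref{blowupE2ineq} to the shifted solution) coincides with the paper's proof.
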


\begin{proof}

By (\ref{formulas3}), H\"older's inequality, (\ref{intux}) and the $x$-periodicity:

\begin{align}
&\Vert u\Vert_{2}^2\cdot\partial_t E_2(t) \nonumber \\
&=\left(\int_{\mathbb{T}\times (0,1)}u^2dxdy\right)\cdot \int_{\mathbb{T}\times (0,1)} \left\lvert P_x+u\left(u_x-\frac{\omega\omega_x}{\omega_y}\right)\right\rvert^2dxdy \nonumber \\
&\geq \left(\int_{\mathbb{T}\times (0,1)}u\left( P_x+u\left(u_x-\frac{\omega\omega_x}{\omega_y} \right)\right)dxdy\right)^2 \nonumber \\
&= \left(\int_{\mathbb{T}}\left(\int_{0}^{1}udy\right)P_xdx+\int_{\mathbb{T}\times (0,1)}\partial_x\left(\frac{u^3}{3}\right)dxdy-\int_{\mathbb{T}\times (0,1)}u^2\frac{\omega\omega_x}{\omega_y}dxdy\right)^2 \nonumber \\
&= \left(\int_{\mathbb{T}\times (0,1)}u^2\frac{\omega\omega_x}{\omega_y}dxdy \right)^2 \nonumber \\
&=E_2^2(t).\label{quadraticgrowthE2}
\end{align}
Thus $$ \partial_t E_2\geq \Vert u\Vert_{2}^{-2}E_2^2, $$
where $\Vert u\Vert_{2}^2$ is time independent thanks to (\ref{kinetic}). If $E_2(0)>0$, Lemma \ref{blowuplemma} yields

$$ E_2(t)\geq \frac{\Vert u\Vert_{2}^{2}}{\frac{\Vert u\Vert_{2}^{2}}{E_2(0)}-t},$$
where $0\leq t\leq T$. On the other hand, if $E_2(0)=0$, Lemma \ref{stationary2} implies

$$\int_{\mathbb{T}\times (0,1)} \left\lvert P_x+u\left(u_x-\frac{\omega\omega_x}{\omega_y}\right)\right\rvert^2dxdy>0,$$
at every time, from which we get

$$ \int_{0}^{t'}\int_{\mathbb{T}\times (0,1)} \left\lvert P_x+u\left(u_x-\frac{\omega\omega_x}{\omega_y}\right)\right\rvert^2dxdyd\tau=\int_{0}^{t'}\partial_{\tau} E_2d\tau=E_2(t')>0,$$
for every $t'>0$. Choose $t'$ such that $\omega\in C([0,T+t'];H^4(\mathbb{T}\times(0,1)))$ satisfies the local Rayleigh condition in $[0,T+t']$. Applying (\ref{blowupE2ineq}) to $\omega(t+t')$ we get

$$ E_2(t'+t)\geq \frac{\Vert u\Vert_{2}^{2}}{\frac{\Vert u\Vert_{2}^{2}}{E_2(t')}-t},$$
for $0\leq t\leq T$, which concludes the proof of Proposition \ref{blowupE1}.

\end{proof}

\section{Proof of Theorem \ref{collapse}: Collapse of the local rayleigh condition or singularity formation}\label{proofcollapse}

\begin{proof}
Let us proceed by contradiction. Suppose that  $\Vert \omega (t)\Vert_{H^{4}(\mathbb{T}\times(0,1))}$ and $\left\Vert\frac{1}{\omega_y (t)} \right\Vert_{L^{\infty}(\mathbb{T}\times(0,1))}$ remain bounded for every time $t>0$. Assume first that $E_1(0)\geq 0$. By Proposition \ref{blowupE1}, $E_1$ blows up in finite time. Moreover, by (\ref{formulas2}) and Poincare's inequality
\begin{align}
& E_1(\tau)\rvert_{\tau=0}^{\tau=t} \nonumber \\
&=\int_{0}^{t}\int_{\mathbb{T}\times (0,1)} \left\lvert u_x-\frac{\omega\omega_x}{\omega_y}\right\rvert^2dxdyd\tau\nonumber \\
&\leq 2\int_{0}^{t}\int_{\mathbb{T}\times (0,1)}u_x^2+\left(\frac{\omega\omega_x}{\omega_y}\right)^2dxdyd\tau \nonumber \\
& \leq 2\int_{0}^{t}\int_{\mathbb{T}\times (0,1)}\left(\frac{\omega_x^2}{\pi^2}+\Vert\omega_0\Vert_{\infty}^2\left(\frac{\omega_x}{\omega_y}\right)^2\right)dxdyd\tau \nonumber \\
& \leq \left(\frac{2}{\pi^2}+2\Vert\omega_0\Vert_{\infty}^2\right)\int_{0}^{t}\int_{\mathbb{T}\times (0,1)}\omega_x^2\left(1+\frac{1}{\omega_y^2}\right)dxdyd\tau \label{collapseE12}\\
& \leq \left(\frac{2}{\pi^2}+2\Vert\omega_0\Vert_{\infty}^2\right)\int_{0}^{t}\Vert \omega_x (\tau)\Vert_{L^{2}(\mathbb{T}\times(0,1))}^2\left(1+\left\Vert\frac{1}{\omega_y (\tau)} \right\Vert_{L^{\infty}(\mathbb{T}\times(0,1))}^2\right)d\tau , \nonumber
\end{align}
which yields a contradiction because $t$ is arbritrary and $E_1$ blows up in finite time. Next, if $E_2(0)\geq 0$, by Proposition \ref{blowupE2}, $E_2$ blows up in finite time. Furthermore, by (\ref{formulas3})

\begin{align}
E_2(\tau)\rvert_{\tau=0}^{\tau=t} &=\int_{0}^{t}\int_{\mathbb{T}\times (0,1)} \left\lvert P_x+u\left(u_x-\frac{\omega\omega_x}{\omega_y}\right)\right\rvert^2dxdyd\tau \nonumber \\
&\leq 2\int_{0}^{t}\int_{\mathbb{T}\times (0,1)}\left(P_x^2+ u^2\left\lvert u_x-\frac{\omega\omega_x}{\omega_y}\right\rvert^2\right)dxdyd\tau. \nonumber
\end{align}
By (\ref{defpressure}), H\"older's inequality, Poincare's inequality and (\ref{uinfty})

\begin{align}
\int_{\mathbb{T}}P_x^2dx &=4\int_{\mathbb{T}}\left(\int_{0}^{1}uu_xdy\right)^2dx\nonumber \\
&\leq 4\int_{\mathbb{T}}\int_{0}^{1}u^2dy\int_{0}^{1}u_x^2dydx \nonumber \\
&\leq 4\left(\frac{3}{2}\Vert \omega_0\Vert_{\infty}\right)^2\frac{1}{\pi^2}\int_{\mathbb{T}\times (0,1)}\omega_x^2dxdy\nonumber \\
&=\left(\frac{3}{\pi}\Vert\omega_0\Vert_{\infty}\right)^2\int_{\mathbb{T}\times (0,1)}\omega_x^2dxdy. \nonumber
\end{align}
Applying (\ref{collapseE12}) and (\ref{uinfty}), we get
\begin{align}
& E_2(\tau)\rvert_{\tau=0}^{\tau=t} \nonumber \\
&\leq \left(2\left(\frac{3}{\pi}\Vert\omega\Vert_{\infty}\right)^2+2\left(\frac{2}{\pi^2}+2\Vert\omega_0\Vert_{\infty}^2\right)\right)\int_{0}^{t}\int_{\mathbb{T}\times (0,1)}\omega_x^2\left(1+\frac{1}{\omega_y^2}\right)dxdyd\tau \label{collapseE22} \\
&\leq   \left( 6\Vert\omega_0\Vert_{\infty}+\frac{4}{\pi^2}\right)\int_{0}^{t}\Vert \omega_x (\tau)\Vert_{L^{2}(\mathbb{T}\times(0,1))}^2\left(1+\left\Vert\frac{1}{\omega_y (\tau)} \right\Vert_{L^{\infty}(\mathbb{T}\times(0,1))}^2\right)d\tau  , \nonumber
\end{align}
which again yields a contradiction because $t$ is arbritrary and $E_2$ blows up in finite time. From now on, we will assume that $\Vert \omega (t)\Vert_{H^{4}(\mathbb{T}\times(0,1))}$ and $\Vert\frac{1}{\omega_y (t)} \Vert_{L^{\infty}(\mathbb{T}\times(0,1))}$ remain bounded, so we can apply the local well-posedness result by Masmoudi and Wong \cite{MaTKW12}. First let us prove (\ref{collapselogE1}). Assume that $E_1(0)>0$. By Proposition \ref{blowupE1},

$$\frac{1}{\frac{1}{E_1(0)}-\tau}\leq E_1(\tau).$$
Applying (\ref{formulas1}) and integrating in $\tau\in[0,t]$, we get:
\begin{align}
\log\left(\frac{1}{E_1(0)}\frac{1}{\frac{1}{E_1(0)}-t}\right) & =\int_{0}^{t}\frac{1}{\frac{1}{E_1(0)}-\tau}d\tau \nonumber \\
&\leq \int_{0}^{t}E_1(\tau)d\tau \nonumber \\
& =\int_{0}^{t}\partial_{\tau}\int_{\mathbb{T}\times (0,1)}\log\left(\frac{1}{\omega_y(\tau)}\right)dxdyd\tau \nonumber \\
&=\int_{\mathbb{T}\times (0,1)}\log\left(\frac{\partial_y\omega_0}{\omega_y(t)}\right)dxdy,\nonumber
\end{align}
which completes the proof of (\ref{collapselogE1}). Let us now deduce the estimate (\ref{collapselogE2}). Assume that $E_2(0)>0$. By Proposition \ref{blowupE2}, 

\begin{equation}\label{collapseE2blowupE2}
\left(\frac{\Vert u\Vert_{2}^{2}}{\frac{\Vert u\Vert_{2}^{2}}{E_2(0)}-t}\right)^2\leq E_2^2(t).
\end{equation}
Moreover, by H\"older's inequality and (\ref{formulas2}),

\begin{align}
E_2^2(t) &=\left( \int_{\mathbb{T}\times(0,1)}u^2\left(\frac{\omega\omega_x}{\omega_y}-u_x\right)dxdy\right)^2\nonumber \\
&\leq \Vert u\Vert_4^4\int_{\mathbb{T}\times(0,1)}\left\lvert \frac{\omega\omega_x}{\omega_y}-u_x  \right\rvert^2dxdy \nonumber \\
&=\Vert u\Vert_4^4\partial_t E_1. \label{collapseE2blowupE22}
\end{align}
Therefore, by (\ref{collapseE2blowupE2}), (\ref{collapseE2blowupE22}) and (\ref{uinfty}),

\begin{align}
\frac{\Vert u\Vert_{2}^{4}}{\frac{\Vert u\Vert_{2}^{2}}{E_2(0)}-t}-E_2(0)\Vert u\Vert_{2}^{2}& =\int_{0}^{t}  \left(\frac{\Vert u\Vert_{2}^{2}}{\frac{\Vert u\Vert_{2}^{2}}{E_2(0)}-\tau}\right)^2 d\tau \nonumber \\
&\leq \int_{0}^{t}  E_2^2(\tau)d\tau \nonumber \\
&\leq \int_{0}^{t}\Vert u\Vert_{4}^{4}\partial_{\tau}E_1(\tau)d\tau \nonumber \\
&\leq \left(\frac{3}{2}\Vert\omega_0\Vert_{\infty}\right)^4(E_1(t)-E_1(0)) \nonumber \\
&\leq \left(\frac{3}{2}\Vert\omega_0\Vert_{\infty}\right)^4(E_1(t)+\lvert E_1(0)\rvert). \nonumber
\end{align}
Thus

$$ \frac{1}{\frac{\Vert u\Vert_{2}^{2}}{E_2(0)}-t}\leq \left(\frac{E_2(0)}{\Vert u\Vert_2^2}+\left( \frac{3}{2}\frac{\Vert\omega_0\Vert_{\infty}}{\Vert u\Vert_{2}}\right)^4\lvert E_1(0)\rvert\right)+\left( \frac{3}{2}\frac{\Vert\omega_0\Vert_{\infty}}{\Vert u\Vert_{2}}\right)^4E_1(t).  $$
Integrating in time, we get

$$\log \left(\frac{\Vert u\Vert_2^2}{E_2(0)}\frac{1}{\frac{\Vert u\Vert_2^2}{E_2(0)}-t}\right)\leq tC_1(\omega_0)+ C_2(\omega_0)\int_{\mathbb{T}\times (0,1)}\log\left(\frac{\partial_y\omega_0}{\omega_y(t)}\right)dxdy, $$
which concludes the proof of (\ref{collapselogE2}). Finally, Propositions \ref{blowupE1} and \ref{blowupE2}, (\ref{collapseE12})  and (\ref{collapseE22}) lead to (\ref{collapseE1}) and (\ref{collapseE2}). This completes the proof of Theorem \ref{collapse}.

\end{proof}

\section{Proof of Theorem \ref{properties}: Necessary conditions for global solvability}\label{proofproperties}

In this section we will prove Theorem \ref{properties}, which establishes necessary conditions for the global solvability of (\ref{HEE}). The following proposition will be useful for proving Theorem \ref{properties}.

\begin{proposition}\label{pxl2}

Assume that $\omega_0\in H^4(\mathbb{T}\times (0,1))$ satisfies the local Rayleigh condition. Let $\omega\in C([0,T];H^4(\mathbb{T}\times(0,1)))$ be the solution to (\ref{VHEE}) that satisfies the local Rayleigh condition in $[0,T]$. Let $E_1$ and $E_2$ be defined by (\ref{defE1}) and (\ref{defE2}) respectively. Then, we have:

\begin{equation}\label{pxl2eq}
\int_{0}^{T}\int_{\mathbb{T}}P_x^2dxdt\leq 2E_2\rvert_{t=0}^{t=T}+\frac{9}{2}\Vert \omega_0\Vert_{\infty}^2E_1\rvert_{t=0}^{t=T}.
\end{equation}

\end{proposition}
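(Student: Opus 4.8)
The plan is to combine the two monotonicity identities of Proposition \ref{formulas} with an elementary pointwise inequality and the $L^\infty$ bound (\ref{uinfty}). First I would observe that since $P_y=0$, the quantity $P_x$ depends only on $x$ and $t$, so $\int_{\mathbb{T}}P_x^2\,dx=\int_{\mathbb{T}\times(0,1)}P_x^2\,dx\,dy$; this lets me work on the full domain and freely invoke the formulas for $E_1$ and $E_2$.

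Next I would write $P_x$ as the difference
\begin{equation*}
P_x=\left(P_x+u\left(u_x-\frac{\omega\omega_x}{\omega_y}\right)\right)-u\left(u_x-\frac{\omega\omega_x}{\omega_y}\right),
\end{equation*}
and apply $(a-b)^2\le 2a^2+2b^2$ pointwise to get
\begin{equation*}
P_x^2\le 2\left\lvert P_x+u\left(u_x-\frac{\omega\omega_x}{\omega_y}\right)\right\rvert^2+2u^2\left\lvert u_x-\frac{\omega\omega_x}{\omega_y}\right\rvert^2.
\end{equation*}
Then I integrate over $\mathbb{T}\times(0,1)$ and over $t\in[0,T]$. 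For the first term on the right, integrating (\ref{formulas3}) in time gives exactly $2E_2\rvert_{t=0}^{t=T}$. For the second term, I bound $u^2\le\left(\tfrac32\Vert\omega_0\Vert_\infty\right)^2=\tfrac94\Vert\omega_0\Vert_\infty^2$ using (\ref{uinfty}), pull the constant out, and recognize the remaining integral as $E_1\rvert_{t=0}^{t=T}$ by integrating (\ref{formulas2}) in time; this produces $\tfrac92\Vert\omega_0\Vert_\infty^2\,E_1\rvert_{t=0}^{t=T}$. Adding the two contributions yields (\ref{pxl2eq}).

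There is no real obstacle here: the only points requiring a word of justification are the $y$-independence of $P_x$ (so that inserting $\int_0^1 dy$ is free and the $E_i$-formulas apply verbatim) and the fact that Proposition \ref{formulas} is valid on $[0,T)$ with the integrals extending to $T$ by the continuity $\omega\in C([0,T];H^4)$. Everything else is the triangle inequality and the a priori sup bound on $u$.
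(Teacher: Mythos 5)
Your proposal is correct and follows essentially the same route as the paper: the identical decomposition $P_x=\left(P_x+u\left(u_x-\frac{\omega\omega_x}{\omega_y}\right)\right)-u\left(u_x-\frac{\omega\omega_x}{\omega_y}\right)$, the elementary inequality $(a-b)^2\le 2a^2+2b^2$, and then the time-integrated identities (\ref{formulas3}), (\ref{formulas2}) together with the bound (\ref{uinfty}). Your explicit remark that $P_y=0$ makes the insertion of $\int_0^1 dy$ harmless is a small clarification the paper leaves implicit, but the argument is the same.
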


\begin{proof}
By (\ref{formulas3}), (\ref{formulas2}) and (\ref{uinfty})

\begin{align}
 \int_{0}^{T}\int_{\mathbb{T}}P_x^2dxdt&=\int_{0}^{T}\int_{\mathbb{T}\times (0,1)}\left\lvert P_x +u\left(u_x-\frac{\omega\omega_x}{\omega_y}\right)+u\left(\frac{\omega\omega_x}{\omega_y}-u_x\right)\right\rvert^2dxdydt \nonumber \\
 &\leq 2\int_{0}^{T}\int_{\mathbb{T}\times (0,1)}\left\lvert P_x +u\left(u_x-\frac{\omega\omega_x}{\omega_y}\right)\right\rvert^2+\left\lvert u\left(\frac{\omega\omega_x}{\omega_y}-u_x\right)\right\rvert^2dxdydt \nonumber \\
 &\leq 2\int_{0}^{T}\partial_t E_2(t)dt+2\left(\frac{3}{2}\Vert\omega_0\Vert_{\infty}\right)^2\int_{0}^{T}\int_{\mathbb{T}\times (0,1)}\left\lvert \frac{\omega\omega_x}{\omega_y}-u_x\right\rvert^2dxdydt \nonumber \\
 &=2\int_{0}^{T}\partial_t E_2(t)dt+2\left(\frac{3}{2}\Vert\omega_0\Vert_{\infty}\right)^2\int_{0}^{T}\partial_t E_1(t)dt \nonumber \\
 &=2E_2\rvert_{t=0}^{t=T}+\frac{9}{2}\Vert\omega_0\Vert_{\infty}E_1\rvert_{t=0}^{t=T},\nonumber
 \end{align}
which concludes the proof of Proposition \ref{pxl2}.

\end{proof}

Now we are ready to prove Theorem \ref{properties}.

\begin{proof}[Proof of Theorem \ref{properties}]

By Theorem \ref{collapse}, $E_1(t)$ and $E_2(t)$ remain negative, for all $t>0$. Moreover, by (\ref{formulas2})and  (\ref{formulas3}), $E_1(t)$ and $E_2(t)$ are convergent (since they are monotone and bounded). Furthermore, by (\ref{quadraticgrowthE1}) and (\ref{quadraticgrowthE2}),

\begin{equation}\label{intgrowthE1}
\int_{0}^{t}E_1^2d\tau\leq E_1\rvert_{\tau=0}^{\tau=t}.
\end{equation}

\begin{equation}\label{intgrowthE2}
\int_{0}^{t}E_2^2d\tau\leq \Vert u\Vert_{2}^2E_2\rvert_{\tau=0}^{\tau=t}.
\end{equation}
Which implies that $E_1(t),E_2(t)\rightarrow 0$, as $t\rightarrow \infty$ (otherwise (\ref{intgrowthE1}) and (\ref{intgrowthE2}) would growth arbitrarily, which is a contradiction). Thus,

\begin{equation}\label{intgrowthE11}
E_1\rvert_{\tau=0}^{\tau=t}= \int_{0}^{t}\int_{\mathbb{T}\times (0,1)} \left\lvert \frac{\omega\omega_x}{\omega_y}-u_x\right\rvert^2dxdyd\tau\rightarrow -E_1(0), \text{\hspace{0.1cm} as $t\rightarrow \infty$}
\end{equation}

and
\begin{equation}\label{intgrowthE22}
E_2\rvert_{\tau=0}^{\tau=t}= \int_{0}^{t}\int_{\mathbb{T}\times (0,1)} \left\lvert P_x+u\left(u_x-\frac{\omega\omega_x}{\omega_y}\right)\right\rvert^2dxdyd\tau\rightarrow -E_2(0), \text{\hspace{0.1cm} as $t\rightarrow \infty$},
\end{equation}
which proves (\ref{properties1}) and (\ref{properties2}). Next, Proposition \ref{pxl2}, (\ref{intgrowthE11}) and (\ref{intgrowthE22}) lead to (\ref{properties3}). Finally, by Jensen's inequality

\begin{align}
\exp\left(\int_{\mathbb{T}\times (0,1)}\log\left(\omega_y(t)\right)dxdy\right)&\leq \int_{\mathbb{T}\times (0,1)}\omega_ydxdy \nonumber \\
&\leq 2\Vert\omega_0\Vert_{\infty}.\label{propertiesjensen}
\end{align}
Thus, (\ref{formulas1}) and (\ref{propertiesjensen}) implies

\begin{align}
\int_{0}^{T}\lvert E_1(t)\rvert dt&=-\int_{0}^{T}E_1(t)dt \nonumber \\
&=\int_{\mathbb{T}\times (0,1)}\log\left(\frac{\omega_y(T)}{\partial_y\omega_0}\right)dxdy \nonumber \\
&\leq \int_{\mathbb{T}\times (0,1)}\log\left(\frac{2\Vert\omega_0\Vert_{\infty}}{\partial_y\omega_0}\right)dxdy,\nonumber
\end{align}
which gives (\ref{properties4}) because $T$ is arbitrary. This completes the proof of Theorem \ref{properties}.

\end{proof}

\section{Propagation of the local Rayleigh condition and $H^1$ control}\label{propagation}

It is worth mentioning that Theorem \ref{collapse} does not give any information about what happens first; the collapse of the local Rayleigh condition or the formation of singularities, even if the lower bounds (\ref{collapselogE1}) and (\ref{collapselogE2}) hold, Theorem \ref{collapse} does not guarantees that $\Vert\omega\Vert_{H^4(\mathbb{T}\times (0,1))}$ remains bounded before the collapse of the local Rayleigh condition. Nevertheless, if we assume that the vorticity $\omega$ does not vanishes, then we can control the $L^{\infty}([0,T];H^1(\mathbb{T}\times (0,1)))$ norm of $\omega$ and the $L^{\infty}([0,T];L^{\infty}(\mathbb{T}\times (0,1)))$   norm of $\frac{1}{\omega_y}$ by means of  
\begin{equation}
M(T)=\int_{0}^{T}\left\lvert\frac{\omega\omega_x}{\omega_y}-u_x \right\rvert_{\infty}d\tau,
\end{equation}
which is the main result of this section. The next proposition will be useful for proving Theorem \ref{continuationcriteria}.

\begin{proposition}\label{omegaxl2prop}
Assume that $\omega_0\in H^4(\mathbb{T}\times (0,1))$ satisfies the local Rayleigh condition. Let $\omega\in C([0,T];H^4(\mathbb{T}\times(0,1)))$ be the solution to (\ref{VHEE}) that satisfies the local Rayleigh condition in $[0,T]$. Assume further that $\lvert\omega_0\rvert >0$ in $\mathbb{T}\times(0,1)$. Then, we have:
\begin{align}
\lvert u_x\rvert_{\infty}&\leq \left(\frac{\Vert \omega_0 \Vert_{\infty}}{\min \lvert\omega_0\rvert}+1\right)\left\lvert \frac{\omega\omega_x}{\omega_y}-u_x\right\rvert_{\infty},\label{uxinfty} \\
\int_{\mathbb{T}\times(0,1)}\frac{\omega_{x}^2}{\omega_y}dxdy\rvert_{t=T}&\leq \int_{\mathbb{T}\times(0,1)}\frac{\lvert\partial_x\omega_{0\rvert}^2}{\partial_y\omega_0}dxdy\exp\left(C(\omega_0)\int_{0}^{T}\left\lvert\frac{\omega\omega_x}{\omega_y}-u_x\right\rvert_{\infty}dt \right),\label{omegaxl2bound} 
\end{align}
where $C(\omega_0)=3+\frac{2\Vert \omega_0 \Vert_{\infty}}{\min \lvert\omega_0\rvert}$.
\end{proposition}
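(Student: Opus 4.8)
The plan is to establish the pointwise bound (\ref{uxinfty}) first and then feed it into a Gr\"onwall inequality for $I(t):=\int_{\mathbb{T}\times(0,1)}\frac{\omega_x^2}{\omega_y}\,dxdy$, which yields (\ref{omegaxl2bound}). Write $g:=\frac{\omega\omega_x}{\omega_y}-u_x$ for brevity. A preliminary remark: since $D_t\omega=0$ by (\ref{materialomega}), the vorticity is transported by the incompressible flow of $(u,v)$ (which leaves $\mathbb{T}\times[0,1]$ invariant because $v\rvert_{y=0,1}=0$), so the range of $\omega(\cdot,t)$ coincides with that of $\omega_0$ for all $t\in[0,T]$; in particular $\omega$ never vanishes, has constant sign, and $\Vert\omega(t)\Vert_\infty=\Vert\omega_0\Vert_\infty$, $\min\lvert\omega(t)\rvert=\min\lvert\omega_0\rvert$.

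For (\ref{uxinfty}): since $\omega_x=u_{xy}$, we have the identity $\partial_y\left(\frac{u_x}{\omega}\right)=\frac{\omega\omega_x-u_x\omega_y}{\omega^2}=\frac{\omega_y}{\omega^2}\,g$ (this is (\ref{stationary1derivative})). By (\ref{intux}) and continuity in $y$, for each $x$ and each $t$ there is a point $y_0=y_0(x,t)\in[0,1]$ with $u_x(x,y_0,t)=0$, hence $\frac{u_x}{\omega}(x,y_0,t)=0$. Integrating the identity from $y_0$ to $y$ and using $\frac{\omega_y}{\omega^2}>0$ to pull $\lvert g\rvert_\infty$ out with the correct sign gives
$$\left\lvert\frac{u_x}{\omega}(x,y,t)\right\rvert\leq\lvert g\rvert_\infty\left\lvert\frac{1}{\omega(x,y,t)}-\frac{1}{\omega(x,y_0,t)}\right\rvert,$$
so multiplying by $\lvert\omega(x,y,t)\rvert$ yields $\lvert u_x(x,y,t)\rvert\leq\lvert g\rvert_\infty\left\lvert 1-\frac{\omega(x,y,t)}{\omega(x,y_0,t)}\right\rvert\leq\lvert g\rvert_\infty\left(1+\frac{\Vert\omega_0\Vert_\infty}{\min\lvert\omega_0\rvert}\right)$, which is (\ref{uxinfty}).

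For (\ref{omegaxl2bound}): using (\ref{materialint}), the product and quotient rules of Lemma \ref{materialderivative}, and the material derivatives (\ref{materialomegax}), (\ref{materialomegay}), a direct computation gives
$$D_t\left(\frac{\omega_x^2}{\omega_y}\right)=-3u_x\frac{\omega_x^2}{\omega_y}-2v_x\omega_x+\omega\frac{\omega_x^3}{\omega_y^2}=-2u_x\frac{\omega_x^2}{\omega_y}+g\,\frac{\omega_x^2}{\omega_y}-2v_x\omega_x,$$
where in the last equality we used $\omega\frac{\omega_x^3}{\omega_y^2}=\left(u_x+g\right)\frac{\omega_x^2}{\omega_y}$. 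The crucial point is that $\int_{\mathbb{T}\times(0,1)}v_x\omega_x\,dxdy=0$: writing $\omega_x=u_{xy}$, integrating by parts in $y$ (the boundary terms vanish since $v\rvert_{y=0,1}=0$ forces $v_x\rvert_{y=0,1}=0$), substituting $v_{xy}=-u_{xx}$ from incompressibility, and using $\int_{\mathbb{T}}\partial_x(u_x^2)\,dx=0$, the whole integral collapses to $0$. Hence, by (\ref{materialint}),
$$\partial_t I=\int_{\mathbb{T}\times(0,1)}\left(-2u_x\frac{\omega_x^2}{\omega_y}+g\,\frac{\omega_x^2}{\omega_y}\right)dxdy,$$
so that $\partial_t I\leq\left(2\lvert u_x\rvert_\infty+\lvert g\rvert_\infty\right)I\leq\left(3+\frac{2\Vert\omega_0\Vert_\infty}{\min\lvert\omega_0\rvert}\right)\lvert g\rvert_\infty\,I=C(\omega_0)\lvert g\rvert_\infty\,I$ by (\ref{uxinfty}). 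Gr\"onwall's inequality then yields $I(T)\leq I(0)\exp\left(C(\omega_0)\int_0^T\lvert g\rvert_\infty\,dt\right)$, which is exactly (\ref{omegaxl2bound}).

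The algebra in the $D_t$ computation is routine; the one genuinely non-mechanical step is recognizing that $\int_{\mathbb{T}\times(0,1)}v_x\omega_x\,dxdy=0$, since this is precisely what prevents a loss of one derivative on $v$ and turns the evolution of $I$ into a closed linear differential inequality. The only other delicate point is the bookkeeping of $\min\lvert\omega_0\rvert$, which hinges on the (standard) fact that the transport equation $D_t\omega=0$ freezes the range of $\omega$. Concretely, I would first carry out the integration by parts for $\int v_x\omega_x\,dxdy$, then the $D_t$ computation, and finally assemble the differential inequality and apply Gr\"onwall.
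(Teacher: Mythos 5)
Your proposal is correct and follows essentially the same route as the paper: the pointwise bound for $u_x$ via integrating $\partial_y(u_x/\omega)$ from a zero $y_0(x)$ of $u_x$, the computation of $D_t(\omega_x^2/\omega_y)$, the cancellation $\int_{\mathbb{T}\times(0,1)}v_x\omega_x\,dxdy=\int_{\mathbb{T}\times(0,1)}u_xu_{xx}\,dxdy=0$, and Gr\"onwall. The only difference is that you spell out two steps the paper leaves implicit (the conservation of $\Vert\omega(t)\Vert_\infty$ and $\min\lvert\omega(t)\rvert$ by transport, and the integration by parts behind the cancellation), which is a welcome clarification but not a different argument.
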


\begin{proof}

Let $x$ be any point in $\mathbb{T}$. By (\ref{intux}), there exists $y_0(x)\in[0,1]$ such that  $u_x(x,y_0(x),t)=0$. The positivity of $\lvert \omega\rvert$ yields
\begin{align}
\frac{u_x}{\omega}(x,\tilde{y},t)&=\int_{y_0(x)}^{\tilde{y}}\frac{\omega\omega_x-u_x\omega_y}{\omega^2}dy \nonumber \\ &=\int_{y_0(x)}^{\tilde{y}}\frac{\omega\omega_x-u_x\omega_y}{\omega_y}\frac{\omega_y}{\omega^2}dy,\nonumber
\end{align}
from which we get:

\begin{align}
 \lvert u_x\rvert(x,\tilde{y},t)&\leq \lvert\omega\rvert(x,\tilde{y},t)\left\lvert  \frac{\omega\omega_x-u_x\omega_y}{\omega_y}  \right\rvert_{\infty}\int_{y_0(x)}^{\tilde{y}}\frac{\omega_y}{\omega^2}dy \nonumber \\
&= \left\lvert  \frac{\omega\omega_x-u_x\omega_y}{\omega_y}  \right\rvert_{\infty}\left\lvert\frac{\omega(x,\tilde{y},t)}{\omega(x,y_0(x),t)}-1 \right\rvert\nonumber \\
&\leq \left(\frac{\Vert \omega_0 \Vert_{\infty}}{\min \lvert\omega_0\rvert}+1\right)\left\lvert  \frac{\omega\omega_x-u_x\omega_y}{\omega_y}  \right\rvert_{\infty},  \nonumber 
\end{align}
for every $\tilde{y}\in [0,1]$, which leads to (\ref{uxinfty}).
On the other hand, by Lemmas \ref{materialderivative} and \ref{lemmamaterialomega}:
\begin{align}
D_t\left(\frac{\omega_x^2}{\omega_y}\right)&=\frac{-2u_x\omega_x^2\omega_y-2\omega_xv_x\omega_y^2-u_x\omega_x^2\omega_y+\omega\omega_x^3}{\omega_y^2} \nonumber \\
&=\left(\frac{\omega\omega_x}{\omega_y}-3u_x\right)\frac{\omega_x^2}{\omega_y}-2\omega_xv_x,\nonumber
\end{align}
integrating in $\mathbb{T}\times (0,1)$, we get
\begin{equation}\label{omegaxl2}
\partial_t\int_{\mathbb{T}\times(0,1)}\frac{\omega_{x}^2}{\omega_y}dxdy =\int_{\mathbb{T}\times(0,1)}\frac{\omega_{x}^2}{\omega_y}\left(\frac{\omega\omega_x}{\omega_y}-3u_x\right)dxdy,
\end{equation}
where we have used that
$$\int_{\mathbb{T}\times (0,1)}\omega_xv_xdxdy=\int_{\mathbb{T}\times (0,1)}u_xu_{xx}dxdy=0,$$
thanks to integration by parts and the $x$-periodicity. Thus, by (\ref{omegaxl2}) and (\ref{uxinfty})
\begin{align}
\partial_t\int_{\mathbb{T}\times(0,1)}\frac{\omega_{x}^2}{\omega_y}dxdy &=\int_{\mathbb{T}\times(0,1)}\frac{\omega_{x}^2}{\omega_y}\left(\frac{\omega\omega_x}{\omega_y}-3u_x\right)dxdy \nonumber \\
&\leq  \left(\left\lvert\frac{\omega\omega_x-u_x\omega_y}{\omega_y}  \right\rvert_{\infty}+ 2 \left\lvert u_x  \right\rvert_{\infty}\right)\int_{\mathbb{T}\times(0,1)}\frac{\omega_{x}^2}{\omega_y}dxdy\nonumber \\
& \leq  \left(3+\frac{2\Vert \omega_0 \Vert_{\infty}}{\min \lvert\omega_0\rvert}\right)\left\lvert\frac{\omega\omega_x-u_x\omega_y}{\omega_y}  \right\rvert_{\infty}\int_{\mathbb{T}\times(0,1)}\frac{\omega_{x}^2}{\omega_y}dxdy,\nonumber
\end{align}
By applying Gr\"onwall's inequality, we get (\ref{omegaxl2bound}), which completes the proof of Proposition \ref{omegaxl2prop}.

\end{proof}

Let us state the main result of this section.
\begin{theorem}\label{continuationcriteria}
Assume that $\omega_0\in H^4(\mathbb{T}\times (0,1))$ satisfies the local Rayleigh condition. Let $\omega\in C([0,T];H^4(\mathbb{T}\times(0,1)))$ be the solution to (\ref{VHEE}) that satisfies the local Rayleigh condition in $[0,T]$. Let $E_1$ and $E_2$ be defined by (\ref{defE1}) and (\ref{defE2}) respectively. Assume further that $\lvert\omega_0\rvert >0$ in $\mathbb{T}\times(0,1)$. Then, we have:

\begin{align}
&\left\Vert \frac{1}{\omega_y(t)}\right\Vert_{L^{\infty}(\mathbb{T}\times (0,1))}\leq \left\Vert \frac{1}{\partial_y\omega_0}\right\Vert_{L^{\infty}(\mathbb{T}\times (0,1))}\exp(M(t)),\text{\hspace{0.1cm} for $0\leq t\leq T$,}\label{continuationcriteria1}\\
&\left\Vert \omega_y(t)\right\Vert_{L^{\infty}(\mathbb{T}\times (0,1))}\leq \left\Vert \partial_y\omega_0\right\Vert_{L^{\infty}(\mathbb{T}\times (0,1))}\exp(M(t)),\text{\hspace{0.1cm} for $0\leq t\leq T$,}\label{continuationcriteria2}\\
& \left\Vert \frac{\omega_x(t)}{\sqrt{\omega_y(t)}}\right\Vert_{L^2(\mathbb{T}\times (0,1))}\leq  \left\Vert \frac{\partial_x\omega_0}{\sqrt{\partial_y\omega_0}}\right\Vert_{L^2(\mathbb{T}\times (0,1))}\exp\left(\frac{C(\omega_0)}{2}M(t)\right),\text{ for $0\leq t\leq T$,}\label{continuationcriteria3}\\
&E_1(T)\leq \left\Vert \frac{\partial_x\omega_0}{\sqrt{\partial_y\omega_0}}\right\Vert_{L^2(\mathbb{T}\times (0,1))}\Vert\omega_0\Vert_{\infty}\left\Vert\frac{1}{\partial_y\omega_0}\right\Vert_{L^{\infty}(\mathbb{T}\times (0,1))}\exp\left(\tilde{C}(\omega_0)M(T)\right),\label{continuationcriteria4}\\
&E_2(T)\leq \frac{9}{4}\left\Vert \frac{\partial_x\omega_0}{\sqrt{\partial_y\omega_0}}\right\Vert_{L^2(\mathbb{T}\times (0,1))}\Vert\omega_0\Vert_{\infty}^3\left\Vert\frac{1}{\partial_y\omega_0}\right\Vert_{L^{\infty}(\mathbb{T}\times (0,1))}\exp\left(\tilde{C}(\omega_0)M(T)\right),\label{continuationcriteria5}
\end{align}
where
\begin{equation}
M(t)=\int_{0}^{t}\left\lvert  \frac{\omega\omega_x}{\omega_y}-u_x \right\rvert_{L^{\infty}(\mathbb{T}\times (0,1))}d\tau,
\end{equation}
$C(\omega_0)=3+\frac{2\Vert \omega_0 \Vert_{\infty}}{\min \lvert\omega_0\rvert}$ and $\tilde{C}(\omega_0)=\frac{1}{2}+\frac{C(\omega_0)}{2}$.
\end{theorem}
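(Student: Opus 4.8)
The plan is to derive each of the five bounds in turn, exploiting the material-derivative identities from Lemmas \ref{materialderivative} and \ref{lemmamaterialomega} together with Proposition \ref{omegaxl2prop}. First I would establish \eqref{continuationcriteria1}: by \eqref{lemmaformulas1} we have $D_t\log(1/\omega_y)=\frac{\omega\omega_x}{\omega_y}-u_x$, so along each Lagrangian trajectory $X(t)$ we get $\log\frac{1}{\omega_y(X(t),t)}-\log\frac{1}{\partial_y\omega_0(X(0))}=\int_0^t\left(\frac{\omega\omega_x}{\omega_y}-u_x\right)(X(\tau),\tau)\,d\tau$, and bounding the right-hand side by $M(t)$ in absolute value and exponentiating yields $\frac{1}{\omega_y(X(t),t)}\le \left\Vert\frac{1}{\partial_y\omega_0}\right\Vert_\infty\exp(M(t))$; taking the supremum over trajectories gives \eqref{continuationcriteria1}, and the same computation with the opposite sign (using $D_t\log\omega_y=u_x-\frac{\omega\omega_x}{\omega_y}$) gives \eqref{continuationcriteria2}.

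Next, \eqref{continuationcriteria3} is almost immediate from Proposition \ref{omegaxl2prop}: inequality \eqref{omegaxl2bound} reads $\int\frac{\omega_x^2}{\omega_y}\,dxdy\big|_{t=T}\le \int\frac{|\partial_x\omega_0|^2}{\partial_y\omega_0}\,dxdy\cdot\exp\left(C(\omega_0)M(T)\right)$, and since $\left\Vert\frac{\omega_x}{\sqrt{\omega_y}}\right\Vert_{L^2}^2$ is exactly $\int\frac{\omega_x^2}{\omega_y}\,dxdy$, taking square roots produces \eqref{continuationcriteria3} with the factor $C(\omega_0)/2$ in the exponent. Here one must also extend \eqref{omegaxl2bound} to hold at every intermediate time $t\le T$, which is automatic since the hypotheses hold on all of $[0,T]$ and $M$ is monotone increasing.

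For \eqref{continuationcriteria4} I would start from the representation $E_1(T)=\int_{\mathbb{T}\times(0,1)}\frac{\omega\omega_x}{\omega_y}\,dxdy$ in \eqref{defE1}, write $\frac{\omega\omega_x}{\omega_y}=\omega\cdot\frac{\omega_x}{\sqrt{\omega_y}}\cdot\frac{1}{\sqrt{\omega_y}}$, and apply Cauchy--Schwarz to pull out $\left\Vert\frac{\omega_x}{\sqrt{\omega_y}}\right\Vert_{L^2}$ while estimating $|\omega|\le\Vert\omega_0\Vert_\infty$ and $\frac{1}{\sqrt{\omega_y}}\le\left\Vert\frac{1}{\omega_y}\right\Vert_\infty^{1/2}$. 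Substituting \eqref{continuationcriteria1} and \eqref{continuationcriteria3} then gives a bound with exponential factor $\exp\left(\frac{C(\omega_0)}{2}M(T)+\frac12 M(T)\right)=\exp\left(\tilde C(\omega_0)M(T)\right)$, which is \eqref{continuationcriteria4}. The estimate \eqref{continuationcriteria5} follows the same template applied to $E_2(T)=\int u^2\frac{\omega\omega_x}{\omega_y}\,dxdy$ from \eqref{defE2}, using in addition the $L^\infty$ bound $\Vert u\Vert_\infty\le\frac32\Vert\omega_0\Vert_\infty$ from \eqref{uinfty}, which contributes the factor $\left(\frac32\right)^2=\frac94$ and two extra powers of $\Vert\omega_0\Vert_\infty$.

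The genuinely delicate point is keeping the bookkeeping of the exponents and constants consistent: one must verify that the factor from \eqref{omegaxl2bound} is $\frac{C(\omega_0)}{2}$ after the square root (not $C(\omega_0)$), that the extra $\frac12 M(T)$ from the $\frac{1}{\sqrt{\omega_y}}$ factor combines correctly to give $\tilde C(\omega_0)=\frac12+\frac{C(\omega_0)}{2}$, and that the $\Vert u\Vert_\infty$ estimate is inserted without accidentally using the $H^4$ norm. The underlying analytic content --- the transport structure of $\log\omega_y$ and the Grönwall bound for $\int\omega_x^2/\omega_y$ --- is already contained in the earlier lemmas, so no further obstacle of a conceptual nature arises; the proof is a chain of Cauchy--Schwarz applications glued to Proposition \ref{omegaxl2prop} and \eqref{uinfty}.
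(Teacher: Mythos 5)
Your proposal is correct and follows essentially the same route as the paper: the Lagrangian integration of $D_t\log(1/\omega_y)$ for \eqref{continuationcriteria1}--\eqref{continuationcriteria2}, the square root of \eqref{omegaxl2bound} for \eqref{continuationcriteria3}, and a Cauchy--Schwarz splitting of $\int\frac{\lvert\omega\omega_x\rvert}{\omega_y}\,dxdy$ combined with \eqref{uinfty} for \eqref{continuationcriteria4}--\eqref{continuationcriteria5}. Your bookkeeping (the factor $\tfrac{C(\omega_0)}{2}$, the extra $\tfrac12 M$, and the power $\left\Vert\frac{1}{\partial_y\omega_0}\right\Vert_\infty^{1/2}$) matches the paper's own computation exactly.
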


\begin{proof}

Let $\alpha$ be any point in $\mathbb{T}\times (0,1)$. By (\ref{lemmaformulas1})

\begin{align}
\left\lvert\log(\omega_y(X(\alpha,\tau), Y(\alpha,\tau),\tau))\rvert_{\tau=0}^{\tau=t}\right\rvert&=\left\lvert\int_{0}^{t}\partial_{\tau}\log(\omega_y(X(\alpha,\tau), Y(\alpha,\tau),\tau))d\tau\right\rvert\nonumber\\
&=\left\lvert\int_{0}^{t}\left(\frac{\omega\omega_x}{\omega_y}-u_x\right)(X(\alpha,\tau), Y(\alpha,\tau),\tau)d\tau\right\rvert\nonumber\\
&\leq \int_{0}^{t}\left\lvert  \frac{\omega\omega_x}{\omega_y}-u_x \right\rvert_{L^{\infty}(\mathbb{T}\times (0,1))}d\tau,\nonumber
\end{align}
where $(X(\alpha,\tau), Y(\alpha,\tau))$ is the characteristic curve starting at $\alpha$. Thus, applying exponential we get (\ref{continuationcriteria1}) and (\ref{continuationcriteria2}). From (\ref{omegaxl2bound}) we get (\ref{continuationcriteria3}). By H\"older's inequality, (\ref{continuationcriteria1}) and (\ref{continuationcriteria3})
\begin{align}
\int_{\mathbb{T}\times (0,1))}\frac{\lvert\omega\omega_x\rvert}{\omega_y}dxdy&\leq \left\Vert \frac{\omega_x}{\sqrt{\omega_y}}\right\Vert_{L^2(\mathbb{T}\times (0,1))}\left\Vert \frac{\omega}{\sqrt{\omega_y}}\right\Vert_{L^2(\mathbb{T}\times (0,1))}\nonumber\\
&\leq \left\Vert\omega_0\right\Vert_{L^{\infty}(\mathbb{T}\times (0,1))}  \left\Vert\frac{1}{\partial_y\omega_0}\right\Vert_{L^{\infty}(\mathbb{T}\times (0,1))}^{\frac{1}{2}}\nonumber\\
&\times\exp\left( \frac{M(t)}{2}+\frac{C(\omega_0)M(t)}{2}\right)\left\Vert \frac{\partial_x\omega_0}{\sqrt{\partial_y\omega_0}}\right\Vert_{L^2(\mathbb{T}\times (0,1))},\nonumber\\
\end{align}
which yields (\ref{continuationcriteria4}) and (\ref{continuationcriteria5}) thanks to (\ref{uinfty}). This concludes the proof of Theorem \ref{continuationcriteria}.
\end{proof}
\section{Proof of Theorem \ref{blowup}: Finite time blow-up of the Semi-lagrangian equations}\label{proofblowup}

The aim of this section is to prove Theorem \ref{blowup}, which establishes the finite time blow-up of solutions to the semilagrangian equations (\ref{HSLE}) for certain class of initial data. The following proposition will be useful for proving Theorem \ref{blowup}.

\begin{proposition}\label{formulasSLE}
Let $(\vb{v},h_a)$ be a smooth solution to (\ref{HSLE}). Let $E_1$ and $E_2$ be defined by (\ref{defE1HSLE}) and (\ref{defE2HSLE}) respectively. Then, we have:

\begin{align}
E_1 &=\partial_t \int_{\mathbb{T}^d\times (0,1)}\log(h_a)h_adxda, \label{formulasSLE1} \\
\partial_t E_1&=\int_{\mathbb{T}^d\times (0,1)}\left\lvert\nabla \vb{v}\right\rvert^2h_adxda, \label{formulasSLE2} \\
\partial_t E_2&=\int_{\mathbb{T}^d\times (0,1)}\left\lvert\partial_t \vb{v}\right\rvert^2h_adxda. \label{formulasSLE3}
\end{align}

\end{proposition}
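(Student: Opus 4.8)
The plan is to prove a single transport identity for integrals against the measure $h_a\,dx\,da$, and then reduce each of the three claims to a pointwise computation of a material derivative, exactly as (\ref{formulas1})--(\ref{formulas3}) were obtained from Lemma \ref{lemmaformulas}. Throughout write $D_t=\partial_t+\vb{v}\cdot\nabla$ (no $\partial_a$, since $a$ is only a parameter in (\ref{HSLE})). The key observation is that for any smooth $f=f(x,t,a)$,
\[
\partial_t\int_{\mathbb{T}^d\times(0,1)}f\,h_a\,dxda=\int_{\mathbb{T}^d\times(0,1)}(D_tf)\,h_a\,dxda,
\]
which follows by differentiating under the integral sign, using the continuity equation $\partial_t h_a=-\nabla\cdot(\vb{v}h_a)$ and integrating by parts in $x$ (periodicity; there are no $a$-derivatives). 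In particular $D_t h_a=-(\nabla\cdot\vb{v})h_a$, hence $D_t\log h_a=-\nabla\cdot\vb{v}$, and applying the transport identity with $f=\log h_a$ gives (\ref{formulasSLE1}) at once: $\partial_t\int\log(h_a)h_a\,dxda=\int(-\nabla\cdot\vb{v})h_a\,dxda=E_1$.

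For (\ref{formulasSLE2}) I would write $\partial_t E_1=-\partial_t\int(\nabla\cdot\vb{v})h_a\,dxda=-\int D_t(\nabla\cdot\vb{v})\,h_a\,dxda$ by the transport identity. Taking the divergence of the momentum equation $\partial_t\vb{v}+\vb{v}\cdot\nabla\vb{v}+\nabla P=0$ and using the curl-free condition $\partial_i\varv_j=\partial_j\varv_i$ to write $\sum_i\partial_i(\varv_j\partial_j\varv_i)=\sum_{i,j}(\partial_i\varv_j)^2+\vb{v}\cdot\nabla(\nabla\cdot\vb{v})=|\nabla\vb{v}|^2+\vb{v}\cdot\nabla(\nabla\cdot\vb{v})$, one obtains the pointwise identity $D_t(\nabla\cdot\vb{v})=-|\nabla\vb{v}|^2-\Delta P$. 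Substituting, $\partial_t E_1=\int|\nabla\vb{v}|^2h_a\,dxda+\int\Delta P\,h_a\,dxda$, and the last integral vanishes: since $P$ is independent of $a$ and $\int_0^1 h_a\,da=1$, it equals $\int_{\mathbb{T}^d}\Delta P\,dx=0$ by periodicity.

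For (\ref{formulasSLE3}) I would use (\ref{burgersHSLE}) to write $\vb{v}_t=-\nabla\phi$ with $\phi=\tfrac12|\vb{v}|^2+P$, so that $\vb{v}\cdot\vb{v}_t=-\vb{v}\cdot\nabla\phi$, and apply the transport identity with $f=\vb{v}\cdot\vb{v}_t$. The computation of $D_t(\vb{v}\cdot\vb{v}_t)$ is the heart of the argument. Using $D_t\vb{v}=-\nabla P$ (advective form of the momentum equation), the commutator relation $D_t(\partial_i\phi)=\partial_i(D_t\phi)-(\partial_i\varv_j)\partial_j\phi$, the curl-free condition to turn $\varv_i(\partial_i\varv_j)\partial_j\phi$ into $\nabla(\tfrac12|\vb{v}|^2)\cdot\nabla\phi=(\nabla\phi-\nabla P)\cdot\nabla\phi$, and the simplification $D_t\phi=\partial_t\phi+\vb{v}\cdot\nabla\phi=(\vb{v}\cdot\vb{v}_t+P_t)-\vb{v}\cdot\vb{v}_t=P_t$, all the pressure cross-terms cancel and one is left with $D_t(\vb{v}\cdot\vb{v}_t)=|\vb{v}_t|^2-\vb{v}\cdot\nabla P_t$. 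Integrating against $h_a$, the remaining term is $\int_{\mathbb{T}^d}\nabla P_t\cdot\big(\int_0^1\vb{v}h_a\,da\big)\,dx$, which vanishes after one integration by parts because $\nabla\cdot\int_0^1\vb{v}h_a\,da=0$ (differentiate $\int_0^1 h_a\,da=1$ in time and use the continuity equation, cf. (\ref{divHSLE})). Hence $\partial_t E_2=\int|\partial_t\vb{v}|^2h_a\,dxda$.

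The routine-but-delicate part, and the main obstacle, is the bookkeeping in this last computation: one must repeatedly invoke the curl-free condition to convert convective terms $\varv_i\partial_i\varv_j$ into exact gradients and spot the cancellation $D_t\phi=P_t$, and a stray sign or factor there breaks the argument. The first two identities are comparatively immediate once the transport identity is in place.
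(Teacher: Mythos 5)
Your argument is correct, and all three identities come out with the right constants, but you organize the proof differently from the paper. The paper's proof is Eulerian: it differentiates $E_1$ and $E_2$ under the integral directly, uses the Bernoulli form $\vb{v}_t=-\nabla\bigl(\tfrac12\lvert\vb{v}\rvert^2+P\bigr)$ (respectively its time derivative $\vb{v}_{tt}=-\nabla(\vb{v}\cdot\vb{v}_t+P_t)$), and then integrates by parts in $x$, invoking the curl-free identity $\nabla(\nabla\cdot\vb{v})=\Delta\vb{v}$ to cancel the $\vb{v}\cdot\Delta\vb{v}$ terms for (\ref{formulasSLE2}) and cancelling the transport term $\vb{v}\cdot\vb{v}_t\,\nabla\cdot(\vb{v}h_a)$ for (\ref{formulasSLE3}). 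You instead prove one transport identity $\partial_t\int fh_a\,dxda=\int (D_tf)h_a\,dxda$ and reduce everything to the pointwise Lagrangian computations $D_t\log h_a=-\nabla\cdot\vb{v}$, $D_t(\nabla\cdot\vb{v})=-\lvert\nabla\vb{v}\rvert^2-\Delta P$ and $D_t(\vb{v}\cdot\vb{v}_t)=\lvert\vb{v}_t\rvert^2-\vb{v}\cdot\nabla P_t$, which I checked are correct (the curl-free condition enters exactly where you say, in symmetrizing $\sum_{i,j}(\partial_i\varv_j)(\partial_j\varv_i)$ and in converting $\varv_i\partial_i\varv_j$ into $\partial_j\tfrac12\lvert\vb{v}\rvert^2$, and the cancellation $D_t\phi=P_t$ is right). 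Both proofs ultimately hinge on the same three structural facts: the curl-free condition, $\int_0^1h_a\,da=1$ to eliminate $\int\Delta P\,h_a$, and $\nabla\cdot\int_0^1\vb{v}h_a\,da=0$ (i.e.\ (\ref{divHSLE})) to eliminate the $P_t$ term. What your route buys is a transparent parallel with the hydrostatic case — your three pointwise identities are the exact analogues of Lemma \ref{lemmaformulas}, and the transport identity plays the role of (\ref{materialint}) — whereas the paper's direct Eulerian computation avoids introducing the material derivative in the semilagrangian setting at the cost of slightly longer integrations by parts.
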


\begin{proof}

By (\ref{HSLE}) and the $x$-periodicity

\begin{align}
\partial_t  \int_{\mathbb{T}^d\times (0,1)}\log(h_a)h_adxda &= \int_{\mathbb{T}^d\times (0,1)}\frac{\partial_t h_a}{h_a}h_a+\log(h_a)\partial_t h_adxda \nonumber \\
&=\int_{\mathbb{T}^d\times (0,1)}-\nabla\cdot(\vb{v}h_a)-\log(h_a)\nabla\cdot (\vb{v}h_a)dxda \nonumber \\
&=\int_{\mathbb{T}^d\times (0,1)}\nabla(\log(h_a))\cdot (\vb{v}h_a)dxda \nonumber \\
&=\int_{\mathbb{T}^d\times (0,1)}\nabla(h_a)\cdot \vb{v}dxda \nonumber \\
&=\int_{\mathbb{T}^d\times (0,1)}(-\nabla\cdot \vb{v})h_adxda, \nonumber 
\end{align}
which proves (\ref{formulasSLE1}). Next, let us compute the time derivative of $E_1$:

\begin{align}
&\partial_t E_1(t) \nonumber \\
&=\int_{\mathbb{T}^d\times (0,1)}\nabla\cdot\nabla\left(\frac{\lvert \vb{v}\rvert^2}{2}+P\right)h_a+\nabla\cdot \vb{v}\nabla\cdot (\vb{v}h_a)dxda \nonumber \\
&=\int_{\mathbb{T}^d\times (0,1)}(\vb{v}\cdot\Delta \vb{v})h_a+(\nabla \vb{v}\cdot \nabla \vb{v})h_a+\nabla\cdot \vb{v}\nabla\cdot (\vb{v}h_a)dxda+\int_{\mathbb{T}^d}\Delta P \int_{0}^{1}h_adadx \nonumber \\
&=\int_{\mathbb{T}^d\times (0,1)} (\vb{v}\cdot\Delta \vb{v})h_a+\left\lvert\nabla \vb{v}\right\rvert^2h_a +\nabla\cdot((\nabla \cdot\vb{v})\vb{v}h_a)-\nabla(\nabla\cdot\vb{v})\cdot\vb{v}h_adxda\nonumber \\
&=\int_{\mathbb{T}^d\times (0,1)} (\vb{v}\cdot\Delta \vb{v})h_a+\left\lvert\nabla \vb{v}\right\rvert^2h_a-(\Delta \vb{v}\cdot \vb{v})h_adxda\nonumber \\
&=\int_{\mathbb{T}^d\times (0,1)}\left\lvert\nabla \vb{v}\right\rvert^2h_adxda, \nonumber
\end{align}
where we have used the identity $\nabla(\nabla\cdot \vb{v})=\Delta \vb{v}$, which follows by the curl free condition $\partial_i\varv_j=\partial_j\varv_i$. Thus, we proved (\ref{formulasSLE2}). Finally, let us compute the time derivative of $E_2$:

\begin{equation}\label{E_2HSLEt}
\partial_t E_2(t)=\int_{\mathbb{T}^d\times (0,1)}\vb{v}_t\cdot \vb{v}_th_a+\vb{v}\cdot \vb{v}_{tt}h_a-\vb{v}\cdot \vb{v}_t\nabla\cdot (\vb{v}h_a)dxda.
\end{equation}
The second term is equal to:

\begin{align}
\int_{\mathbb{T}^d\times (0,1)}\vb{v}\cdot \vb{v}_{tt}h_adxda&=-\int_{\mathbb{T}^d\times (0,1)}\vb{v}\cdot \nabla\left(\vb{v}\cdot \vb{v}_t+P_t\right)h_adxda \nonumber \\
&=\int_{\mathbb{T}^d}P_t\nabla\cdot\int_{0}^{1}\vb{v}h_adadx+\int_{\mathbb{T}^d\times (0,1)}\vb{v}\cdot \vb{v}_t\nabla\cdot (\vb{v}h_a)dxda\nonumber \\
&=\int_{\mathbb{T}^d\times (0,1)}\vb{v}\cdot \vb{v}_t\nabla\cdot (\vb{v}h_a)dxda, \nonumber
\end{align}
thanks to (\ref{divHSLE}), which leads to (\ref{formulasSLE3}). Thus the proof of Proposition \ref{formulasSLE} is now complete.
\end{proof}
Now we can prove Theorem \ref{blowup}.

\begin{proof}[Proof of Theorem \ref{blowup}]

First assume that (\ref{collapseE1SLE}) holds. Integrating in time and applying (\ref{formulasSLE1}), we get:

\begin{align}
d\log\left(\frac{d}{E_1(0)}\frac{1}{\frac{d}{E_1(0)}-t}\right)&=\int_{0}^{t}\frac{d}{\frac{d}{E_1(0)}-\tau}d\tau\nonumber\\
&\leq \int_{0}^{t}E_1(\tau)d\tau\nonumber\\
&=\int_{\mathbb{T}^d\times(0,1)}\log(h_a(\tau))h_a(\tau)dxda\rvert_{\tau=0}^{\tau=t},\nonumber
\end{align}
which implies (\ref{collapselogha}). Next, by (\ref{formulasSLE2}) and H\"older's inequality

\begin{align}
\partial_tE_1&=\int_{\mathbb{T}^d\times (0,1)}\lvert \nabla \vb{v}\rvert^2h_adxda \nonumber\\
&\geq \frac{1}{d}\int_{\mathbb{T}^d\times (0,1)}\lvert \nabla\cdot \vb{v}\rvert^2h_adxda\nonumber\\
&\geq \frac{1}{d}\left(\int_{\mathbb{T}^d\times (0,1)}\lvert \nabla\cdot \vb{v}\rvert h_adxda\right)^2\nonumber\\
&\geq\frac{1}{d} E_1^2.\label{growthE1HSLE}
\end{align}
Thus, Lemma \ref{blowuplemma}, (\ref{formulasSLE2}) and (\ref{growthE1HSLE}) lead to (\ref{collapseE1SLE}). Finally, by (\ref{formulasSLE3}) and H\"older's inequality
\begin{align}
\int_{\mathbb{T}^d\times (0,1)}\lvert \vb{v}\rvert^2h_adxda\cdot\partial_t E_2 &\geq \left(\int_{\mathbb{T}^d\times (0,1)}\lvert \vb{v}\rvert \lvert \vb{v}_t\rvert h_adxda\right)^2 \nonumber \\
&\geq \left(\int_{\mathbb{T}^d\times (0,1)}\vb{v}\cdot \vb{v}_th_adxda\right)^2 \nonumber \\
&=E_2^2.\label{growthE2HSLE}
\end{align}
Thus, Lemma \ref{blowuplemma}, (\ref{growthE2HSLE}), (\ref{formulasSLE3}) and (\ref{kineticHSLE}) lead to (\ref{collapseE2SLE}). This concludes the proof of Theorem \ref{blowup}.
\end{proof}
\section{Appendix. Integral lemma}

The aim of this appendix is to prove the integral identity conerning the integral of the logarithm of a function.

\begin{lemma}\label{loglemma}
Let $(\Omega, \mu)$ be a measure space and let $f\in L^1(\Omega)$ be a positive function in $\Omega$. Assume that $\mu (\Omega)=1$, then

\begin{equation}\label{loglemmaeq}
\exp\left(\int_{\Omega}\log(f)d\mu\right)=\lim_{p\rightarrow 0}\left(\int_{\Omega}f^pd\mu\right)^{\frac{1}{p}}.
\end{equation}

\end{lemma}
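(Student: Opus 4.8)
The plan is to interpret both sides as limits of the $L^p$-type averaging functional $p \mapsto \left(\int_\Omega f^p \, d\mu\right)^{1/p}$ as $p \to 0$, and to compute this limit by taking logarithms and applying L'H\^opital's rule (or, more carefully, a dominated-convergence argument on the difference quotient). Write $F(p) = \log\left(\int_\Omega f^p \, d\mu\right)$. Since $\mu(\Omega) = 1$, we have $F(0) = \log 1 = 0$, so
\begin{equation}
\frac{1}{p}\log\left(\int_\Omega f^p \, d\mu\right) = \frac{F(p) - F(0)}{p - 0},
\end{equation}
and the right-hand side of \eqref{loglemmaeq} is $\exp\left(\lim_{p\to 0} \frac{F(p)-F(0)}{p}\right) = \exp(F'(0))$, provided $F$ is differentiable at $0$. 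Formally differentiating under the integral sign gives $F'(p) = \left(\int_\Omega f^p \log f \, d\mu\right)\big/\left(\int_\Omega f^p \, d\mu\right)$, so $F'(0) = \int_\Omega \log f \, d\mu$, which is exactly the exponent on the left-hand side.

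The steps I would carry out, in order, are: (1) record that $\mu(\Omega)=1$ so the constant function $1$ has integral $1$ and $F(0)=0$; (2) justify that $p \mapsto \int_\Omega f^p \, d\mu$ is finite and differentiable in a neighborhood of $p=0$ with derivative obtained by differentiating under the integral --- this requires a local domination of $\partial_p(f^p) = f^p \log f$ by an $L^1(\mu)$ function uniformly for $p$ in a small interval around $0$; (3) conclude $F'(0) = \int_\Omega \log f \, d\mu$ by the quotient rule, noting the denominator equals $1$ at $p=0$; (4) assemble the difference-quotient identity above and pass to the limit; (5) apply $\exp$ to both sides. If one prefers to avoid differentiation-under-the-integral entirely, an alternative for step (2)--(4) is to write $\frac{1}{p}\log\int_\Omega f^p\,d\mu = \frac{1}{p}\log\left(1 + \int_\Omega (f^p - 1)\,d\mu\right)$, use $f^p - 1 = p\log f + O(p^2(\log f)^2 f^{p})$ pointwise together with $\log(1+x) = x + O(x^2)$, and control the error terms by dominated convergence.

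The main obstacle is the integrability/domination issue in step (2): $\log f$ need not be bounded, and $f^p \log f$ can fail to be dominated uniformly in $p$ without some hypothesis. The statement as given only assumes $f \in L^1(\Omega)$ and $f > 0$; strictly speaking one needs, e.g., $\log f \in L^1(\mu)$ (equivalently, $f^p \in L^1$ for $p$ in a two-sided neighborhood of $0$) for the right-hand limit and the left-hand integral to both make sense, and then the elementary bound $|f^p \log f| \le C_\delta\left(f^{\delta} + f^{-\delta}\right)$ for $|p| \le \delta/2$ (valid since $|\log f| \le \frac{1}{\delta}(f^\delta + f^{-\delta})$ up to a constant) supplies an $L^1(\mu)$ dominating function once one assumes $f^{\pm\delta} \in L^1(\mu)$. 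I would therefore either read the hypothesis as implicitly including this integrability (which is how the lemma is used, via $\mu = h_a\,dx\,da$ with $h_a$ bounded above and below on the region where the solution is smooth), or state the needed condition explicitly; modulo that, the remaining estimates are routine applications of dominated convergence and the first-order Taylor expansions of $t \mapsto e^t$ and $t \mapsto \log(1+t)$.
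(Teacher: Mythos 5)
Your route (write $F(p)=\log\int_\Omega f^p\,d\mu$, note $F(0)=0$, and identify $\lim_{p\to 0}\frac1p F(p)$ with $F'(0)=\int_\Omega\log f\,d\mu$ via differentiation under the integral, or equivalently via the Taylor expansions of $t\mapsto t^p$ and $\log(1+x)$ with dominated convergence) is genuinely different from the paper's, and as you yourself point out it only goes through under an extra integrability hypothesis: you need $f^p\log f$ dominated uniformly for small $p$, which in your setup amounts to $\log f\in L^1(\mu)$ (your bound $\lvert f^p\log f\rvert\le C_\delta(f^\delta+f^{-\delta})$ even asks for $f^{-\delta}\in L^1$, which is stronger). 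That hypothesis is not in the statement, and --- contrary to your reading --- it is not needed for the statement to make sense or to be true: since $\log^+ f\le f\in L^1(\mu)$, the integral $\int_\Omega\log f\,d\mu$ is always well defined in $[-\infty,\infty)$, and the identity holds with the convention $\exp(-\infty)=0$ (the limit being taken over $p\to 0^+$, which is how the lemma is used in Corollary \ref{blowup2}). So as written your argument proves a weaker lemma; the case $\int_\Omega\log f\,d\mu=-\infty$, which the hypotheses allow, is exactly where L'H\^opital/differentiation under the integral breaks down.

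The paper's proof (following Grafakos) avoids the domination issue entirely and covers that degenerate case: Jensen's inequality gives $\exp\left(\int_\Omega\log f\,d\mu\right)\le\left(\int_\Omega f^p\,d\mu\right)^{1/p}$ for every $p>0$, hence a lower bound for the $\liminf$; for the upper bound one uses $\log t\le t-1$ to get $\left(\int_\Omega f^{p}\,d\mu\right)^{1/p}\le\exp\left(\frac1p\int_\Omega(f^{p}-1)\,d\mu\right)$, and then the monotone decrease $\frac1q(t^q-1)\searrow\log t$ as $q\searrow 0$ together with the monotone convergence theorem (applied to the increasing differences $\frac1{q_0}(f^{q_0}-1)-\frac1{q_n}(f^{q_n}-1)$) gives $\frac1{q_n}\int_\Omega(f^{q_n}-1)\,d\mu\searrow\int_\Omega\log f\,d\mu$, even when this limit is $-\infty$. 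Your approach is fine, and indeed sufficient for the application in the paper (there $\mu=h_a\,dx\,da$ with $h_a$ smooth, bounded above and below, so $\log h_a$ is trivially integrable), but to prove the lemma as stated you should either replace the differentiation step by the monotonicity argument above, or at least note that for $p\in(0,1]$ one has $\log f\le\frac{f^p-1}{p}\le f-1$, so that dominated convergence applies as soon as $\log f\in L^1(\mu)$, and treat the case $\int_\Omega\log f\,d\mu=-\infty$ separately.
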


\begin{proof}
Let us proceed as in \cite{Gr14}, let $p_n$ be any positive sequence converging to $0$.
By Jensen's inequality,

$$ \exp\left(\int_{\Omega}\log(f)d\mu\right)\leq \left(\int_{\Omega}f^{p}d\mu\right)^{\frac{1}{p}}, \text{\hspace{0.1cm} $\forall p>0$},$$
which implies
\begin{equation}\label{bound1loglemma}
\exp\left(\int_{\Omega}\log(f)d\mu\right)\leq \liminf\left(\int_{\Omega}f^{p_n}d\mu\right)^{\frac{1}{p_n}}.
\end{equation}
Since $\log t\leq t-1$, for $t>0$, we conclude that
\begin{equation}\label{ineq1loglemma}
 \log \left( \int_{\Omega}f^{p_n}d\mu\right) \leq \int_{\Omega}(f^{p_n}-1)d\mu.
 \end{equation}
Multiplying (\ref{ineq1loglemma}) by $\frac{1}{p_n}$ and applying exponential, we get
$$ \left( \int_{\Omega}f^{p_n}d\mu\right)^{\frac{1}{p_n}}\leq \exp\left(  \frac{1}{p_n}\int_{\Omega}\left(f^{p_n}-1\right)d\mu\right),$$
which implies
\begin{equation}\label{bound2loglemma}
\limsup \left( \int_{\Omega}f^{p_n}d\mu\right)^{\frac{1}{p_n}}\leq  \limsup \exp\left(  \frac{1}{p_n}\int_{\Omega}\left(f^{p_n}-1\right)d\mu\right).
\end{equation}
Now, let us compute $ \limsup \exp\left(  \frac{1}{p_n}\int_{\Omega}\left(f^{p_n}-1\right)d\mu\right)$. Assume that $q_n\searrow 0$ , then
\begin{equation}\label{convergenceloglemma}
\frac{1}{q_n}\left(t^{q_n}-1\right)\searrow \log(t), \text{\hspace{0.1cm} $\forall t>0$ }.
\end{equation}
Let $h_n:\Omega\rightarrow \mathbb{R}$ be a sequence of positive functions defined by

$$ h_n(x)=\frac{1}{q_0}\left(f^{q_0}-1\right)-\frac{1}{q_n}\left(f^{q_n}-1\right).$$
By (\ref{convergenceloglemma}),
$$h_n(x)\nearrow \frac{1}{q_0}\left(f^{q_0}-1\right)-\log(f).$$
Applying Lebesgue's monotone convergence theorem, we get

$$ \int_{\Omega}h_nd\mu\nearrow \int_{\Omega}\frac{1}{q_0}\left(f^{q_0}-1\right)-\log(f) d\mu.$$
Thus, 
$$\int_{\Omega}\frac{1}{q_n}\left(f^{q_n}-1\right)d\mu \searrow \int_{\Omega}\log(f)d\mu.$$
Therefore,

\begin{equation}\label{limitloglemmaeq}
 \limsup \exp\left(  \frac{1}{p_n}\int_{\Omega}\left(f^{p_n}-1\right)d\mu\right)=\exp\left( \int_{\Omega}\log(f)d\mu\right).
\end{equation}
Finally, by (\ref{bound1loglemma}), (\ref{bound2loglemma}) and (\ref{limitloglemmaeq})

$$ \limsup \left(\int_{\Omega}f^{p_n}d\mu\right)^{\frac{1}{p_n}}\leq\exp\left(\int_{\Omega}\log(f)d\mu\right)\leq \liminf\left(\int_{\Omega}f^{p_n}d\mu\right)^{\frac{1}{p_n}}.$$
This concludes the proof of Lemma \ref{loglemma}.

\end{proof}

\section*{Acknowledgements}

The author thanks Yann Brenier for many interesting discussions (in particular for mentioning the possible extension to higher dimensions of the blow-up result) and for his hospitality during the author's stay at Laboratoire de Math\'ematiques d'Orsay.

This work is part of the grant SEV-2015-0554-17-4 funded by: MCIN/AEI/ 10.13039/501100011033.

\bibliography{biblio} \bibliographystyle{alpha}

\end{document}